\numberwithin{equation}{section}
\newcommand\mergedsub[2]{#1\sc@sub{#2}}
\newcommand\newsubcommand[3]{\newcommand#1{\mergedsub{#2}{#3}}}
\def\sc@sub#1{\def\sc@thesub{#1}\@ifnextchar_{\sc@mergesubs}{_{\sc@thesub}}}
\def\sc@mergesubs_#1{_{\sc@thesub#1}}
\newcommand\mergedsubC[2]{#1\sc@subC{#2}}
\def\sc@subC#1{\def\sc@thesub{#1}\@ifnextchar_{\sc@mergesubsC}{_{\sc@thesub}}}
\def\sc@mergesubsC_#1{_{#1,\sc@thesub}}
\newcommand\mergedsup[2]{#1\sc@sup{#2}}
\def\sc@sup#1{\def\sc@thesup{#1}\@ifnextchar^{\sc@mergesups}{^{\sc@thesup}}}
\def\sc@mergesups^#1{^{\sc@thesup,#1}}
\newcommand\mergedsupC[2]{#1\sc@supC{#2}}
\def\sc@supC#1{\def\sc@thesup{#1}\@ifnextchar^{\sc@mergesupsC}{^{\sc@thesup}}}
\def\sc@mergesupsC^#1{^{#1,\sc@thesup}}
\newcommand\mergedpar[2]{#1\sc@parC{#2}}
\def\sc@parC#1{\def\sc@thepar{#1}\@ifnextchar_{\sc@mergeparsub}{\@ifnextchar'{\sc@mergeparprime}{(\sc@thepar)}}}
\def\sc@mergeparsub_#1{_{#1}(\sc@thepar)}
\def\sc@mergeparprime#1{#1(\sc@thepar)}
\newtheorem{thm}{Theorem}[section]
\newtheorem{lem}[thm]{Lemma}
\newtheorem{cor}[thm]{Corollary}
\newtheorem{prop}[thm]{Proposition}
\newtheorem{rem}[thm]{Remark}
\newcommand\cB{{\mathcal B}}
\newcommand\cC{{\mathcal C}}
\newcommand\cF{{\mathcal F}}
\newcommand\cG{{\mathcal G}}
\newcommand\cH{{\mathcal H}}
\newcommand\cI{{\mathcal I}}
\newcommand\cO{{\mathcal O}}
\newcommand\cP{{\mathcal P}}
\newcommand\cM{{\mathcal M}}
\newcommand\cU{{\mathcal U}}
\newcommand\bE{{\mathbb E}}
\newcommand\bF{{\mathbb F}}
\newcommand\bG{{\mathbb G}}
\newcommand\bL{{\mathbb L}}
\newcommand\bN{{\mathbb N}}
\newcommand\bP{{\mathbb P}}
\newcommand\bR{{\mathbb R}}
\newcommand\bT{{\mathbb T}}
\newcommand\bZ{{\mathbb Z}}
\newcommand\fkL{{\mathfrak{L}}}
\newcommand\ve{\varepsilon}
\newcommand\vf{\varphi}
\newcommand\nc[1]{_{\cC^{#1}}}
\newcommand\bRp{\bR_+}
\newcommand\intr{\textup{int}\,}
\newcommand{\tE}{{\tilde E}}
\newcommand{\tF}{{\tilde F}}
\newcommand{\tM}{{\tilde M}}
\newcommand{\tz}{{\tilde z}}
\setlist[enumerate,1]{label=\textup{(\alph*)}}
\newcommand{\efrac}[2]{#1/#2} 
\newcommand\Id{{\mathds{1}}}
\newcommand\stable{{s}}  
\newcommand{\drifte}{v_\ve}
\newcommand{\Const}{{C_\#}}
\newcommand{\const}{{c_\#}}
\newcommand{\vei}{\ve^{-1}}
\newcommand{\veh}{\ve^{1/2}}
\newcommand{\deh}{\textup{d}}
\newcommand{\st}{\,:\,}
\renewcommand{\ln}{\textup{log}\,}
\newcommand{\dist}{\textup{dist}}
\newcommand{\supp}{\textup{supp}\,}
\newcommand{\pint}[1]{{\lfloor#1\rfloor}}
\newsubcommand{\npiA}{\boldsymbol\pi}{\alphaset}
\newcommand{\alphaset}{\mathcal{A}}
\newcommand{\fpath}{h}
\newcommand{\ho}{\hat\omega}
\newcommand{\thetasl}{\theta^*_\ell}
\newcommand{\zeroes}{{\boldsymbol z}}
\newcommand{\Cn}{\textup{A}}
 \newsubcommand{\TCn}{T}{\Cn}
\newcommand{\Leb}{\textup{Leb}}
\newcommand{\fm}{p}
\newcommand{\expo}[1]{\exp(#1)}
\newcommand\shiftPar{\kappa}
\newcommand\deviation{\Delta}
\newcommand\Var{\boldsymbol{\upsigma}} 
\newcommand\bVar{\boldsymbol{\hat\upsigma}}
\newcommand\Beta{\boldsymbol{\eta}}
\newcommand\slo{z}
\definecolor{reviewColor}{RGB}{255,172,255}
\newcommand\numToPrime[1]{%
  \ifnum0=#1\relax%
  \else%
  \ifnum1=#1\relax%
  '\else%
  \ifnum2=#1\relax%
  ''\else%
  \ifnum3p=#1\relax%
  '''\else%
  ^{(#1)}%
  \fi\fi\fi\fi} 
\newcommand\dtt[1]{D_1\numToPrime{#1}}
\newcommand{\spc}[1]{c_#1}
\newcommand\dttrho[1]{D_2\numToPrime{#1}}
\newcommand{\stdf}{\fkL}
\newcommand{\stdpSet}[1]{L_{#1}}
\newcommand{\stdfSet}[2]{\bL_{#1}\ifx&#2&\else(#2)\fi}
\newcommand{\privateFell}{\ell}
\newcommand{\fell}{\privateFell}
\newcommand{\fellf}[1]{\mergedpar{\privateFell}{#1}}
\newcommand{\vol}{\textup{vol}\,}
\newcommand{\Jac}{\textup{Jac}\,}
\newcommand\sW{{\mathscr W}}
\begin{document}
\title[Fast-slow  partially hyperbolic systems]{Fast-slow partially hyperbolic systems versus Freidlin--Wentzell random systems}
\author[De Simoi]{Jacopo de Simoi}
\address{Jacopo De Simoi\\
  Department of Mathematics\\
  University of Toronto\\
  40 St George St. Toronto, ON M5S 2E4}
\email{{\tt jacopods@math.utoronto.ca}}
\urladdr{\href{http://www.math.utoronto.ca/jacopods/}{http://www.math.utoronto.ca/jacopods}}
\author[Liverani]{Carlangelo Liverani}
\address{Carlangelo Liverani\\
  Dipartimento di Matematica\\
  II Universit\`{a} di Roma (Tor Vergata)\\
  Via della Ricerca Scientifica, 00133 Roma, Italy.}
\email{{\tt liverani@mat.uniroma2.it}}
\urladdr{\href{http://www.mat.uniroma2.it/~liverani/}{http://www.mat.uniroma2.it/~liverani}}
\author[Poquet]{Christophe Poquet}
\address{Christophe Poquet\\
  Universit\'e de Lyon, Universit\'e Lyon 1, Institut Camille Jordan, UMR 5208\\
  43 boulevard du 11 novembre 1918\\
  F-69622 Villeurbanne, France}
  \email{{\tt poquet@math.univ-lyon1.fr}}
  \urladdr{\href{http://math.univ-lyon1.fr/~poquet/}{http://math.univ-lyon1.fr/~poquet/}}
\author[Volk]{Denis Volk}
\address{Denis Volk\\
  Centre for Cognition and Decision making, National Research University Higher School of Economics, Russian Federation}
\email{{\tt dvolk@hse.ru}}
\urladdr{\href{http://tinyurl.com/DenisVolk}{http://tinyurl.com/DenisVolk}}

\thanks{This work has been supported by the European Advanced Grant Macroscopic Laws and Dynamical Systems (MALADY)
  (ERC AdG 246953). D.V. has been partially funded by the Russian Academic Excellence Project '5-100'}

\begin{abstract}
We consider a simple class of fast-slow partially hyperbolic dynamical systems and show that the (properly rescaled) behaviour of the slow variable is very close to a Freidlin--Wentzell type random system for times that are rather long, but much shorter than the metastability scale.  Also, we show the possibility of a ``sink" with all the Lyapunov exponents positive, a phenomenon that turns out to be related to the lack of absolutely continuity of the central foliation.
\end{abstract}

\keywords{Averaging, metastability, partially hyperbolic, decay of correlations}
\subjclass[2000]{37A25, 37C30, 37D30, 37A50, 60F17}

\maketitle
\section{Introduction}
In \cite{DeL0, DeL1,DeL2} the first two authors studied the following class of partially hyperbolic systems of the fast-slow type on $\bT^2$
\begin{equation}\label{eq:f-eps}
F_\ve(x,\theta)=(f(x,\theta),\theta+\ve\omega(x,\theta))\mod 1,
\end{equation}
with $\ve>0$, small, $F_\ve\in\cC^5(\bT^2,\bT^2)$, and $\inf_{x,\theta}\partial_x f(x,\theta)\geq \lambda>1$, $\|\omega\|_{\cC^4}=1$.\footnote{ In fact in such papers it was assumed only $F_\ve\in\cC^4(\bT^2,\bT^2)$, here we need a bit more regularity.}
As usual it is important to specify the type of initial conditions under which we like to study the dynamical systems $(\bT^2,F_\ve)$. It is well known that, in order to be able to obtain meaningful results for long times, they must be random. More precisely, if we define $(x_n,\theta_n)=F_\ve^n(x_0,\theta_0)$, then we would like to consider, at least, the initial condition $\theta_0\in\bT^1$ fixed, while $x_0\in\bT^1$ is distributed according to a probability measure with smooth density w.r.t. Lebesgue. Then $(x_n,\theta_n)$ can be viewed as a (Markov) random process.\footnote{ Admittedly a rather degenerate Markov process as the transition kernel is singular.}

We refer to the introductions of the above mentioned papers for a lengthy discussion of the relevance of such systems, the connection with averaging, homogenisation theory, metastability and statistical mechanics as well as for a discussion of the related literature.

Even though \eqref{eq:f-eps} is arguably the simplest possible model problem for a fast--slow partially hyperbolic system, its exact properties are not understood in full generality. If we want to develop a general theory for fast-slow partially hyperbolic systems, it is then important to see where do we stand and what are the open problems for the above basic model.

Probably the most striking fact concerning the dynamical systems $(\bT^2, F_\ve)$, and more generally fast-slow systems, is that they have many different relevant time scales. More precisely there exists some parameters $\alpha_0, c_0, c_1>0$:
\begin{itemize}
\item{\bf Initial times:} If $n< c_1\ln\ve^{-1}$, then the time is so short that the statistical property plays no
  significant role and one can, in principle, compute the trajectory numerically with arbitrary precision starting from
  a deterministic initial condition.
\item{\bf Short times:} If $c_1\ln\ve^{-1}<n< \ve^{-1+\alpha_0}$, then the $x$ variable is, essentially, distributed according to the invariant measure of $f(\cdot,\theta_0)$ while $|\theta_n-\theta_0|\leq \ve^{\alpha_0}$. Thus $\theta$ appears to be almost a constant of motion.
\item{\bf Long times:} If $\ve^{-1+\alpha_0}< n< \ve^{-1-\alpha_0}$, the evolution of the variable $\theta_{\ve^{-1}t}$ is close (in a precise technical sense) to a random process described by a stochastic differential equation.
\item{\bf Very long times:} If $\ve^{-1-\alpha_0}< n< e^{c_0\ve^{-1}}$, the system may behave like if several invariant SRB measures exist ({\em metastable state}).
\item{\bf Arbitrarily long times:} If  $n> e^{c_0\ve^{-1}}$, finally the system exhibits its true statistical properties (SRB measures, decay of correlations, etc.).
\end{itemize}

The first regime poses interesting problems in the fields of numerical analysis, but we will not discuss them here. The
second regime can be studied by applying standard results on the decay of correlations and we will not discuss it either,
as it can be seen as a special case of the third. The third regime is the one on which most of this paper will focus. We
will see that a precise understanding of this regime gives relevant informations also for longer times. The only other
discussions involving the behaviour of the system for longer times will be our discussion of the central foliation, that
obviously contains informations on the infinite time dynamics, although only of a very local nature. As for the last regime, we will only mention briefly the standing open problems.

Let us discuss the last three regimes in a bit more detail.
\subsection{Long Times}\ \newline
In this regime it is useful to rescale time, so we define
\begin{equation}\label{eq:thetaproc}
\theta_\ve(t)=\theta_{\lfloor \ve^{-1} t\rfloor}+(\ve^{-1}t-\lfloor \ve^{-1} t\rfloor)(\theta_{\lfloor \ve^{-1} t\rfloor+1}-\theta_{\lfloor \ve^{-1} t\rfloor}).
\end{equation}
Also it is convenient to see $\theta_\ve$ as a random variable in $\cC^0(\bRp,\bT)$.  In the paper \cite{DeL1} it is
shown that, in any interval $[0,T]$, $\theta_\ve$ converges weakly as $\ve\to 0$ to the solution of
\begin{align}\label{eq:averageeq}
  \frac{\deh\bar\theta}{\deh t}&=\bar\omega(\bar\theta)&
  \bar\theta(0)&=\theta_0,
\end{align}
where $\bar\omega(\theta)=\mu_\theta(\omega(\cdot,\theta))$, $\mu_\theta$ being the unique SRB measure of $f(\cdot,\theta)$, see \cite{Babook}.  For future use, let us also define the
function $\ho(x,\theta)=\omega(x,\theta)-\bar\omega(\theta)$.  Note that, by the
differentiability of $\mu_\theta$ with respect to $\theta$ (see~\cite[Section 8]{GL06}) we have that $\bar\omega\in\cC^{4-\alpha}$, for each $\alpha>0$.
Thus \eqref{eq:averageeq} is a well defined differential equation. Also \cite{DeL0} contains a results on the fluctuations: that is, if we define $\zeta_\ve(t)=\ve^{-\efrac 12}(\theta_\ve(t)-\bar\theta(t))$, then, in any time interval $[0,T]$, $\zeta_\ve$ converges to $\zeta$, defined by
\begin{equation}\label{eq:diffusion0}
\begin{split}
&d\zeta=\bar\omega'(\bar\theta)\zeta dt +\bVar(\bar \theta)dB\\
&\zeta(0)=0,
\end{split}
\end{equation}
where $B$ is the standard $1$-dimensional Brownian motion and the diffusion coefficient $\bVar$ is given
by the Green-Kubo formula
\begin{equation}\label{e_definitionBarChi}
\begin{split}
  \bVar(\theta)^2 =& \mu_\theta\left(\hat\omega(\cdot,\theta)\hat\omega(\cdot,\theta)\right)+ 2\sum_{m=1}^{\infty}
   \mu_\theta\left( \hat\omega(f_\theta^m(\cdot),\theta)\hat\omega(\cdot,\theta)\right),
\end{split}
\end{equation}
where we have used the notation $f_\theta(x)=f(x,\theta)$.  In addition, $\bVar(\theta)$ is differentiable (see~\cite[Section 8]{GL06} again) and it is strictly positive, unless $\hat\omega(\theta, \cdot)$ is a coboundary for $f_\theta$, see \cite{liverani-clt}. Thus, from now on we will assume:
\begin{enumerate}[label=\textup{(A\arabic*)},ref=(A\arabic*),resume]
\item\label{a_noCobo} for each $\theta\in\bT$, the function $\omega(\cdot,\theta)$ is not
  cohomologous to a constant function with respect to $f_\theta$.
\end{enumerate}

However, a much sharper result is proven in \cite{DeL1}: a local limit theorem with error, see Theorem  \ref{thm:lclt}  for details.

Here we go one step further and we prove that $\theta_\ve$ is very close (in a technical sense to be specified later) to the following Freidlin-Wentzell type process for times of order $\ve^{-\alpha}$, for some $\alpha>0$,
\begin{equation}\label{eq:f-w}
\begin{split}
&\deh\Beta(t)=\bar\omega(\Beta(t))\deh t+\sqrt \ve\bVar(\Beta(t))\deh B\\
&\Beta(0)=\theta_0.
\end{split}
\end{equation}
The above equation has been extensively studied, starting with \cite{WeFr}, and is know to exhibit metastable states. Before considering longer times, it is convenient to understand very precisely the behaviour of \eqref{eq:f-w} in the present regime. To our surprise, we were not able to locate the needed results in the literature, so we provide them here. This will allow us to obtain a very precise description of our system in this time scale.

Note that such a result can be used to considerably simplify various arguments in \cite{DeL2}.
\subsection{Very Long Times}\ \newline
Up to now the only condition on $\bar \omega$ was that it is not a coboundary. It turns out that precise results in the very long time scale are known only in certain cases, namely when $\bar \omega$ has zeroes. This is due to the fact that, in such a case, equation \eqref{eq:averageeq} has, generically, attractive fixed points and then the dynamics tends to be localised. On the contrary, if $\bar \omega$ has no zeroes, then the average dynamics is, essentially, a rotation and its statistical properties need a longer time scale to manifest (see Lemma \ref{lem:conver-no-zero}). Thus let us assume
\begin{enumerate}[label=\textup{(A\arabic*)},,ref=(A\arabic*),resume]
  \item\label{a_discreteZeros} $\bar\omega$ has a non-empty discrete set of non-degenerate zeros.
\end{enumerate}
Note that the above is a generic condition, once the zeroes do exist. The previous mentioned results imply then that the dynamics spends most of the time in a $\sqrt\ve$ neighbourhood of the attractive fixed points of \eqref{eq:averageeq}, see Proposition \ref{prop: close to gaussians}. Indeed, the large deviation results of \cite{DeL1} imply that the probability of escaping one such sink, if possible at all, is exponentially small in $\vei$. This means that it will be necessary an exponentially long time for the distributions of the $\theta$ variable to change appreciably. That is, there are quasi stationary states (metastability).
The occurrence of metastable states for pure deterministic systems was first found, in a different but similar context, by Kifer \cite{Kifer09}, in which it is shown that the system visits the different metastable states essentially following a Markov chain. However, the results there do not suffice to investigate the true invariant measures of the system.

On the contrary, our estimates show that, if we consider any accumulation point of $\frac 1n\sum_{k=0}^{n-1} F^k_{\ve,*}\nu_0$, where $\nu_0$ is one of our initial probability distribution,\footnote{ $F_*$ stands for the pushforward, namely $F_*\mu(\vf)=\mu(\vf\circ F)$.} then such points must be very close to a convex combinations of the metastable states. As all the possible {\em physical} measures of the systems must belong to such accumulation set,\footnote{ Recall that $\mu$ is a physical measure if, for all continuous $g$, $\lim_{n\to\infty}\frac 1n\sum_{k=0}^{n-1}g\circ F_\ve^k(x)=\mu(g)$ for $x$ belonging to a set of positive Lebesgue measure.} see \cite[Lemma 9.8]{DeL2}; this provides detailed informations on the possible structure of the physical measures. In turn, this also allows to compute the Lyapunov exponents of the system.

Of course, the variable $x$ undergoes uniform expansion, hence one Lyapunov exponent is trivially positive. The other is the Lyapunov exponent in the central direction (the direction associated to the slow variable $\theta$).
Indeed, in Section~\ref{sec:central-fol} we will see that the maps $F_\ve$, and hence $F_0$, have an invariant center
foliation. Let $(s_*(x,\theta),1)$ be the vectors defining the center distribution of $F_0$. Such a distribution is known to be close to the one of $F_\ve$.  Let us denote with $\psi_*$
the directional derivative of $\omega$ in the center direction, or, more precisely, with
respect to the vector $(s_*(x,\theta),1)$, that is:
\begin{align}\label{e_definitionPsi}
  \psi_*(x,\theta) = \partial_x \omega(x,\theta) s_*(x,\theta) + \partial_\theta\omega(x,\theta).
\end{align}
It is convenient to define also the average $\bar \psi_*(\theta)=\mu_\theta(\psi_*(\cdot,\theta))$.
Essentially, $1+\ve\psi_* $ is the one step-contraction (or expansion) in the center direction.
Then, for each invariant measure $\mu$,  $\mu(\ln (1+\psi_*))$ is the central Lyapunov exponent associated to such a measure.

Naively, one could think that such an exponent is always negative, as it is driven by the contraction in the sinks in which the dynamics spends most of the time. Surprisingly, this is not always the case. This was  already conjectured in \cite{DeL2} and is proven here. In addition, we show that the presence of a positive Lyapunov exponent in the central foliation is associated to the foliation non being absolutely continuous. Such a pathology of the central foliation was already discovered in other examples, e.g. volume preserving partially hyperbolic maps \cite{SW, RW}, but here emerges in a totally natural and robust manner for systems whose invariant measure is not previously known and it is not constant.

\subsection{Arbitrarily Long Times}
In this regime, the system exhibits its true statistical properties, the first being the existence or not of physical measures.
They are proven to exist in the case of mostly expanding central direction~\cite{ALP05}, and mostly contracting central
direction~\cite{Do00, deCastro02}.  For central direction with zero Lyapunov
exponents (or close to zero) physical measures are known to exist generically \cite{Tsujii05}.
In the case of mostly expanding and mostly contracting central foliations the above papers contain also some information on uniqueness and mixing of the physical measure, but not of a quantitive nature.

Precise quantitive results beyond what already discussed are known only when the central Lyapunov exponent is negative.
More precisely, calling $\{\theta_{k,-}\}_{k=1}^\zeroes$ the zeroes of $\bar \omega$ such that $\bar\omega'(\theta_{k,-})<0$, the condition
\begin{enumerate}[label=\textup{(A\arabic*)},,ref=(A\arabic*),start = 2]
  \item\label{a_almostTrivial} $\max_{k\in\{1,\cdots,\zeroes\}}\bar \psi_*(\theta_{k,-}) <0$.
\end{enumerate}
implies that the center Lyapunov exponent is Lebesgue-a.s. negative (see \cite{DeL2} or Section \ref{sec:lyap}).
Under such a condition \cite{DeL2} obtained a full classification of the SRB measures and sharp estimates of the decay of correlations.

Numerical simulations and the results in \cite{ ABV00,ALP05,Gou06, Tsujii05} suggest that similar type of results should hold in much greater generality (in particular for positive Lyapunov or zero exponents).
Yet, to obtain similar results in the case of non negative Lyapunov exponent, or for the case in which $\bar\omega$ has no zeroes, stands as an important open challenge.

\tableofcontents

\section{Limit theorems: a recap}\label{ss_limitThms}
Standard pairs are a very convenient way to describe initial conditions which can be realised by interacting with a system in the past. The reader can find a basic introduction to standard pairs in \cite{DeL0} and a more precise description of the type of standard pairs needed in the present context in \cite{DeL1, DeL2}. Here we content ourselves with a super brief introduction, just to establish notations.

Let us fix a small $\delta>0$, and $\dtt0, \dtt1, \spc1>0$ large enough. Let us define the set of functions
\begin{align*}
  \Sigma_{\spc1}=\{G\in \cC^3([a,b], \bT)\st&
  a,b\in\bT, b-a\in[\delta/2,\delta],\\&
  \|G'\|\leq \ve \spc1,\, \|G''\|\leq \ve \dtt0 \spc1,\,\|G'''\|\leq \ve \dtt1 \spc1\}.
\end{align*}
We associate to any $G\in \Sigma_{\spc1}$ the map $\bG(x)=(x,G(x))$; the graph of any
such $G$ (i.e. the image of $\bG$) will be called a \emph{standard curve}.

Next, fix $\dttrho0, \spc2>0$ large enough.  We define the set of $\spc2$-\emph{standard} probability densities on the standard curve
$G$ as
\begin{align*}
  D_{\spc2}(G)=\left\{\rho\in \cC^2([a,b],\bRp)\st \int_a^b\rho(x)\deh x=1,\
\left\|\frac{\rho'}{\rho}\right\|\leq
  c_2,\,\left\|\frac{\rho''}{\rho}\right\|\leq \dttrho0\spc2\right\}.
\end{align*}
A {\em standard pair} $\ell$ is given by $\ell=(\bG,\rho)$, where
$G\in\Sigma_{\spc1}$ and $\rho\in D_{\spc2}(G)$.  To any standard pair
$\ell=(\bG,\rho)$ is uniquely associated a probability measure $\mu_\ell$ on $\bT^2$ defined
as follows: for any Borel-measurable function $g$ on $\bT^2$ let
\[
\mu_\ell(g):=\int_a^{b} g(\bG(x))\rho(x) \deh{x}.
\]
Let $\stdpSet{\spc1,\spc2}$ be the set of standard pairs. For each $\ell \in \stdpSet{\spc1,\spc2}$ we will use $a_\ell,b_\ell$, $\bG_\ell$ and $\rho_\ell$ for the domain, graph and density associated to the standard pair. Moreover, for future reference, given $\ell\in \stdpSet{\spc1,\spc2}$, let us define
\[
\thetasl=\int_{a_\ell}^{b_\ell} G_\ell(x)\rho_\ell(x) dx=\mu_\ell(\theta).
\]

A standard family can be conveniently regarded as a
\emph{random standard pair}.  More precisely: a \emph{standard family}
$\stdf$ is given by the triplet $(\alphaset,\cF,\fm)$ where $\alphaset$ is a countable set
$\cF$ is a map $\fell:\alphaset\to\stdpSet{\spc1,\spc2}$ and $\fm$ is a probability measure on $\alphaset$.

A $(\spc1,\spc2)$-standard family $\stdf$ identifies a unique probability measure $\mu_\stdf$
on $\bT^2$: for any
Borel-measurable function $g$ of $\bT^2$, let
\[
\mu_\stdf(g):=\int_\alphaset\mu_{\fellf{\alpha}}(g)\deh\fm.
\]
We will use $\stdfSet{(\spc1,\spc2)}{}$ to designate the set of standard families and ${\overline \bL}_{(\spc1,\spc2)}$
to designate the standard measures, that is, the weak closure of the measures associated to a standard family.

The basic properties of standard families rest in the following fact.
\begin{prop}[{\cite[Proposition 5.2]{DeL2}}]\label{p_invarianceStandardPairs}
There exist $\spc1$, $\spc2$ such that, if $\ve$ is sufficiently small and $\ell$ is a
standard pair, $F_{\ve*}\mu_{\ell}$ can be seen as the measure associated to a standard family.
\end{prop}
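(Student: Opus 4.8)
The plan is to verify the two defining conditions of a standard pair—the curvature bounds on the graph and the distortion bounds on the density—are reproduced, up to a uniform multiplicative constant, under one application of $F_\ve$, after which one covers the (possibly large) image curve by finitely many pieces of the correct length and renormalizes the density on each. First I would compute the image of a standard curve: if $\bG(x)=(x,G(x))$ parametrizes the graph of $G\in\Sigma_{\spc1}$, then $F_\ve(\bG(x))=(f(x,G(x)),\,G(x)+\ve\omega(x,G(x)))$. Since $\partial_x f\geq\lambda>1$, the first coordinate $\xi(x)=f(x,G(x))$ is a $\cC^3$ diffeomorphism onto its image (using $\|G'\|\leq\ve\spc1$ so that $\xi'=\partial_x f+\partial_\theta f\,G'\geq\lambda-\cO(\ve\spc1)>1$ for $\ve$ small); let $x=\xi^{-1}(\cdot)$ be its inverse. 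The image is then the graph of $\tilde G(\xi)=G(x)+\ve\omega(x,G(x))$. One computes $\tilde G'=\frac{d}{d\xi}[G+\ve\omega] = \xi'^{-1}(G'+\ve(\partial_x\omega+\partial_\theta\omega\,G'))$; since $|\xi'^{-1}|\leq\lambda^{-1}(1+\cO(\ve))$ and $\|\omega\|_{\cC^4}=1$, this is bounded by $\lambda^{-1}\ve\spc1+\cO(\ve)\leq\ve\spc1$ provided $\spc1$ is chosen large enough that the $\cO(\ve)$ constant (which does not depend on $\spc1$) is absorbed, i.e. $\lambda^{-1}\spc1+\Const\leq\spc1$, which holds for $\spc1\geq\Const(1-\lambda^{-1})^{-1}$. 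The analogous computations for $\tilde G''$ and $\tilde G'''$ bring in derivatives of $f$ up to order three and of $\omega$ up to order three (here the $\cC^5$/$\cC^4$ regularity is used comfortably), and in each case the new bound is $\lambda^{-1}$ (or a higher power of $\lambda^{-1}$) times the old one plus a $\spc1$-independent error, so choosing $\dtt0,\dtt1$ large enough in terms of $\lambda$, $\|f\|_{\cC^5}$, $\|\omega\|_{\cC^4}$ closes the induction.

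Next I would treat the density. The pushforward of $\rho$ under the graph reparametrization is $\tilde\rho(\xi)=\rho(x)\,|\xi'(x)|^{-1}$ with $x=\xi^{-1}(\xi)$. Then $\frac{\tilde\rho'}{\tilde\rho}=\xi'^{-1}\big(\frac{\rho'}{\rho}-\frac{\xi''}{\xi'}\big)$; the factor $\xi'^{-1}$ again contributes $\lambda^{-1}(1+\cO(\ve))<1$, the term $\frac{\rho'}{\rho}$ is controlled by $\spc2$, and $\frac{\xi''}{\xi'}$ is bounded by a constant depending only on $\|f\|_{\cC^2}$ and $\spc1$ (via $G',G''$). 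So $\|\tilde\rho'/\tilde\rho\|\leq\lambda^{-1}\spc2+\Const$, and choosing $\spc2$ large enough absorbs the constant exactly as before. The bound on $\tilde\rho''/\tilde\rho$ is handled the same way, involving $\|f\|_{\cC^3}$ and $\dttrho0$; choosing $\dttrho0$ large in terms of the geometric data closes that estimate too. At this point the image of each standard pair is a $\cC^3$ curve satisfying the curvature bounds and carrying a $\cC^2$ density satisfying the distortion bounds, but its horizontal extent can be as large as $\lambda\delta$ or more—outside $[\delta/2,\delta]$—and it may wrap around $\bT$.

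Finally I would cut: partition the image curve into consecutive subcurves each of horizontal length in $[\delta/2,\delta]$ (possible whenever the total length is at least $\delta/2$, which is guaranteed since it is at least $\lambda\delta/2>\delta/2$—if $\lambda$ is not large one first ensures $\delta$ small enough that no wrap-around causes a short leftover, or simply merges a short leftover with a neighbor since $2\delta\geq\lambda\delta$ is false in general, so more carefully: choose the number of pieces as $\lfloor(\text{length})/\delta\rfloor$ and distribute evenly, which lands every piece in $[\delta/2,\delta]$ as long as length $\geq\delta/2$). On each subcurve the restriction of $\tilde G$ still obeys the $\Sigma_{\spc1}$ bounds (restriction only decreases sup norms of derivatives), and the restricted-and-renormalized density $\tilde\rho/\!\int\tilde\rho$ still obeys the $D_{\spc2}$ bounds (renormalization is multiplication by a positive constant, which does not affect $\rho'/\rho$ or $\rho''/\rho$). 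Indexing these finitely many pieces by $\alphaset$ and weighting piece $\alpha$ by its mass $\fm(\alpha)=\int\tilde\rho\,\Id_{\text{piece }\alpha}$ gives a standard family $(\alphaset,\cF,\fm)$ whose associated measure is exactly $F_{\ve*}\mu_\ell$. The main obstacle, and the only place any genuine care is needed, is the simultaneous bootstrapping of the constants: one must order the choices correctly—$\spc1$ first (depending only on $\lambda$ and $\|f\|_{\cC^1}$, $\|\omega\|_{\cC^1}$), then $\dtt0,\dtt1$ (depending on $\spc1$ and higher $\cC^k$ norms), then $\spc2$, then $\dttrho0$—so that each error term introduced is independent of the constant it is being absorbed into; the contraction factor $\lambda^{-1}<1$ in the graph-curvature and density-distortion recursions is exactly what makes such absorption possible.
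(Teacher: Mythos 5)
This proposition is not proved in the present paper---it is imported verbatim from \cite[Proposition 5.2]{DeL2}---and your argument is essentially the same standard-pair invariance argument used there: push forward the graph and the density, use the expansion factor $\xi'\geq\lambda-\cO(\ve)$ to make the recursions for $\|G'\|,\|G''\|,\|G'''\|$ and $\|\rho'/\rho\|,\|\rho''/\rho\|$ contracting with $\spc1$- (resp.\ $\spc2$-)independent errors absorbed by choosing $\spc1,\dtt0,\dtt1,\spc2,\dttrho0$ in that order, and then cut the image curve into pieces of admissible length and renormalize. The only slip is cosmetic: with $L$ the horizontal length of the image, take $\lceil L/\delta\rceil$ equal pieces rather than $\lfloor L/\delta\rfloor$, so that every piece indeed has length in $[\delta/2,\delta]$.
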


Accordingly, if our initial condition is expressed by a standard pair, then the pushforward of the initial measure will
always consists of a convex combination of standard pairs, that is it will be an element of $\bL_{(\spc1,\spc2)}$. On the other hand the physical measures are obtained as the limit of the pushforward of initial measures with densities with respect to Lebesgue. Since such measures can be approximated by standard families, the physical measures can be obtained as accumulation points of standard families, see \cite[Lemma 9.8]{DeL2} for details. Hence, all the physical measures will belong to ${\overline \bL}_{(\spc1,\spc2)}$.

For a given, but arbitrary, smooth function $\psi$, let us define the function $\zeta_n$ as:
\begin{equation} \label{e_definitionZ}%
  \zeta_n=\ve\sum_{k=0}^{n-1}\psi\circ F_\ve^k.
\end{equation}
Next, let
$\slo_n=(\theta_n,\zeta_n)$ and define the polygonal interpolation
\[
\slo_\ve(t)=%
\slo_{\pint{t\vei}} +%
(t\vei-\pint{t\vei})%
(\slo_{\pint{t\vei}+1}-\slo_{\pint{t\vei}}).
\]
For any $t\geq 0$ and $\theta_*\in\bT^1$, we define the function
$\bar\slo(t,\theta_*)$ to be the solution of the ODE
\begin{equation}\label{e_averagedSlo}
\begin{split}
& \frac{\deh}{\deh t}\bar\slo(t)=\left(\bar\omega(\bar\theta(t)),\bar\psi(\bar\theta(t))\right),\\
&\bar\slo(0)=(\theta_*,0)
\end{split}
\end{equation}
where $\bar\psi(\theta)=\mu_\theta(\psi(\cdot,\theta))$, $\bar\omega=\mu_\theta(\omega(\cdot,\theta))$ and $\mu_\theta$ is the unique SRB measure of the map $f_\theta(x)=f(x,\theta)$. We conveniently introduced functions $\theta_\ve,\,
\zeta_\ve$ and $\bar\theta,\,\bar\zeta$ so that $\slo_\ve=(\theta_\ve,\zeta_\ve)$ and
$\bar\slo=(\bar\theta,\bar\zeta)$.

Then (see \cite[Theorem 2.1]{DeL1}), as $\ve\to 0$, provided that the initial
conditions $(x_0,\theta_0)$ are distributed according to standard pairs such that $\thetasl=\theta_*$, the random variable $\slo_\ve(t)$ converges
in probability to $\bar\slo(t,\theta_*)$.  It is then natural to attempt a description of
the behavior of deviations from the averaged dynamics.  For any $p=(x_0,\theta_0)$, let
$\Delta\slo(t,p)=(\Delta\theta(t,p),\Delta\zeta(t,p)):=\slo_\ve(t,p)-\bar\slo(t,\theta_0)$.
In this respect, if $\ve$ is sufficiently small, we can obtain (see~\cite[Theorem 2.2,
Corollaries 2.3-5]{DeL1}) the following
\begin{thm}[{\cite[Proposition 2.3]{DeL1}}]\label{t_largeDevz}
  Fix $T>0$ and define, for $R_\theta,R_\zeta>\Const\sqrt\ve$:
  \[
  Q(R_\theta,R_\zeta)=\{p\in\bT^2\st \sup_{t\in[0,T]} |\Delta\theta(t,p)|\geq
  R_\theta\text{ or} \sup_{t\in[0,T]} |\Delta\zeta(t,p)|\geq R_\zeta \}.
  \]
  Then, for any standard pair $\ell$, we have
  $\mu_\ell(Q(R_\theta,R_\zeta))<\exp(-\const\vei \min(R_\theta^2,R_\zeta^2))$, where
  $\const$ is a constant which does not depend on $\ell$.
\end{thm}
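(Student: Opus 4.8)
\emph{Sketch of proof.} The complete argument is \cite[Proposition~2.3 and Section~2]{DeL1}; we only indicate the plan. Write $\omega=\ho+\bar\omega$ and $\psi=(\psi-\bar\psi)+\bar\psi$, where $\bar\psi(\theta)=\mu_{\theta}(\psi(\cdot,\theta))$, so that with $M^{\theta}_{n}:=\ve\sum_{k=0}^{n-1}\ho(x_{k},\theta_{k})$ and $M^{\zeta}_{n}:=\ve\sum_{k=0}^{n-1}\big(\psi(x_{k},\theta_{k})-\bar\psi(\theta_{k})\big)$ one has $\theta_{n}-\theta_{0}=M^{\theta}_{n}+\ve\sum_{k<n}\bar\omega(\theta_{k})$ and $\zeta_{n}=M^{\zeta}_{n}+\ve\sum_{k<n}\bar\psi(\theta_{k})$. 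Since $\theta_{k}-\theta_{k-1}=O(\ve)$ and $\bar\omega,\bar\psi$ are Lipschitz, these Riemann sums agree up to $O(\ve)$ on $[0,T]$ (with $n=\pint{t\vei}$) with $\int_{0}^{t}\bar\omega(\theta_{\ve}(s))\,\deh s$ and $\int_{0}^{t}\bar\psi(\theta_{\ve}(s))\,\deh s$; subtracting \eqref{e_averagedSlo} this gives
\[
|\Delta\theta(t,p)|\le\sup_{n\le T\vei}|M^{\theta}_{n}|+\|\bar\omega'\|\int_{0}^{t}|\Delta\theta(s,p)|\,\deh s+\Const\ve,
\]
hence $\sup_{t\le T}|\Delta\theta(t,p)|\le\Const\big(\sup_{n}|M^{\theta}_{n}|+\ve\big)$ by Gronwall, and then — as $\Delta\zeta$ does not feed back into $\Delta\theta$ — $\sup_{t\le T}|\Delta\zeta(t,p)|\le\Const\big(\sup_{n}|M^{\theta}_{n}|+\sup_{n}|M^{\zeta}_{n}|+\ve\big)$. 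For a suitable small $c>0$, using $R_{\theta},R_{\zeta}>\Const\sqrt\ve$ and $\ve$ small, it follows that $Q(R_{\theta},R_{\zeta})\subseteq\{\sup_{n}|M^{\theta}_{n}|\ge c\,r\}\cup\{\sup_{n}|M^{\zeta}_{n}|\ge c\,r\}$ with $r:=\min(R_{\theta},R_{\zeta})$, so (after a union bound) it suffices to prove, uniformly over standard pairs $\ell$, that $\mu_{\ell}\big(\sup_{n\le T\vei}|M^{\bullet}_{n}|\ge r\big)\le\exp(-\const\vei r^{2})$ for $\bullet\in\{\theta,\zeta\}$ and $r>\sqrt\ve$ (for $r$ above a $T$‑dependent constant this event is empty, since $|M^{\theta}_{n}|\le T\|\ho\|_{\infty}$, resp.\ $|M^{\zeta}_{n}|\le T\|\psi-\bar\psi\|_{\infty}$).

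For this noise bound I would prove a Chernoff‑type exponential moment estimate: uniformly over standard pairs $\ell$, for $|\lambda|\le\const\vei$ and $n\le T\vei$,
\[
\mu_{\ell}\big(\exp(\lambda M^{\bullet}_{n})\big)\le\exp\big(\Const\,\lambda^{2}\ve T+\Const\,\lambda\sqrt\ve\big).
\]
Writing $\lambda M^{\zeta}_{n}=s\sum_{k<n}\psi\circ F_{\ve}^{k}-s\sum_{k<n}\bar\psi(\theta_{k})$ with $s:=\lambda\ve$ small, the quantity $\mu_{\ell}\big(\prod_{k<n}e^{s\,\psi\circ F_{\ve}^{k}}\big)$ is governed by the iteration of the twisted transfer operators $\cL_{\theta,s}(u):=\cL_{\theta}(e^{s\psi(\cdot,\theta)}u)$ of the uniformly expanding maps $f_{\theta}$: their leading eigenvalue $\evalue_{\theta}(s)$ is analytic near $s=0$ with $\evalue_{\theta}(0)=1$, $\partial_{s}\evalue_{\theta}(0)=\bar\psi(\theta)$ and $\partial_{s}^{2}\evalue_{\theta}(0)$ given by the Green--Kubo sum (the analogue for $\psi$ of \eqref{e_definitionBarChi}), so that $\prod_{k<n}\evalue_{\theta_{k}}(s)\approx\exp\big(s\sum_{k<n}\bar\psi(\theta_{k})+\Const\,s^{2}n\big)$ and the explicit factor $e^{-s\sum_{k<n}\bar\psi(\theta_{k})}$ cancels the first‑order part, leaving $\exp(\Const\,s^{2}n)=\exp(\Const\,\lambda^{2}\ve^{2}n)\le\exp(\Const\,\lambda^{2}\ve T)$ (the residual $\Const\,\lambda\sqrt\ve$ absorbs the error in this cancellation). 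Making this rigorous is where the standard‑pair machinery enters: one follows the push‑forward of $\ell$ under $F_{\ve}$, which stays a standard family by Proposition~\ref{p_invarianceStandardPairs}, with $x$‑conditionals contracted towards the equilibria of $f_{\theta}$ at the uniform spectral rate, so that the genuine, slowly $\theta$‑varying evolution is a controlled perturbation of the frozen‑$\theta$ twisted operators; this is the content of the large‑deviation analysis in \cite{DeL1}. Granting this, a Chernoff optimization over $\lambda$ — admissible because for $r$ in the relevant range the optimal $\lambda$, of order $\vei r$, stays in $[-\const\vei,\const\vei]$, and the residual term is controlled using $r>\Const\sqrt\ve$ — gives $\mu_{\ell}(M^{\bullet}_{n}\ge r)\le\exp(-\const\vei r^{2})$; applying this to the nonnegative supermartingale $\exp(\lambda M^{\bullet}_{n}-\Const\lambda^{2}\ve T-\Const\lambda\sqrt\ve)$ together with Doob's maximal inequality yields the $\sup_{n\le T\vei}$ version, and the same argument with $\ho$ in place of $\psi-\bar\psi$ covers $\bullet=\theta$.

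I expect the real obstacle to be precisely this uniform, perturbative transfer‑operator estimate: showing that the approximate multiplicativity $\mu_{\ell}\big(\prod_{k<n}e^{s\psi\circ F_{\ve}^{k}}\big)\approx\prod_{k<n}\evalue_{\theta_{k}}(s)$ holds with errors that do not accumulate catastrophically under the uniform expansion of $f$ in the $x$‑direction, uniformly over \emph{all} standard pairs arising along the orbit and over the slow drift of $\theta$. This delicate point is carried out in \cite{DeL1}, to which we refer for the details.
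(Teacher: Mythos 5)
This theorem is not proved in the paper at all: it is imported verbatim from \cite[Theorem 2.2 and its corollaries]{DeL1}, so there is no in-paper argument to compare yours against. Your reduction (splitting off the drift, Gronwall in $\Delta\theta$ and then in $\Delta\zeta$, Chernoff with a sub-Gaussian exponential-moment bound, and the observation that the event is empty for $R$ above a $T$-dependent constant) is consistent in spirit with how \cite{DeL1} proceeds, and the arithmetic of the Chernoff optimisation, including the role of the threshold $R>\Const\sqrt\ve$ in absorbing the $\lambda\sqrt\ve$ residual, is coherent. But as a proof it has two genuine gaps beyond the one you flag yourself. First, the uniform exponential-moment estimate over standard pairs is the entire content of the result; the frozen-$\theta$ twisted-transfer-operator heuristic you describe is the right picture, but you give no mechanism for why the errors in the approximate multiplicativity do not accumulate over $T\vei$ steps, and you explicitly defer this to \cite{DeL1} — so the attempt is a reduction to the reference, not an independent proof.

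Second, and more concretely wrong as written: the Doob step. The sums $M^{\bullet}_n$ are \emph{not} (super)martingales under $\mu_\ell$. The dynamics is deterministic, so with respect to the natural filtration conditional expectations are trivial; and if one instead conditions on the standard family at time $k$ (the only sensible filtration here), the increment $\ho(x_k,\theta_k)$, resp.\ $\psi(x_k,\theta_k)-\bar\psi(\theta_k)$, is centred with respect to the frozen SRB measure $\mu_{\theta_k}$, not with respect to that conditional law — the conditional mean is only small after a decorrelation time of order $\ln\vei$, so the one-step supermartingale property of $\exp(\lambda M_n-\dots)$ fails. Nor can you replace Doob by a union bound over $n\le T\vei$: that costs a factor $T\vei$, which is not absorbable in the regime $R\asymp\sqrt\ve$ allowed by the statement (there $\vei\min(R_\theta^2,R_\zeta^2)$ is only of order one, no matter how large the constant $\Const$ in the threshold). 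The passage from fixed-time to maximal estimates therefore requires the block/coarse-graining machinery of \cite{DeL1} (which proves the sup-over-$[0,T]$ version directly), and cannot be obtained by the martingale shortcut you propose.
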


We say that a differentiable path $\fpath$ of length $T$ is \emph{admissible} if for any $s\in[0,T]$,
$h'(s)\subset\intr\Omega(h(s))$, where for any $\theta\in\bT$, we define the (non-empty, convex and compact) set
\begin{align}\label{e_definitionOmega}
  \Omega(\theta)=\{\mu(\omega(\cdot,\theta))\,| \,\mu\text{ is a
  }f_\theta\text{-invariant probability}\}.
\end{align}

\begin{thm}[{\cite[Theorem 6.3]{DeL2}}]\label{l_largeDevzLowerBound}
  Let $\fpath\in\cC^1([0,T],\bR)$ be an admissible path joining $\theta_0$ to
  $\theta_1$; then, provided $\ve$ small enough, for any standard pair $\ell$ which intersects $\{\theta=\theta_0\}$
  there exists a set $Q_h\subset\supp\ell$ so that $\mu_\ell(Q_h)>\expo{-\const\vei T}$
  and $F_\ve^{\pint{T\vei}}Q_h\subset\bT^1\times B(\theta_1,\Const\ve^{ 5/12})$.\footnote{ Note that \cite[Theorem 6.3]{DeL2} does not provide the explicit $T$ dependence in the lower bound of the measure of $Q_h$ stated here, however the latter is not needed to prove that the measure is strictly positive. On the other end, once there exists one trajectory in $\supp \ell$ that ends up in $B(\theta_1,\frac 12\Const\ve^{ 5/12})$, then trajectories that start in an $\exp{-\const \ve^{-1} T}$ neighborhood will depart from such a trajectory less than $\frac 12\Const\ve^{ 5/12}$ in time $T$, hence the current claim.}
\end{thm}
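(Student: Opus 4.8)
The plan is to construct $Q_\fpath$ by a block scheme that forces the fast dynamics to realise, over consecutive windows of $m=\pint{\ve^{-\sigma}}$ iterates (for a suitable $\sigma\in(0,1)$ fixed at the end), the increment of $\fpath$ prescribed on that window, keeping at each step only the exponentially small — but quantitatively controlled — sub-family on which the forcing succeeds. Since a standard curve has $\theta$-oscillation $O(\ve)$, the hypothesis on $\ell$ gives $\supp\ell\subset\bT^1\times B(\theta_0,\Const\ve)$. Admissibility is an open, uniform condition: $\inf_{s\in[0,T]}\dist(\fpath'(s),\partial\Omega(\fpath(s)))=:2\varrho_0>0$, and $\theta\mapsto\Omega(\theta)$ varies continuously because $f_\theta$ depends smoothly on $\theta$. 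Set $\tau=\ve^{1-\sigma}$ and split $[0,T]$ into $N=\pint{T/\tau}$ blocks; on block $j=[t_j,t_{j+1}]$ take as target slope $v_j=(\fpath(t_{j+1})-\fpath(t_j))/\tau=\fpath'(\xi_j)$, which by the uniform gap and continuity of $\Omega$ stays at distance $\geq\varrho_0/2$ from $\partial\Omega(\fpath(s))$ for all $s$ in the block, once $\ve$ is small; thus no regularisation of $\fpath$ is needed, we aim at its exact increments.

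The core input is a one-block large deviations lower bound for the frozen expanding map: for every standard pair $\ell'$, every $\theta$, every $v$ with $\dist(v,\partial\Omega(\theta))\geq\varrho_0/2$, and every $\eta>0$,
\[
\mu_{\ell'}\!\left(\Big\{x : \Big|\tfrac1m\sum_{k=0}^{m-1}\omega(f_\theta^k x,\theta)-v\Big|<\eta\Big\}\right)\ \geq\ c\,\eta\,\sqrt m\;e^{-\const m}.
\]
Indeed, by \ref{a_noCobo} the pressure $\lambda\mapsto P_\theta(\lambda\,\omega(\cdot,\theta))$, taken relative to the geometric potential so that the reference equilibrium state is $\mu_\theta$, is real-analytic and strictly convex; hence its Legendre transform $I_\theta$ is finite on $\intr\Omega(\theta)$ and, on $\{v:\dist(v,\partial\Omega(\theta))\geq\varrho_0/2\}$, bounded by a constant $\const$ uniformly in $\theta$ (compactness of $\bT$, smoothness in $\theta$). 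The bound then follows by tilting the reference density on the standard curve by $e^{\lambda\sum_k\omega(f_\theta^kx,\theta)}$, with $\lambda$ the solution of $P_\theta'(\lambda)=v$, and invoking a local limit theorem for the equilibrium state of $f_\theta$ to control the central window — which produces the $\eta\sqrt m$ prefactor, and, for the genuinely non-autonomous map arising inside a block, is where the quantitative local limit theorem with error of Theorem \ref{thm:lclt} enters.

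Now iterate, starting from $\ell$. Assume inductively that at the beginning of block $j$ we have a standard family whose component pairs all have $\theta$ within $\Const\tau$ of $\fpath(t_j)$. By Proposition \ref{p_invarianceStandardPairs}, pushing each such pair forward $m$ steps produces again a standard family; on each component we discard everything except the piece on which the block-average of $\omega$ lies within $\eta:=\tau$ of $v_j$ (harmless for admissibility, since the block's $\theta$-oscillation is only $O(\tau)$), which by the displayed estimate — the polynomial prefactor $c\tau\sqrt m$ being negligible, since it is raised to the power $N$ and $N\log\ve^{-1}\ll Nm=\pint{T\vei}$ — retains a fraction $\geq e^{-\const m}$ of the mass. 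Since $|\theta_{k+1}-\theta_k|\leq\ve$, inside a block $\theta$ moves by $\leq\ve m\leq\Const\tau$, so $\omega(x_k,\theta_k)=\omega(x_k,\fpath(t_j))+O(\tau)$ and the increment of $\theta$ across block $j$ equals $\tau v_j+O(\tau^2)=\fpath(t_{j+1})-\fpath(t_j)+O(\tau^2)$; the induction therefore closes. After $N$ blocks the surviving set $Q_\fpath\subset\supp\ell$ satisfies $\mu_\ell(Q_\fpath)\geq e^{-\const Nm}\geq\expo{-\const\vei T}$, while telescoping the increments yields $|\theta_{\pint{T\vei}}-\theta_1|\leq\Const\,N\tau^2=\Const\,T\tau=\Const\,T\ve^{1-\sigma}$ on $Q_\fpath$. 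Choosing $\sigma$ so as to balance this terminal deviation $\ve^{1-\sigma}$ against the errors of the available estimates for $m=\pint{\ve^{-\sigma}}$ iterates — which improve as $\sigma$ grows — optimises the deviation to $\Const\ve^{5/12}$; that is, $F_\ve^{\pint{T\vei}}Q_\fpath\subset\bT^1\times B(\theta_1,\Const\ve^{5/12})$.

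The main obstacle is the concatenation: the one-block lower bound must be applied not to a single expanding map but to the conditional measures produced by the previous blocks, so one genuinely needs these to remain (convex combinations of) standard pairs — which is exactly what Proposition \ref{p_invarianceStandardPairs} provides — while controlling, uniformly over all $N\sim T\ve^{\sigma-1}$ blocks and over all $\theta$ near the path, both the rate $\const$ in the tilting estimate and the accumulated feedback of the drifting slow variable onto $\omega$. Carrying this bookkeeping far enough to reach the scale $\ve^{5/12}$, rather than the weaker exponent obtained by the most naive freezing, is precisely where the quantitative local limit theorem of \cite{DeL1} is indispensable.
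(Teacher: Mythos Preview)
The paper does not prove this theorem here: it is quoted from \cite[Theorem~6.3]{DeL2}, and the only argument supplied in this paper is the footnote, which upgrades the qualitative conclusion of the cited result (existence of a set of positive $\mu_\ell$-measure landing in the target ball) to the explicit lower bound $\exp(-\const\vei T)$ by a one-line shadowing argument. Namely, once a single point $p\in\supp\ell$ is known with $F_\ve^{\pint{T\vei}}(p)\in\bT^1\times B(\theta_1,\tfrac12\Const\ve^{5/12})$, any $q$ with $|p-q|\le e^{-\const\vei T}$ separates from $p$ by at most a factor $e^{\Const\vei T}$ over $\pint{T\vei}$ iterates, so the $e^{-\const\vei T}$-ball around $p$ inside $\supp\ell$ already serves as $Q_\fpath$.

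Your approach is genuinely different: rather than reduce to the cited theorem, you attempt to reprove the full result via the block construction that is (presumably) the actual content of \cite[Theorem~6.3]{DeL2}. The scheme --- tilting plus a local CLT for a one-block lower bound, then concatenation via Proposition~\ref{p_invarianceStandardPairs} --- is the standard route to such large-deviation lower bounds and is correct in outline. However, two of the points you yourself flag as ``obstacles'' are left unresolved. First, your inductive hypothesis ``$\theta$ within $\Const\tau$ of $\fpath(t_j)$'' does not close as stated: the per-block error is $O(\tau^2)$, so after $j$ blocks the accumulated deviation is $O(j\tau^2)$, which for $j\sim N=T/\tau$ reaches $O(T\tau)\gg\tau$; one must carry this growing error through the induction and check it neither violates the admissibility gap $\varrho_0$ nor the hypotheses of the one-block estimate. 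Second, you assert that ``balancing'' yields the exponent $5/12$ without exhibiting the competing constraint on $\sigma$ coming from the errors in the block-level local limit theorem; pinning this down is precisely where the hard analysis of \cite{DeL1,DeL2} lies, and it cannot be waved through.

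In short, the paper's route is a two-line reduction (cite plus shadow); yours is a direct re-derivation whose skeleton is right but whose two hardest steps --- closing the induction with a drifting error, and deriving the exponent $5/12$ --- remain at the level of a sketch.
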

In fact we can also obtain a Local Central Limit Theorem:
\begin{thm}[{\cite[Theorem 2.7]{DeL1}}]\label{thm:lclt}
 For any $T>0$, there exists $\ve_0>0$ so that the following holds.  For any $\beta>0$,
    compact interval $I\subset\bR$, $|I|\leq 1$, real numbers $\shiftPar>0$, $\ve\in(0,\ve_0)$,
    $t\in[\ve^{1/2000},T]$, and standard pair $\ell\in \stdpSet{\spc1,\spc2}$, we have:
  \begin{equation} \label{e_indicatorlclt}
 \frac{ \mu_\ell(\deviation\theta(t,\cdot)\in
    \ve I + \shiftPar\veh)}{\sqrt\ve}= \Leb\, I\left[
      \frac{e^{-\shiftPar^2/2\Var_t^2(\thetasl)}}{\Var_t(\thetasl)\sqrt{2\pi}}\right]+\cO(\ve^{\efrac 12-\beta}).
  \end{equation}
  where the variance $\Var_t^2(\theta)$ is given by
  \begin{equation}\label{e_variancelclt}
  \Var_t^2 (\theta)= \int_0^t e^{2\int_s^t\bar\omega'(\bar\theta(r,\theta))\deh r}\bVar^2(\bar\theta(s,\theta))\deh s.
  \end{equation}
 and $\bVar^2:\bT\to\bRp$ is defined in \eqref{e_definitionBarChi}.
\end{thm}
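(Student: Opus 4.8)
The plan is to prove \eqref{e_indicatorlclt} by a Fourier‑analytic argument built on top of the non‑stationary transfer‑operator technology underlying the averaging and large‑deviation statements already recalled. Writing $I=[a,b]$, the first step is to smooth the indicator: sandwich $\mathbf 1_I$ between smooth functions $\mathbf 1_I^{\pm}$ with $0\le\mathbf 1_I^-\le\mathbf 1_I\le\mathbf 1_I^+$, with $\int(\mathbf 1_I^+-\mathbf 1_I^-)\le\ve^{\beta'}$, and with $|\widehat{\mathbf 1_I^{\pm}}(s)|\le C_N(1+|s|)^{-N}$ for every $N$ (a Beurling--Selberg type mollification). Fourier inversion together with the rescaling $\xi=\eta/\veh$ — which reflects that the window $\ve I+\shiftPar\veh$ is much thinner than the fluctuation scale $\veh$ — then gives
\begin{equation*}
\frac{\mu_\ell(\Delta\theta(t,\cdot)\in \ve I^{\pm}+\shiftPar\veh)}{\sqrt\ve}=\frac{1}{2\pi}\int_{\bR}\widehat{\mathbf 1_I^{\pm}}(\sqrt\ve\,\eta)\, e^{-i\eta\shiftPar}\,\phi_\ell(\eta)\,\deh\eta,
\end{equation*}
where $\phi_\ell(\eta)=\bE_{\mu_\ell}\big(e^{i\eta\Delta\theta(t,\cdot)/\veh}\big)$ is the characteristic function of the rescaled deviation. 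Everything then reduces to controlling $\phi_\ell(\eta)$, uniformly in $\ell$, $\shiftPar$ and $t\in[\ve^{1/2000},T]$, for $|\eta|$ up to about $\ve^{-\beta}$; beyond that the rapid decay of $\widehat{\mathbf 1_I^{\pm}}$ takes over. I would split the $\eta$‑integral into a central band $|\eta|\le\rho$, a moderate band $\rho\le|\eta|\le\ve^{-\beta}$, and the tail.

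On the central band I would use the transfer‑operator method in its sequential (non‑autonomous) form. Partition the $\lfloor t\vei\rfloor$ iterates into $O(\vei)$ consecutive blocks of length $\sim\ln\vei$; on each block $\theta$ is nearly frozen, $x$ has time to equilibrate towards $\mu_\theta$, and the block's contribution to $\Delta\theta/\veh$ is, up to controlled errors, a Birkhoff‑type sum of $\ho(\cdot,\theta)$ under $f_\theta$ propagated by the linearised averaged flow $\exp(\int\bar\omega')$; crucially the twist per step is $O(\eta\veh)$, hence small. Consequently $\phi_\ell(\eta)$ is, to leading order, a product over blocks of the top eigenvalues $\evalue_{\theta_s}(is\veh\eta)$ of the twisted transfer operators $\cL_{\theta_s,is}$ of $f_{\theta_s}$, evaluated along the averaged trajectory $\theta_s=\bar\theta(s,\thetasl)$. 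Using differentiability in $\theta$ and analyticity in the twist of this spectral data (as in \cite[Section 8]{GL06}), with $\partial\evalue_\theta(0)$ matching $\bar\omega$ and $\partial^2\evalue_\theta(0)$ matching $\bVar^2$ through the Green--Kubo formula \eqref{e_definitionBarChi}, the logarithm of the product telescopes into a Riemann sum converging to $-\tfrac12\eta^2\Var_t^2(\thetasl)$, with the exact kernel \eqref{e_variancelclt} coming out automatically (the $e^{2\int_s^t\bar\omega'}$ factor is produced by propagating the frozen increments by the averaged flow). The errors to track are: the block‑freezing/averaging error; an exponentially small (in block length) remainder from the part of the spectrum below the gap; and an Edgeworth cubic term of size $O(|\eta|^3\veh)$ from the third Taylor coefficient of $\log\evalue$. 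Inserting $\phi_\ell(\eta)=e^{-\eta^2\Var_t^2/2}\big(1+O(|\eta|^3\veh)\big)$ into the inversion integral and evaluating the Gaussian integral yields the main term $\Leb I\cdot e^{-\shiftPar^2/2\Var_t^2}/(\Var_t\sqrt{2\pi})$, while the cubic correction together with the mollification error produces the claimed $\cO(\ve^{1/2-\beta})$ (choosing $\beta$ slightly above $\beta'$).

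For the moderate band I would invoke a Dolgopyat‑type uniform aperiodicity estimate, which is exactly where \ref{a_noCobo} enters: since $\ho(\cdot,\theta)$ is not a coboundary for any $f_\theta$, the operators $\cL_{\theta,is}$ have spectral radius bounded away from $1$, uniformly for $\theta\in\bT$ and $s$ in compacts avoiding $0$. This gives $|\phi_\ell(\eta)|\le C e^{-c\eta^2}$ for $\rho\le|\eta|\lesssim1$ and genuine exponential decay in $\lfloor t\vei\rfloor$ for $|\eta|\gtrsim1$, so this band contributes $\cO(\ve^\infty)$ after multiplication by the decaying $\widehat{\mathbf 1_I^{\pm}}$. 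Finally one would check that all estimates are uniform over $\ell\in\stdpSet{\spc1,\spc2}$ — this rests on the bounded‑distortion constraints in the definition of standard pairs — and that the lower cutoff $t\ge\ve^{1/2000}$ leaves room both for the initial $O(\ln\vei)$ equilibration and for keeping $\Var_t$ from degenerating too fast. The hard part will be the seam between the two regimes: marrying perturbative spectral theory (valid for small twist) with the Dolgopyat bound (valid for twist bounded away from $0$) in the genuinely non‑autonomous setting where the fibre map $f_{\theta_s}$ drifts along the averaged trajectory, and doing the book‑keeping of the freezing/averaging error, the spectral‑gap remainder, the Edgeworth cubic term and the mollification error precisely enough that they combine to $\cO(\ve^{1/2-\beta})$ rather than a cruder power of $\ve$.
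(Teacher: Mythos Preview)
The theorem you are attempting to prove is not actually proved in this paper: it is quoted from \cite[Theorem~2.7]{DeL1} as part of the recap in Section~\ref{ss_limitThms}, and no argument for it is given here. So there is no proof in the present paper against which to compare your proposal.

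That said, your outline is the correct strategy and matches the standard route to local limit theorems in the (partially) hyperbolic setting, which is indeed the approach of \cite{DeL1}: mollify the indicator, apply Fourier inversion, and reduce everything to uniform control of the characteristic function $\phi_\ell(\eta)$ via the spectral theory of complex-twisted transfer operators along the non-autonomous sequence $f_{\bar\theta(s,\thetasl)}$. The perturbative regime (small twist) yields the Gaussian main term through the identification of $\partial^2\log\evalue_\theta(0)$ with $\bVar^2(\theta)$ via Green--Kubo, with the propagation by the averaged flow producing the kernel \eqref{e_variancelclt}; the Dolgopyat-type bound, available precisely because of~\ref{a_noCobo}, handles frequencies bounded away from zero. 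Your assessment that the genuine work lies in the non-autonomous bookkeeping and in stitching the perturbative and Dolgopyat regimes together so that the combined error is $\cO(\ve^{1/2-\beta})$ is accurate; this is also where the bulk of the effort in \cite{DeL1} goes.
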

\begin{rem}
Essentially, the above theorem states that the distribution of $\theta_\ve(t)$ looks like it has a regular density up to scale $\ve^{\frac 32-\beta}$. To determine the properties of the distribution below such a scale, it requires further investigation (possibly in the spirit of \cite{AGT}).
\end{rem}
Observe that, $\bVar$ defined above is bounded away from $0$ by assumption
$\ref{a_noCobo}$, hence we conclude that
\begin{align}\label{e_varianceEstimate}
  \Const t \leq \Var^2_t\leq\Const\expo{\const t}t
\end{align}

The above results will be instrumental in the following section.

\section{Deterministic versus random}\label{sec:det-rand}
The goal of this section is to show that the results of the previous section can be used to predict very precisely the behavior of the system in the long time regime.

For each $\theta\in\bT$ let us define the SDE
\begin{equation}\label{e_deviationDiffusion}
\begin{split}
&  \deh\deviation(t,\theta) = \bar\omega'(\bar\theta(t, \theta))\deviation(t)\deh t + \bVar(\bar\theta(t,\theta)\deh B(t)\\
&\deviation(0,\theta)=0,
\end{split}
\end{equation}
where $B(t)$ is a standard Brownian motion.
Note that, for a fixed standard pair $\ell$, the distribution on the left hand side of \eqref{e_indicatorlclt} is exactly the distribution, at time $t$, of the solution of the above SDE for $\theta=\thetasl$.
Indeed, in such a case, the solution of \eqref{e_deviationDiffusion} is given by
\[
\deviation(t,\thetasl) =\int_0^t
e^{\int_s^t\bar\omega'(\bar\theta(\tau,\thetasl))d\tau}\bVar(\bar\theta(s,\thetasl)) \deh
B(s),
\]
which is a zero mean Gaussian random variable with variance given by \eqref{e_variancelclt}.

Given any standard pair $\ell$ we can then define the following Markov process $\Beta_0$: Fix $T\in\bR_+$. For each $\theta\in\bT$ define the auxiliary process
\[
\Beta_*(t, \theta)=\bar\theta(t,\theta)+\sqrt\ve \deviation (t,\theta).
\]
Then, for $t\in [0,T]$,  $\Beta_0$ is defined as
\begin{equation}\label{eq:processzero}
  \Beta_0(t, \thetasl)=\bar\theta(t,\thetasl)+\sqrt\ve \deviation (t,\thetasl).
\end{equation}
While, for times $t\in [kT,(k+1)T]$ the process is defined by the Markov property
\[
\bE(f(\Beta_0(s+kT, \thetasl)))=\bE(f(\Beta_*(s-kT, z))\;|\; \Beta_0(kT,\thetasl)=z).
\]
In the following we will suppress the dependence on the initial point, if it does not cause any confusion.
From Theorem \ref{thm:lclt} it follows:
\begin{lem}\label{cor:first-step}
For any $\beta>0$, $\alpha\in(0,\beta)$, $\ve\in (0,\ve_0)$, standard pair $\ell$ and $t\in[\ve^{1/2000},\ve^{-\alpha}]$, there exists a coupling $\bP_c$ between $\theta_\ve(t)$, under $\mu_\ell$, and $\Beta_0(t,\thetasl)$:
  \begin{equation} \label{e_indicatorlclt-long}
\bP_c(|\theta_\ve(t)-\Beta_0(t,\thetasl)|\geq \ve)=\cO(\ve^{1/2-\beta}).
  \end{equation}
\end{lem}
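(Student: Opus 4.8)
The plan is to deduce the coupling in Lemma~\ref{cor:first-step} directly from the Local Central Limit Theorem (Theorem~\ref{thm:lclt}) by a standard quantile-coupling (monotone transport) construction. The key observation, already noted right after \eqref{e_deviationDiffusion}, is that for a fixed standard pair $\ell$ the random variable $\Beta_0(t,\thetasl)=\bar\theta(t,\thetasl)+\sqrt\ve\,\deviation(t,\thetasl)$ has for $t\in[0,T]$ an \emph{exactly} Gaussian law, namely $\bar\theta(t,\thetasl)+\sqrt\ve\,\mathcal N(0,\Var_t^2(\thetasl))$, whose density is precisely the main term on the right-hand side of \eqref{e_indicatorlclt} (after the change of variables $\deviation\theta=\theta-\bar\theta$, $\shiftPar=(\theta-\bar\theta)/\sqrt\ve$). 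Thus \eqref{e_indicatorlclt} says exactly that, for intervals of length $\ve|I|\le\ve$, the probability $\mu_\ell(\deviation\theta(t,\cdot)\in\ve I+\shiftPar\veh)$ agrees with $\bP(\Beta_0(t,\thetasl)-\bar\theta(t,\thetasl)\in\ve I+\shiftPar\veh)$ up to an additive error $\cO(\ve^{3/2-\beta})$ (note the factor $\sqrt\ve$ in the denominator). Restricting to $t\le\ve^{-\alpha}$ with $\alpha<\beta$ keeps $\Var_t^2$ polynomially bounded by \eqref{e_varianceEstimate}, so all constants remain under control.

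First I would pass from the ``density at scale $\ve$'' statement \eqref{e_indicatorlclt} to a statement about cumulative distribution functions. Writing $G_\ell(r)=\mu_\ell(\theta_\ve(t)\le r)$ and $G_0(r)=\bP(\Beta_0(t,\thetasl)\le r)$, one covers any interval by $\cO(\ve^{-1})$ consecutive intervals of length $\ve$; each contributes an error $\cO(\ve^{3/2-\beta})$, but one must be slightly careful because naive summation over $\cO(\ve^{-1})$ intervals would only give $\cO(\ve^{1/2-\beta})$ on the \emph{CDF} — which is in fact exactly what we want for the final bound, since we will invert the CDF at resolution $\ve$. More precisely, I claim $\sup_r|G_\ell(r)-G_0(r)|=\cO(\ve^{1/2-\beta'})$ for any $\beta'>\beta$: split $(-\infty,r]$ into a Gaussian ``bulk'' of width $\cO(\sqrt\ve\log\ve^{-1})$ around $\bar\theta(t,\thetasl)$, where one sums $\cO(\log\ve^{-1})$ unit-$\ve$ intervals and gets $\cO(\ve^{1/2-\beta}\log\ve^{-1})$, plus Gaussian tails which are super-polynomially small, plus (using Theorem~\ref{t_largeDevz} with $R_\theta$ of order $\sqrt{\ve\log\ve^{-1}}$) the corresponding tail of $\mu_\ell$, also super-polynomially small. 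Hence $\|G_\ell-G_0\|_\infty=\cO(\ve^{1/2-\beta'})$.

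Then I would define the monotone coupling: on a common probability space take $U$ uniform on $[0,1]$ and set $\theta_\ve(t)=G_\ell^{-1}(U)$, $\Beta_0(t,\thetasl)=G_0^{-1}(U)$ (generalised inverses). Since $\Beta_0$ has a bounded, explicit Gaussian density $g_0$ bounded below on the bulk by a constant times $\ve^{-1/2}$ (again using \eqref{e_varianceEstimate} to bound $\Var_t$ from above and the non-coboundary assumption \ref{a_noCobo} to bound it from below), the standard estimate $|G_\ell^{-1}(u)-G_0^{-1}(u)|\le \|G_\ell-G_0\|_\infty/\inf g_0$ on the event where $G_0^{-1}(U)$ lies in the bulk gives $|\theta_\ve(t)-\Beta_0(t,\thetasl)|\le \Const\,\ve^{1/2-\beta'}\cdot\ve^{-1/2}=\Const\,\ve^{-\beta'}$ — which is far worse than $\ve$. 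This is the main obstacle: the CDF closeness alone is too weak. The fix is to not convert to the CDF at all but to keep the \emph{local} information. Instead, partition $\bR$ into the grid $\{\bar\theta+\ve j:j\in\bZ\}$; \eqref{e_indicatorlclt} says the $\mu_\ell$-mass and the $\Beta_0$-mass of each cell agree up to $\cO(\ve^{3/2-\beta})$ \emph{per cell}, and there are only $\cO(\ve^{-1/2}\log\ve^{-1})$ cells carrying non-negligible mass. One then couples cell-by-cell: match as much mass as possible within each common cell (cost $0$ in the event $\{|\theta_\ve(t)-\Beta_0|<\ve\}$ up to the cell width), and the residual mismatched mass, of total size $\cO(\ve^{-1/2}\log\ve^{-1})\cdot\cO(\ve^{3/2-\beta})=\cO(\ve^{1-\beta}\log\ve^{-1})=\cO(\ve^{1/2-\beta})$ (reabsorbing the log and using $\beta$ was arbitrary), is coupled arbitrarily and placed entirely in the ``bad'' event. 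Adding the super-polynomially small tail contributions from Theorem~\ref{t_largeDevz} yields exactly $\bP_c(|\theta_\ve(t)-\Beta_0(t,\thetasl)|\ge\ve)=\cO(\ve^{1/2-\beta})$, as claimed. The only genuinely delicate points are the bookkeeping that the number of charged cells is $\cO(\ve^{-1/2}\mathrm{polylog})$ rather than $\cO(\ve^{-1})$ (so that per-cell error $\ve^{3/2-\beta}$ sums to $\ve-$something, not $\sqrt\ve-$something — in fact this gives room to spare, which is why the statement tolerates the crude exponent $1/2-\beta$), and checking that the constants hidden in Theorem~\ref{thm:lclt} are uniform over $t\in[\ve^{1/2000},\ve^{-\alpha}]$ and over standard pairs, which is part of that theorem's statement together with \eqref{e_varianceEstimate}.
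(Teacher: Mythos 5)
Your cell-by-cell matching construction is essentially the paper's first step, and for $t\in[\ve^{1/2000},T]$ with $T$ fixed it works (modulo a harmless arithmetic slip: multiplying \eqref{e_indicatorlclt} through by $\sqrt\ve$ gives a per-cell error $\cO(\ve^{1-\beta})$, not $\cO(\ve^{3/2-\beta})$; with $\cO(\ve^{-1/2}\,\mathrm{polylog})$ charged cells the total uncoupled mass is still $\cO(\ve^{1/2-\beta'})$, as in the paper, but there is no ``room to spare''). The genuine gap is the range of $t$. Theorem~\ref{thm:lclt} is stated for $t\in[\ve^{1/2000},T]$ with $T$ fixed \emph{before} $\ve_0$ is chosen; it is not asserted, and is not true with uniform constants, for $t$ up to $\ve^{-\alpha}$. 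Your claim that \eqref{e_varianceEstimate} keeps $\Var_t^2$ ``polynomially bounded'' on $[0,\ve^{-\alpha}]$ is incorrect: the upper bound there is $\Const\, t\, e^{\const t}$, which is super-exponentially large at $t=\ve^{-\alpha}$. Moreover $\Beta_0(t,\thetasl)$ is Gaussian with variance $\ve\Var_t^2(\thetasl)$ only for $t\le T$; for $t>T$ it is defined by concatenation via the Markov property, so even the target law you are coupling to is not the one your argument uses. Indeed, at times $t\sim\ve^{-\alpha}$ the law of $\theta_\ve(t)$ is generally \emph{not} close to a single Gaussian centred at $\bar\theta(t,\thetasl)$ (in the presence of sinks it concentrates near mixtures of $\cO(\sqrt\ve)$-wide Gaussians at the stable zeros), so a one-shot LCLT of the form you invoke cannot hold on that time range.

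What is missing is the inductive, block-by-block argument that the paper uses to bridge from $T$ to $\ve^{-\alpha}$: push the standard pair forward to time $kT-\ve^{1/2000}$ and decompose it as a standard family (Proposition~\ref{p_invarianceStandardPairs}); apply the inductive coupling hypothesis at that time; then, to restart, discretize the initial conditions at scale $\ve$ and use the regularity of the law of $\Beta_0$ in its starting point, namely $e^{-\Const\sqrt\ve}\le \bP(\Beta_0(t,\theta)\in I)/\bP(\Beta_0(t,\theta+\ve)\in I)\le e^{\Const\sqrt\ve}$, so that each standard pair $\ell_\alpha$ in the family can be coupled (via your single-block argument) to $\Beta_0$ started at $\theta_{\ell_\alpha}$ with an extra $\cO(\ve^{1/2-\zeta})$ error. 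The errors then add linearly over the $\cO(\ve^{-\alpha})$ blocks, giving $\cO(\ve^{1/2-\zeta-\alpha})=\cO(\ve^{1/2-\beta})$ for $\alpha\le\beta-\zeta$. Without this mechanism (or some substitute for it) your proof only establishes the lemma on $[\ve^{1/2000},T]$, not on $[\ve^{1/2000},\ve^{-\alpha}]$.
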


\begin{proof}
For brevity we call a coupling between two random variables $X,Y$ such that $\bP(|X-Y|\geq \ve)\leq\delta$ an $(\ve,\delta)$-coupling.

The proof is by induction, let us prove it first for $t\in[\ve^{1/2000},T]$.
We partition $\bR$ in bins $I_n$ of size $1$ centered around
$b_n=n$.  By Theorem \ref{thm:lclt} we have for any $\zeta>0$,
\[
\begin{split}
\mu_\ell(\theta_\ve(t,\cdot)\in\bar\theta(t,\thetasl) + \ve I_n) &= \int_{\ve I_n}
    \frac{e^{-y^2/(2\Var_t^2\ve)}}{\Var_t\sqrt{2\pi\ve}} dy+\cO(\ve^{1-\zeta/2})\\
    &=\bP(\Beta_0(t)\in \bar\theta(t,\thetasl) +  \ve I_n)+\cO(\ve^{1-\zeta/2}).
\end{split}
\]
We can thus construct a coupling of the part of measure that belongs to the same bins, and if we do it for all $n\leq \ve^{-1/2-\zeta/2}$, then we have a mass of order $\cO(\ve^{1/2-\zeta})$ that cannot be coupled. On the other hand by Theorem \ref{t_largeDevz} the total mass for both process that can belong to intervals with $n\geq \ve^{-1/2-\zeta/2}$ is also smaller than $\ve^{1/2-\zeta}$ (in fact much smaller). We have constructed an $(\ve, C\ve^{1/2-\zeta})$-coupling for some $C>0$, and the Lemma is thus proven for $t\in[\ve^{1/2000},T]$.

Next, let us assume that each $t\leq kT$ we can construct an $(\ve, 2kC\ve^{1/2-\zeta})$-coupling. In particular, the bound holds for $t\leq T_k= kT-\ve^{1/2000}$. Let $\{I_n\}$ be a partition of $\bT$ in intervals of size $\ve$.
We know that the standard pair $\ell$ will give rise, at time $T_k$, to a standard family $\stdf_k=(\alphaset_k,\cF_k,\fm_k)$ such that
\[
\mu_\ell(\theta_\ve(t,\cdot)\in I_n)=\sum_{\alpha\in\alphaset_k}\fm_\alpha\mu_{\ell_\alpha}(\theta_\ve(t-T_k,\cdot)\in I_n).
\]
Then
\[
\mu_\ell(\theta_\ve(t,\cdot)\in I_n)=\sum_j\sum_{\{\alpha\;:\; \theta_{\ell_\alpha}\in I_j\}}\fm_\alpha\mu_{\ell_\alpha}(\theta_\ve(t-T_k,\cdot)\in I_n).
\]
By the inductive hypothesis we can make an $(\ve, 2kC \ve^{1/2-\zeta})$-coupling with $\Beta_0(T_k)$.
On the other hand it is easy to check that, for each $I\subset \bR$ and $\theta\in\bT^1$,
\[
e^{-\Const \sqrt\ve}\leq\frac{\bP(\Beta_0(t,\theta)\in I)}{\bP(\Beta_0(t,\theta+\ve)\in I)}\leq e^{\Const \sqrt\ve}.
\]
Thus, for each $\theta_{\ell_\alpha}\in I_j$ we can $(\ve, C\ve^{1/2-\zeta})$-couple $\theta_\ve(t)$, under $\mu_{\ell_\alpha}$, with $\Beta_0(t,\theta_{\ell_\alpha})$. This clearly, produces an $(\ve, C(2k+1)\ve^{1/2-\zeta})$-coupling up to time $(k+1)T-\ve^{1/2000}$. Another step as before will allow to construct an $(\ve, 2C(k+1)\ve^{1/2-\zeta})$-coupling up to time $(k+1)T$.

We can iterate this procedure up to $k\leq \ve^{-\alpha}$ for $\alpha<\beta-\zeta$, and get an $(\ve,\cO(\ve^{1/2-\beta}))$-coupling. The Lemma is thus proven, taking $\zeta$ small enough.

\end{proof}

In fact, it is possible to couple our process to the following, more interesting,
Freidlin--Wentzell type equation \eqref{eq:f-w}
\[
\begin{split}
&\deh\Beta(t)=\bar\omega(\Beta(t))\deh t+\sqrt \ve\bVar(\Beta(t))\deh B\\
&\Beta(0)=\thetasl
\end{split}
\]
and $B$ is the standard Brownian motion.

Since $F_\ve\in\cC^5$, we can apply \cite[Theorem 8.1]{GL06} with the Banach spaces $\{\cC^i\}_{i=0}^s$, $s=4$, and obtain that $\bar\omega, \bVar\in\cC^{4-\alpha}$, for all $\alpha>0$. Thus \cite[Theorem
2.2]{WeFr} implies that $\Beta(t)=\Beta_0(t)+\ve\Beta_2(t)+\ve^{\frac 32}\Beta_r(t)$ where
\[
\begin{split}
&d\Beta_2(t)=\bar\omega'(\bar\theta(t))\Beta_2(t) dt +\frac 12 \bar\omega''(\bar\theta(t))\deviation(t)^2 dt+\bVar' (\theta(t))\deviation(t) dB(t)\\
&\Beta_2(0)=0,
\end{split}
\]
and
\[
\bE(\Beta_r(t)^2)\leq \Const .
\]
Hence, for each $\beta>0$,
\[
\bP(|\Beta_r(t)|\geq \ve^{-\beta})\leq \Const \ve^{2\beta}.
\]

Let us denote by $\deh_{TV}$ the total variation distance.

\begin{lem}\label{lem:second-step}
For any $\beta>0$, $\alpha\in (0,\beta)$ and $t\in[0,\ve^{-\alpha}]$ we have
\[
\deh_{TV}(\Beta(t),\Beta_0(t))=\cO( \ve^{1/2-\beta}).
\]
\end{lem}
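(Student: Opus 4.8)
The plan is to compare the two diffusions $\Beta(t)$ and $\Beta_0(t)$ directly through the decomposition already written down just before the statement. Recall that $\Beta(t)=\Beta_0(t)+\ve\Beta_2(t)+\ve^{3/2}\Beta_r(t)$, where $\Beta_2$ solves a linear SDE driven by $\deviation(t)$ and the same Brownian motion $B$, and $\bE(\Beta_r(t)^2)\leq\Const$. Since both $\Beta$ and $\Beta_0$ are built from the \emph{same} Brownian motion $B$, this decomposition furnishes a natural coupling: on this probability space, $|\Beta(t)-\Beta_0(t)|\leq \ve|\Beta_2(t)|+\ve^{3/2}|\Beta_r(t)|$ pointwise. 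The first task is therefore to obtain a moment bound $\bE(\Beta_2(t)^2)\leq \Const\,\expo{\const t}$ on the interval $t\in[0,\ve^{-\alpha}]$; this follows from the linear SDE for $\Beta_2$ by a Grönwall/Itô-isometry estimate, using that $\bar\omega',\bar\omega'',\bVar'\in\cC^{0}$ are bounded (they lie in $\cC^{3-\alpha}$ by \cite[Theorem 8.1]{GL06}) and that $\bE(\deviation(t)^4)\leq\Const\,\expo{\const t}$, which in turn comes from the explicit Gaussian formula $\deviation(t)=\int_0^t e^{\int_s^t\bar\omega'}\bVar\,\deh B$ and the variance estimate \eqref{e_varianceEstimate}. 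Combining, $\bP(\ve|\Beta_2(t)|+\ve^{3/2}|\Beta_r(t)|\geq\ve^{1-\beta/2})=\cO(\ve^{\beta})$ on the whole time range, so with this coupling the two processes differ by at most $\ve^{1-\beta/2}\ll\ve$ off an event of probability $\cO(\ve^{\beta})$.

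The second task is to convert this closeness-in-a-coupling (a Wasserstein-type bound of order $\ve$ in displacement) into the claimed \emph{total variation} bound of order $\ve^{1/2-\beta}$. Here the key auxiliary ingredient is a Lipschitz-type regularity of the law of $\Beta_0(t)$ at scale $\ve$: precisely, the estimate already displayed in the proof of Lemma~\ref{cor:first-step}, namely
\[
e^{-\Const\sqrt\ve}\leq\frac{\bP(\Beta_0(t,\theta)\in I)}{\bP(\Beta_0(t,\theta+\ve)\in I)}\leq e^{\Const\sqrt\ve}
\]
for every interval $I$, together with the fact (from the explicit Gaussian density of $\Beta_0$ and $\Var_t^2\geq\Const\, t\geq\Const\,\ve^{1/1000}$) that $\Beta_0(t)$ has a density bounded by $\Const\,\ve^{-1/2000}$, hence assigns mass $\cO(\ve^{1/2-\beta})$ to any set of diameter $\ve^{1-\beta/2}$. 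I would partition $\bT$ (or $\bR$) into intervals $I_n$ of length $\ve$ and compare $\bP(\Beta(t)\in I_n)$ with $\bP(\Beta_0(t)\in I_n)$: on the good event the $\Beta$-mass in $I_n$ is trapped, up to the $\cO(\ve^{\beta})$ bad mass, between the $\Beta_0$-masses of the $\ve$-neighbourhood of $I_n$ and of the $\ve$-core of $I_n$; by the displayed ratio bound these two differ from $\bP(\Beta_0(t)\in I_n)$ by a multiplicative factor $e^{\pm\Const\sqrt\ve}=1+\cO(\sqrt\ve)$, and since $\sum_n\bP(\Beta_0(t)\in I_n)=1$ the total discrepancy summed over $n$ is $\cO(\sqrt\ve)+\cO(\ve^{\beta})$, which is absorbed into $\cO(\ve^{1/2-\beta})$. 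Summing $|\bP(\Beta(t)\in I_n)-\bP(\Beta_0(t)\in I_n)|$ over $n$ gives the total variation bound.

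I expect the main obstacle to be the passage from the pathwise/coupling estimate to the total variation estimate, because total variation is genuinely sensitive to the singular part of the laws and cannot be bounded by a transport distance alone — one really needs the quantitative absolute-continuity (density bound) and the $\ve$-Lipschitz ratio estimate for the law of $\Beta_0(t)$, and one must check these remain effective down to $t\sim\ve^{1/2000}$ where $\Var_t^2$ is only polynomially small in $\ve$ (hence the density of $\Beta_0$ is at most polynomially large, which is what makes the argument work: a set of diameter $\ve^{1-\beta/2}$ still has tiny $\Beta_0$-mass). The moment bounds for $\Beta_2,\Beta_r$ over the long interval $[0,\ve^{-\alpha}]$ are routine Grönwall estimates producing at worst a factor $\expo{\const\ve^{-\alpha}}$... wait, that would be catastrophic, so one must instead exploit that the relevant constants $\bar\omega',\bar\omega''$ are bounded and in fact the linear coefficient $\bar\omega'(\bar\theta(t))$ does not force exponential blowup on the time scales considered once we only track $t\leq\ve^{-\alpha}$ with $\alpha$ small — more carefully, $\bE(\Beta_2(t)^2)\leq\Const\,t^3\expo{\const t}$ and on $t\leq\ve^{-\alpha}$ the prefactor is only $\ve^{-\Const\alpha}$, absorbed by choosing $\alpha$ small relative to $\beta$, exactly as in the iteration of Lemma~\ref{cor:first-step}. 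Bookkeeping these exponents so that everything closes with the stated $\cO(\ve^{1/2-\beta})$ is the part requiring care.
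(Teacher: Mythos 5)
Your proposal starts from the remainder expansion $\Beta=\Beta_0+\ve\Beta_2+\ve^{3/2}\Beta_r$ (which the paper mentions as a possible route but deliberately does not follow), and the decisive step — upgrading the resulting pathwise closeness to a \emph{total variation} bound — does not work as written. Summing $|\bP(\Beta(t)\in I_n)-\bP(\Beta_0(t)\in I_n)|$ over a fixed partition of $\bT$ into intervals of length $\ve$ does not bound $\deh_{TV}(\Beta(t),\Beta_0(t))$: total variation is the supremum over \emph{all} Borel sets, i.e.\ the $L^1$ distance of the densities, and two laws can agree on the mass of every $\ve$-interval while their densities (of size $\sim(\ve\Var_t^2)^{-1/2}\gg1$) oscillate against each other inside each interval, producing an $L^1$ discrepancy of order one. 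Your coupling gives a Wasserstein-type estimate, and your regularity inputs (density bound and $\ve$-shift ratio estimate) concern only $\Beta_0$; nowhere do you control the density of the Freidlin--Wentzell diffusion $\Beta(t)$ at scales below $\ve$, which is exactly what a TV bound requires. (A small slip in the same step: $\ve^{1-\beta/2}\gg\ve$, not $\ll\ve$.) The paper's proof supplies precisely this missing ingredient: it invokes Kifer's asymptotic expansion of the transition density of a small-noise diffusion, computes the rate function $V(t,y)=y^2/\Var_t^2+\cO(y^3)$ by linearising the Hamiltonian system around $(\bar\theta(s,\thetasl),0)$ and applying the implicit function theorem, and thereby compares the density of $\Beta(t)$ pointwise with the Gaussian density of $\Beta_0(t)$ up to a relative error $\Const\ve^{1/2-3\zeta}$ on $|y|\le\ve^{1/2-\zeta}$, which integrates directly to the TV estimate on $[0,T]$.

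The long-time part of your argument also has a genuine error: the bound $\bE(\Beta_2(t)^2)\le\Const\,t^3\expo{\const t}$ with $t\le\ve^{-\alpha}$ gives a prefactor $\expo{\const\ve^{-\alpha}}$, which is super-polynomial in $\vei$, not $\ve^{-\Const\alpha}$ as you claim; moreover the expansion of \cite{WeFr} with $\bE(\Beta_r(t)^2)\le\Const$ is itself only valid on a fixed bounded time interval. The correct mechanism (used both in the paper's proof of this lemma and in the iteration of Lemma~\ref{cor:first-step}) is to work on blocks of length $T$ and use the Markov property, letting TV errors of size $\cO(\ve^{1/2-3\zeta})$ accumulate additively over $k\le\ve^{-\alpha}$ blocks and then choosing $\alpha\le\beta-3\zeta$. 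But restructuring your argument this way still requires a per-block TV estimate, which brings you back to the gap above: without a quantitative density comparison for $\Beta(t)$ (via Kifer's theorem, Girsanov-type arguments at fixed time, or a smoothing argument with gradient bounds for both densities), the plan does not close.
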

\begin{proof}
Again we start by considering the time interval $[0,T]$ first.
It should be possible to prove the Lemma by using the above estimates for $\Beta_2,\Beta_r$, yet, we find faster to use the following result from \cite{Kifer76}:\footnote{ Related results are present in \cite{FS86}, where they are investigated from the point of view of viscosity solutions.} for each $t\in [0,T]$, let $p^\ve_t(\thetasl,\bar\theta(t,\thetasl)+y)$ be the distribution of the random variable $\Beta(t)$ determined by \eqref{eq:f-w}, then
\begin{equation}\label{eq:approx transition}
\left|p^\ve_t(\thetasl,\bar\theta(t,\thetasl)+y)-(2\pi\ve)^{-\frac 12}e^{-V(t,y)/2\ve}K_0(t,y)\right|\leq \Const\ve^{\frac12} e^{-V(t,y)/2\ve}
\end{equation}
where, setting $D_{y,t}=\{\vf\in\cC^1\st \vf(0)=\thetasl,\vf(t)=\bar\theta(t,\thetasl)+y\}$,
\begin{equation}\label{eq:infimumV}
V(t,y)=\inf_{\vf\in D_{y,t}}\int_0^t \frac{(\dot\vf(s)-\bar\omega(\vf(s))^2}{\bVar(\vf(s))^2}\deh s
\end{equation}
and $K_0\in\cC^1$. To compute $V$ note that the minimum is attained on the solution $\vf$ of the Euler-Lagrange equations.

To the Lagrangian is associated the Hamiltonian $\cH(\vf, p)=\frac{\bVar(\vf)^2}4p^2+p\,\bar\omega(\vf)$ where $p=2\bVar(\vf)^{-2}(\dot\vf-\bar\omega(\vf))$. The Hamiltonian equation of motions read
\begin{equation}\label{eq:hamilton}
\begin{split}
&\dot\vf=\partial_p \cH=\frac{\bVar(\vf)^2}2 p+\bar\omega(\vf)\\
&\dot p=-\partial_\vf\cH=-\frac{\bVar(\vf)\bVar'(\vf)}2p^2-\bar\omega'(\vf) p.
\end{split}
\end{equation}
Note that $(\vf(s), p(s))=(\bar\theta(s,\thetasl),0)$ is a solution of \eqref{eq:hamilton} with initial condition $\thetasl$ and final condition $\bar\theta(t,\thetasl)$, which corresponds to $y=0$.

We can linearise the equation of motion around the solution $(\bar\theta(s,\thetasl),0)$. Let $(\vf(t, p_0),p(t,p_0))$ be the solution of \eqref{eq:hamilton} with  initial conditions $(\thetasl,p_0)$ and set $(\xi(t),\eta(t))=\partial_{p_0}(\vf(t, p_0),p(t,p_0)|_{p_0=0}$. Then
\[
\begin{split}
&\dot\xi=\frac{\bVar(\theta)^2}2 \eta+\bar\omega'(\bar\theta)\xi\\
&\dot \eta=-\bar\omega'(\bar\theta) \eta\\
&\xi(0)=0\,,\quad \eta(0)=1,
\end{split}
\]
It readily follows
\[
\begin{split}
&\eta(s)=e^{-\int_0^s\bar\omega'(\bar\theta(s_1),\thetasl)ds_1}\\
&\xi(s)=e^{\int_0^s\bar\omega'(\bar\theta(s_1),\thetasl)ds_1}\int_0^{s} e^{-2\int_0^{s_1}\bar\omega'(\bar\theta(s_2),\thetasl)ds_2}
\frac{\bVar(\bar\theta(s_1,\thetasl))^2}2 ds_1\\
&\phantom{\xi(s)}=\frac{\Var_s(\thetasl)^2}2
e^{-\int_0^s\bar\omega'(\bar\theta(s_1),\thetasl)ds_1}.
\end{split}
\]
Also note that, since $\bar\omega\in\cC^3$ (see discussion after \eqref{eq:averageeq}) also $p(t,\cdot)\in\cC^3$.
If we want the solution belonging to $D_{y,t}$, then we have to solve the equation $F(p_0,y)=\vf(t,p_0)-\bar\theta(t,\thetasl)-y=0$. Since $\partial_{p_0}F=\xi(t)\neq 0$, we can apply the implicit function theorem in a neighborhood of $(0,0)$ and conclude that the minimum in \eqref{eq:infimumV} is obtained for the solution of \eqref{eq:hamilton} with initial conditions
\[
\vf(0)=\thetasl\;;\quad p_0(y)=\xi(t)^{-1}y+\cO(y^2),
\]
where $p_0\in\cC^3$.
Moreover, since we have
\[
V(t,y)=\int_0^t \frac{p(s,p_0(y))^2\bVar(\vf(s, p_0(y))))^2}{2}\deh s,
\]
the above shows that $V(t,\cdot)\in\cC^3$ in a neighborhood of zero and allows to compute
\[
\begin{split}
&V(t,0)=0\;;\quad \partial_y V(t,0)=0\\
&\partial_y^2V(t,0)=\int_0^t\eta(s)^2\xi(t)^{-2}\bVar(\bar\theta(s,\thetasl))^2=2 \Var_t(\thetasl)^{-2},
\end{split}
\]
which, by Taylor expansion, yields
\begin{equation}\label{eq:approx V}
V(t,y)=\frac{y^2}{\Var_t^2}+\cO(y^3).
\end{equation}
This, together with the large deviation results in \cite{WeFr}, shows in particular that
\[
\int_{|y|\geq \ve^{1/2-\zeta}}p^\ve_t(\thetasl,\bar\theta(t,\thetasl)+y)\leq \ve^{100}.
\]
It suffices thus to consider $|y|\leq \ve^{1/2-\zeta}$, for which we have
\begin{equation}\label{eq:approxtransition2}
\left|p^\ve_t(\thetasl,\bar\theta(t,\thetasl)+y)-(2\pi\ve\Var_t^2)^{-\frac 12}e^{-\frac{y^2}{2\ve\Var_t^2}}\right|\leq \Const\ve^{1/2-3\zeta} e^{-\frac{y^2}{2\ve\Var_t^2}}.
\end{equation}
This proves the statement for $t\leq T$. To prove the result for longer times, we proceed by recurrence, relying on Markov Property: after each time interval of length $T$ there is an additional amount of mass of order $\cO(\ve^{1/2-3\zeta})$ that cannot be coupled, which means that $\deh_{TV}(\Beta(t),\Beta_0(t))=\cO( k\ve^{1/2-3\zeta})$ for $t\in[kT,(k+1)T]$.
We deduce the result, taking $\alpha\leq \beta-3\zeta$ and $\zeta$ small.

\end{proof}
By Lemmata \ref{cor:first-step} and \ref{lem:second-step} immediately follow
\begin{cor}\label{cor:third-step}
For any $\beta>0$, $\alpha\in(0,\beta)$, $\ve\in (0,\ve_0)$, standard pair $\ell$ and $t\in[\ve^{1/2000},\ve^{-\alpha}]$, there exists a coupling $\bP_c$ between $\theta_\ve(t)$, under $\mu_\ell$, and $\Beta(t)$, such that:
\[
\bP_c(|\theta_\ve(t)-\Beta(t)|\geq \ve)=\cO(\ve^{1/2-\beta}).
\]
\end{cor}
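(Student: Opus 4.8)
My plan is to deduce the statement from the triangle inequality after realising the couplings of Lemmata \ref{cor:first-step} and \ref{lem:second-step} on a single probability space. First I would fix $\beta>0$, $\alpha\in(0,\beta)$, $\ve\in(0,\ve_0)$, a standard pair $\ell$ and $t\in[\ve^{1/2000},\ve^{-\alpha}]$, and invoke Lemma \ref{cor:first-step} to obtain a coupling of $\theta_\ve(t)$, under $\mu_\ell$, with $\Beta_0(t,\thetasl)$ for which $|\theta_\ve(t)-\Beta_0(t,\thetasl)|\ge\ve$ has probability $\cO(\ve^{1/2-\beta})$. Then, using Lemma \ref{lem:second-step} together with the characterisation of the total variation distance by maximal couplings, I would take a coupling of $\Beta_0(t,\thetasl)$ with $\Beta(t)$ for which $\Beta_0(t,\thetasl)\ne\Beta(t)$ has probability $\deh_{TV}(\Beta(t),\Beta_0(t))=\cO(\ve^{1/2-\beta})$.

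Next I would glue these two couplings along their shared marginal $\Beta_0(t,\thetasl)$: since $\Beta_0(t,\thetasl)$ takes values in $\bT$, a standard Borel space, regular conditional probabilities exist and the gluing lemma produces a probability $\bP_c$ carrying a triple $(\theta_\ve(t),\Beta_0(t,\thetasl),\Beta(t))$ whose first two coordinates are distributed as in the first coupling and whose last two coordinates are distributed as in the second. Dropping the auxiliary coordinate $\Beta_0$ then gives the desired coupling of $\theta_\ve(t)$ and $\Beta(t)$. To conclude, I would note that on the event $\{\Beta_0(t,\thetasl)=\Beta(t)\}$ one has $|\theta_\ve(t)-\Beta(t)|=|\theta_\ve(t)-\Beta_0(t,\thetasl)|$, so that
\[
\{|\theta_\ve(t)-\Beta(t)|\ge\ve\}\subseteq\{|\theta_\ve(t)-\Beta_0(t,\thetasl)|\ge\ve\}\cup\{\Beta_0(t,\thetasl)\ne\Beta(t)\},
\]
and a union bound yields $\bP_c(|\theta_\ve(t)-\Beta(t)|\ge\ve)\le\cO(\ve^{1/2-\beta})+\cO(\ve^{1/2-\beta})=\cO(\ve^{1/2-\beta})$, which is exactly the asserted estimate.

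I do not expect any genuine obstacle here: the entire content is the gluing lemma for couplings plus the triangle inequality, and the only point deserving an explicit sentence is that $\Beta_0(t)$ lives in a standard Borel space, so that the two couplings may be realised simultaneously. If one preferred to bypass the abstract gluing step, one could instead build the composite coupling by hand on $\ve$-bins, exactly as in the proof of Lemma \ref{cor:first-step}, using that both $\Beta_0(t)$ and $\Beta(t)$ have near-Gaussian densities; this would give the same conclusion.
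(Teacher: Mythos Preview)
Your proposal is correct and is exactly the argument the paper has in mind: the paper gives no proof at all beyond the line ``By Lemmata \ref{cor:first-step} and \ref{lem:second-step} immediately follow'', and your gluing of the two couplings along the common marginal $\Beta_0(t,\thetasl)$ is the standard way to make that sentence precise.
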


\section{Long times}\label{sec:f-w}

We have thus seen that our deterministic process remains very close, in a precise technical sense, to the Freidlin-Wentzell process \eqref{eq:f-w} for a polynomially long time. To take advantage of this it is necessary to have a good understanding of the statistical properties of \eqref{eq:f-w}. To this issue is devoted the present section.

\medskip

First of all, note that the generator associated to the process can be written as
\begin{equation}\label{eq:generator}
\begin{split}
&L_\ve\vf=\bar\omega\vf'+\frac \ve 2\bVar^2\vf''=\frac{\ve}{2 \rho_\ve}(\bVar^2\rho_\ve\vf'-Z_\ve\drifte \vf)',\\
&\rho_\ve(\theta)=Z_\ve\bVar^{-2}e^{-\vei\Omega(\theta)}\left[1+\drifte\int_{0}^\theta e^{\vei\Omega(s)} ds \right],\\
&\Omega(\theta)=-2\int_{0}^\theta\frac{\bar\omega(s)}{\bVar^2(s)}ds\,\textrm{ for all }\theta\in \bR,
\end{split}
\end{equation}
where $\drifte\in\bR$ is determined by the relation $\rho_\ve(0)=\lim_{\theta\to1}\rho_{\ve}(\theta)$, which insures that $\rho_\ve$ is a smooth periodic function and hence the measure factors properly on $\bT$, while the normalisation constant $Z_\ve$ is determined by $\int_{\bT}\rho_\ve=1$.
Accordingly,
\begin{equation}\label{eq:drifte}
\drifte=\frac{e^{\vei\Omega(1)}-1}{\int_{0}^{1}e^{\vei\Omega(s)}ds},
\end{equation}
which shows that $\drifte=0$ if and only if $\int_{\bT}\frac{\bar\omega}{\bVar^2}=0$.
Also, note that,
\[
\begin{split}
1+\drifte\int_{0}^\theta e^{\vei\Omega(s)} ds &=\frac {\int_{0}^{1}e^{\vei\Omega(s)}ds+\left[e^{\vei\Omega(1)}-1\right]\int_{0}^\theta e^{\vei\Omega(s)} ds }{\int_{0}^{1}e^{\vei\Omega(s)}ds}\\
&=\frac {\int_{\theta}^{\theta+1}e^{\vei\Omega(s)}ds }{\int_{0}^{1}e^{\vei\Omega(s)}ds},
\end{split}
\]
and thus $\rho_\ve$ can also be written as
\begin{equation}\label{eq:integral form}
\begin{split}
\rho_\ve(\theta)&= \frac{\int_\theta^{\theta+1}\bVar^{-2}(\theta)e^{-\ve^{-1}(\Omega(\theta)-\Omega(s))} ds}
{ \int_0^1\int_\theta^{\theta+1}\bVar^{-2}(\theta)e^{-\ve^{-1}(\Omega(\theta)-\Omega(s))} ds\,d\theta} \\&
=
\tilde Z_\ve
\int_\theta^{\theta+1}\bVar^{-2}(\theta)e^{-\ve^{-1}(\Omega(\theta)-\Omega(s))} ds
.
\end{split}
\end{equation}
One can easily check that $L_\ve'\rho_\ve=0$. That is, $\rho_\ve$  is the density of the invariant measure $\nu_\ve$
of \eqref{eq:f-w}. In addition, if $\drifte=0$, then the process is reversible.

\medskip

Let us first consider the case when $\bar \omega$ has no zeros. We suppose $\bar \omega>0$ (the negative case is obtained by symmetry). Then for a constant $c$ big enough, since $\Omega(\theta)-\Omega(s)\geq \const (s-\theta)$ for $s>\theta$,
\[
\begin{split}
\int_\theta^{\theta+1}\bVar^{-2}(\theta)e^{-\ve^{-1}(\Omega(\theta)-\Omega(s))} ds&=\int_\theta^{\theta+c\ve|\log \ve|}\bVar(\theta)^{-2}e^{-\ve^{-1}(\Omega(\theta)-\Omega(s))} ds+O(\ve^2)\\
&=\bVar(\theta)^{-2}\int_0^{c\ve|\log \ve|}e^{\ve^{-1}\Omega'(\theta)s}d s+O(\ve^2|\log \ve|^2)\\
&=-\ve \bVar(\theta)^{-2}(\Omega')(\theta)^{-1}+O(\ve^2|\log \ve|^2)\\
&= \frac{\ve}{2} \bar \omega(\theta)^{-1} +O(\ve^2|\log \ve|^2).
\end{split}
\]
We deduce, after normalization, that $\rho_\ve(\theta)=Z\bar \omega(\theta)^{-1}+O(\ve|\log\ve|^2)$, for some $Z\in\bR_+$. We have the following result of convergence toward $\nu_\ve$.

\begin{lem}\label{lem:conver-no-zero} Suppose that $\bar \omega$ has no zeros. There exists $\const$ and $\Const$ such that for all $\theta \in \bT$,
$$
\deh_{TV}( p_t^\ve(\theta,\cdot),\nu_\ve(\cdot))\leq \Const e^{-\const \ve t}.
$$
\end{lem}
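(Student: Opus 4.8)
The plan is to reduce the statement to a uniform Doeblin minorization for the process \eqref{eq:f-w} on the time scale $t_0=c_0\vei$, and then to invoke the classical geometric-ergodicity argument. Concretely, I would establish that there are constants $c_0>0$ and $\delta\in(0,1)$, independent of $\ve$, such that $p^\ve_{t_0}(\theta,\cdot)\geq\delta\,\nu_\ve(\cdot)$ for every $\theta\in\bT$, where $t_0=c_0\vei$. Granting this, the standard coupling inequality gives $\deh_{TV}(p^\ve_{nt_0}(\theta,\cdot),\nu_\ve)\leq(1-\delta)^n$ for every $n$; since the total variation distance is non-increasing along a Markov semigroup, for arbitrary $t$ one takes $n=\lfloor t/t_0\rfloor$ and obtains $\deh_{TV}(p^\ve_t(\theta,\cdot),\nu_\ve)\leq(1-\delta)^{\lfloor t/t_0\rfloor}\leq\Const\,e^{-\const\,t/t_0}=\Const\,e^{-\const\ve t}$, as claimed. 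Observe that this scheme uses no reversibility, so it is insensitive to the value of the circulation constant $\drifte$ in \eqref{eq:drifte}.

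The whole content is thus the minorization. The mechanism is that, when $\bar\omega$ has no zero, the averaged flow of \eqref{eq:averageeq} is a circle rotation with some period $P>0$ and $\oint(\log\bar\omega)'=0$, so $\int_s^t\bar\omega'(\bar\theta(r,\theta))\deh r$ remains bounded uniformly in $t$; hence in \eqref{e_variancelclt} the variance grows only \emph{linearly}, $\const\,t\leq\Var_t^2(\theta)\leq\Const\,t$ (in contrast with the general bound \eqref{e_varianceEstimate}), and the Gaussian width $\sqrt{\ve\Var_t^2}$ of the transition kernel of \eqref{eq:f-w} becomes of order one precisely at $t\asymp\vei$. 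To make this quantitative I would pass to the rescaled generator $\widetilde L_\ve:=\vei L_\ve=\vei\bar\omega\,\partial_\theta+\tfrac12\bVar^2\partial_\theta^2$, whose time-$s$ kernel $\widetilde p_s$ satisfies $\widetilde p_s=p^\ve_{s/\ve}$, so that $p^\ve_{t_0}=\widetilde p_{c_0}$; then change the space variable to $\eta=g(\theta)$ with $g(\theta)=P\invr\int_0^\theta\bar\omega(u)\invr\deh u$, which is a diffeomorphism of $\bT$ since $g(\theta+1)=g(\theta)+1$ and which normalises the leading drift of $\widetilde L_\ve$ to the constant $(\ve P)\invr$ up to a bounded correction; finally subtract this constant by moving to the frame co-rotating at speed $(\ve P)\invr$. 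What is left is a uniformly parabolic equation on $\bT$ with coefficients bounded above and below uniformly in $\ve$ — here assumption \ref{a_noCobo}, which gives $\bVar$ bounded away from $0$, is essential — run for the fixed time $c_0$. The classical Aronson/Nash lower bound then yields $\widetilde p_{c_0}\geq c\,c_0^{-1/2}e^{-C/c_0}$, which is $\geq\delta'>0$ uniformly once $c_0$ is taken large, and undoing the bi-Lipschitz change of variables together with the fact that $\rho_\ve=Z\bar\omega\invr+O(\ve|\log\ve|^2)$ is bounded above and below uniformly in $\ve$ (cf. the computation preceding the statement, or \eqref{eq:integral form}) gives the asserted bound $p^\ve_{t_0}(\theta,\cdot)\geq\delta\,\nu_\ve(\cdot)$.

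I expect the uniform-in-$\ve$ minorization at time $c_0\vei$ to be the only real difficulty: one cannot simply iterate the fixed-time Gaussian estimate \eqref{eq:approx transition} of \cite{Kifer76} (or Theorem \ref{thm:lclt}) of order $\vei$ times, since the multiplicative errors would compound catastrophically, so the estimate must be produced directly on the $\vei$ time scale — which is exactly what the rescaling to a bounded-coefficient parabolic problem over an $O(1)$ time interval accomplishes. The remaining ingredients — the change of space variable, the passage to the co-rotating frame, the comparison of $\nu_\ve$ with Lebesgue, and the final geometric-ergodicity bookkeeping — are all routine.
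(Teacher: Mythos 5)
Your argument is correct, but it takes a genuinely different route from the paper. The paper reduces the lemma, ``by standard coupling arguments'', to showing that two independent solutions of \eqref{eq:f-w} meet before a time of order $\vei$ with probability bounded away from zero uniformly in $\ve$; this is done (following \cite{GPS}) by passing to the isochron variable $h$ with $h'=\bar\omega^{-1}$, so that the phase process has constant drift plus $\cO(\ve)$ corrections, and then using Doob/Burkholder--Davis--Gundy estimates to show that over the time $\lfloor\vei\rfloor T$ the phase fluctuation converges to a nondegenerate Gaussian, whence two independent copies cross. Your change of variable $g(\theta)=P\invr\int_0^\theta\bar\omega(u)\invr\deh u$ is exactly this isochron map (normalised by the period), but where the paper extracts an order-one Gaussian spread of the phase and concludes by crossing of independent copies, you instead extract a uniform Doeblin minorization $p^\ve_{t_0}(\theta,\cdot)\geq\delta\,\nu_\ve$ at $t_0=c_0\vei$ from Aronson/Nash lower bounds for the rescaled, rectified, co-rotated equation, and then run the standard geometric-ergodicity iteration. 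This buys a cleaner quantitative statement (an explicit regeneration, no appeal to a distributional limit, no reversibility, and correct rate by pure bookkeeping), at the price of importing parabolic-PDE machinery in place of the paper's self-contained stochastic-calculus estimates; both arguments use \ref{a_noCobo} for uniform ellipticity of $\bVar$ and the no-zero hypothesis to rectify the drift. One point you should make explicit: after passing to the co-rotating frame the coefficients oscillate in time at rate $\vei$, so their time derivatives are not bounded uniformly in $\ve$; this is harmless because Aronson's two-sided Gaussian bounds require only measurability in time and sup/ellipticity bounds (plus, to pass from non-divergence to divergence form in one dimension, a spatial Lipschitz bound on $\bVar^2$, which is uniform in $\ve$), but as written your appeal to ``bounded coefficients'' glosses over this.
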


\begin{proof}
By standard coupling arguments, it is sufficient to prove that two solutions of \eqref{eq:f-w} with respect to two independent Brownian motions and starting from $x$ and $x'$ meet before a time of order $\ve^{-1}$ with a probability bounded from zero independently from $\ve$ and uniformly in $x$ and $x'$. We follow here arguments developed in \cite{GPS} in a more general context.
We denote by $h$ the isochron map associated to the periodic solution $\bar \theta (t,\theta_0)$ for a $\theta_0\in \mathbb{T}^1$, that is, denote $T$ its period, the mapping from $\mathbb{T}^1$ to $\bR /T \bZ$ satisfying $h(\theta_0)=0$ and $h'(\theta)=\bar \omega^{-1}(\theta)$. We have in particular $h( \bar \theta (t,\theta_0))=t\, \text{mod}\,  T$.

We study then the process $(\Psi_t)_{t\geq 0}$ defined as the lift of $h(\Beta_t)$ (i.e. the unique $\bR$-valued trajectory satisfying $\Psi_0\in [0,T)$ and $h(\Beta_t)=\Psi_t\text{ mod }T$ for all $t\geq 0$). $\Psi_t$ satisfies
\begin{equation}\label{eq:eds Psi}
\deh \Psi_t=\deh t - \ve \frac{\bar \omega' \bVar^2}{2 \bar \omega^2}\circ h^{-1}(\Psi_t)\deh t +\sqrt{\ve}\frac{\bVar}{\bar \omega}\circ h^{-1}(\Psi_t)\deh B_t,
\end{equation}
where, for $u\in \bR$, $h^{-1}(u)$ is to be understood as $h^{-1}(u\text{ mod }T)$.
Now if $T$ denotes the period of the deterministic dynamics $\bar \theta$ defined by \eqref{eq:averageeq}, we have with probability converging to $1$ when $\ve$ goes to $0$ that for $\zeta>0$ small,
\[\max_{n\leq \ve^{-1}} \sup_{t\in [nT,(n+1)T]}|\Psi_t-(\Psi_{nT}+t-nT)|\leq \ve^{\frac12-\zeta}.
\]
Indeed from the Burkholder-Davis-Gundy inequality we get for $m\geq 1$
\begin{multline*}
\bP\left(\sup_{t\in[nT,(n+1)T]}\left| \int_{nT}^{t}\frac{\bVar}{\bar \omega}\circ h^{-1}(\Psi_t)\deh B_t\right|\geq \frac12 \ve^{-\zeta}\right)
\\\leq 2^m\ve^{m\zeta}\bE\left[\sup_{t\in[nT,(n+1)T]}\left| \int_{nT}^{(n+1)T}\frac{\bVar}{\bar \omega}\circ h^{-1}(\Psi_t)\deh B_t\right|^m\right]
\leq C_m \ve^{m\zeta},
\end{multline*}
and we can simply choose $m\zeta >1$. So with probability converging to one at each step of size $T$ the third term in \eqref{eq:eds Psi} gives a contribution of order $\frac12 \ve^{\frac12-\zeta}$, and the second term gives an even smaller contribution (of order $\ve$).

Remark that the functions $u\mapsto \frac{\bar \omega' \bVar^2}{2 \bar \omega^2}\circ h^{-1}(u)$ and $u\mapsto \frac{\bVar}{\bar \omega}\circ h^{-1}(u)$ are $T$ periodic. We denote $v=\int_0^T \frac{\bar \omega' \bVar^2}{2 \bar \omega^2}\circ h^{-1}(u)du$ and $\kappa = \int_0^T \left(\frac{\bVar}{\bar \omega}\circ h^{-1}\right)^2(u)du$. From the above estimates, we deduce that
\begin{equation}\label{eq:estim Psi}
\Psi_{nT}-nT-\Psi_0= \ve n v +\ve^{\frac12}G_{nT}+\ve\int_0^{nT}b^\ve_t\deh t+\sqrt \ve \int_0^{nT}\gamma_t^\ve\deh B_t,
\end{equation}
where $G_{nT}$ is a random variable of normal distribution, centered and with variance $n \kappa$, and for $kT\leq t\leq (k+1)T$ with $k\geq n-1$,
\[
\begin{split}
&b_t^\ve=\frac{\bar \omega' \bVar^2}{2 \bar \omega^2}\circ h^{-1}(\Psi_t)-\frac{\bar \omega' \bVar^2}{2 \bar \omega^2}\circ h^{-1}(\Psi_{kT}+t)=\cO(\ve^{\frac12-\zeta}),\\
& \gamma_t^\ve=\frac{\bVar}{\bar \omega}\circ h^{-1}(\Psi_t)-\frac{\bVar}{\bar \omega}\circ h^{-1}(\Psi_{kT}+t)=\cO(\ve^{\frac12-\zeta}).
\end{split}
\]
Using similar estimates as above, we can prove that the two last terms of \eqref{eq:estim Psi} are of order $\cO(\ve^{\frac12-\zeta})$ with probability converging to $1$, and thus $\Psi_{\lfloor\ve^{-1}\rfloor T}-\lfloor \ve^{-1} \rfloor T-\Psi_0$ converges in distribution to a Gaussian with mean $v$ and variance $\kappa$. This implies indeed that two independent solutions of \eqref{eq:f-w} meet before $\ve^{-1}T$ with a positive probability independent from $\ve$.

\end{proof}

The above result shows that the convergence to equilibrium takes place on a rather long time scale. As we will see in Section \ref{sec:rotation}, with the available technology, this allows only a partial understanding of the properties of the physical measures.

Next we consider the case when $\bar \omega$ admits $2\zeroes$ non-degenerates zeroes, $\zeroes>0$. In such a case the true convergence to equilibrium takes place to an even longer time scale, yet the convergence to a metastable situation takes place much faster.

Let us denote $\theta_{i,-}$ the stable zeroes, for $i=1,\ldots,\zeroes$, and $\theta_{i,+}$ the unstable ones.
We aim at proving the following Proposition.

\begin{prop}\label{prop: close to gaussians}
Suppose that the dynamics defined by \eqref{eq:averageeq} admits $2\zeroes$ non-degenerate fixed points. Then there exists a constant $\const$ such that for all $x$ and all $\alpha>0$ there exists non-negative real numbers $c_1(x),\ldots,c_\zeroes(x)$ satisfying $c_1(x)+\ldots+ c_\zeroes(x)=1$ such that
\begin{equation}
\sup_{t\in [\const |\log \ve|,\ve^{-\alpha}]}\deh_{TV}\left( p_t^\ve(x,\cdot),\sum_{i=1}^\zeroes c_i(x)\cG_i^{\ve}(\cdot)\right) =\cO\left( \ve|\log\ve|^{\frac32}\right),
\end{equation}
where $\cG_i^{\ve}$ is a gaussian distribution with mean $\theta_{i,-}$ and variance $\frac{ \bVar^2(\theta_{i,-})}{2 \bar\omega'(\theta_{i,-})}\ve$.
\end{prop}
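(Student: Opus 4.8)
The plan is to exploit the explicit form \eqref{eq:integral form} of the invariant density together with the two widely separated time scales hidden in \eqref{eq:f-w}: relaxation inside a potential well, which takes time $\cO(|\log\ve|)$, versus transition between wells, which takes time $\exp(\const\vei)$. I would partition $\bT$ into the open basins of attraction $\cB_i$ of the stable equilibria $\theta_{i,-}$ of \eqref{eq:averageeq}, separated by the unstable equilibria $\theta_{i,+}$, and for each $i$ fix a small but $\ve$-independent neighbourhood $U_i\ni\theta_{i,-}$ on which $\bar\omega'<-\const<0$. The first goal is to show that after a time $t_0=\const|\log\ve|$, with $\const$ large but independent of $x$ and $\ve$, the law $p_{t_0}^\ve(x,\cdot)$ is, up to a mass $\cO(\ve^{100})$, supported on $\bigcup_i U_i$; I would then define $c_i(x)$ to be the $U_i$-mass. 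The second goal is to show that, conditionally on $U_i$, the law is $\cO(\ve|\log\ve|^{3/2})$-close in total variation to $\cG_i^\ve$ and stays so for all $t\in[t_0,\ve^{-\alpha}]$.

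\textbf{Descent phase, uniformly in $x$.} Away from the unstable equilibria a solution of \eqref{eq:f-w} tracks the deterministic orbit $\bar\theta(t,x)$: writing $\Beta_t=\bar\theta(t,x)+\Delta_t$, linearising, and using the Burkholder--Davis--Gundy and Gronwall estimates exactly as in the proof of Lemma \ref{lem:conver-no-zero}, one gets $\sup_{t\le t_0}|\Delta_t|\le\ve^{1/2-\zeta}$ with probability $1-\cO(\ve^{100})$, so any $x$ at an $\cO(1)$ distance from the separatrices lands in some $U_i$ by time $\const|\log\ve|$. The delicate case is $x$ close to, or equal to, an unstable point $\theta_{j,+}$, where the deterministic escape time $\const|\log\dist(x,\theta_{j,+})|$ is not bounded by $\const|\log\ve|$ uniformly. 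Here I would use the noise: since $\bVar$ is bounded below, from any starting point the diffusion reaches distance $\gtrsim\sqrt\ve$ from $\theta_{j,+}$ within time $\cO(1)$ with probability bounded below, and once at such a distance the linear instability expels it past $\partial U_j$ in an additional time $\tfrac{1}{2\bar\omega'(\theta_{j,+})}|\log\ve|+\cO(1)$; the side on which it exits fixes the basin it then descends into. Iterating over the finitely many unstable points yields, for an $x$-independent $\const$, that $p_{t_0}^\ve(x,\cdot)$ charges $\bigcup_i U_i$ with total mass $1-\cO(\ve^{100})$, and I define $c_i(x)$ as the $U_i$-mass (equivalently, the probability that the diffusion started at $x$ is absorbed into well $i$ before performing any well-to-well transition); then $c_i(x)\ge0$ and $\sum_i c_i(x)=1+\cO(\ve^{100})$, which I renormalise.

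\textbf{Local equilibrium.} On $U_i$ the scale function $\Omega$ from \eqref{eq:generator} has a non-degenerate minimum at $\theta_{i,-}$ with $\Omega''(\theta_{i,-})=-2\bar\omega'(\theta_{i,-})\bVar^{-2}(\theta_{i,-})>0$, so by \eqref{eq:integral form} the invariant density restricted to $U_i$ and renormalised equals, after the substitution $\theta=\theta_{i,-}+\sqrt\ve u$, a constant times $\bVar^{-2}(\theta_{i,-}+\sqrt\ve u)\exp\bigl(-\tfrac12\Omega''(\theta_{i,-})u^2+\cO(\sqrt\ve|u|^3)\bigr)$; Taylor expanding the prefactor and the cubic remainder and estimating the resulting Gaussian moments on the effective window $|u|\le\const\sqrt{|\log\ve|}$ (outside of which the mass is $\cO(\ve^{100})$) shows this density is $\cO(\ve|\log\ve|^{3/2})$-close in $L^1$ to $\cG_i^\ve$. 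It remains to check that the process started anywhere in $U_i$ relaxes to this conditional equilibrium on the whole window: it stays in $U_i$ up to time $\exp(\const\vei)\gg\ve^{-\alpha}$ with probability $1-\exp(-\const\vei)$ (a standard exit estimate, readable off $\Omega$), and inside $U_i$, after rescaling $\theta\mapsto(\theta-\theta_{i,-})/\sqrt\ve$, it is an $\cO(\sqrt\ve)$-perturbation of the Ornstein--Uhlenbeck process with drift $\bar\omega'(\theta_{i,-})(\cdot)$ and unit diffusion, which contracts at the $\ve$-independent rate $|\bar\omega'(\theta_{i,-})|$; hence, $\const$ being large, at every $t\in[t_0,\ve^{-\alpha}]$ the conditional law of $\Beta_t$ given $\{\Beta_t\in U_i\}$ is within $\cO(\ve|\log\ve|^{3/2})$ of $\cG_i^\ve$ in total variation.

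\textbf{Assembling and the main obstacle.} For $t\in[t_0,\ve^{-\alpha}]$ the probability that the diffusion has moved between wells is at most $\ve^{-\alpha}\exp(-\const\vei)=\cO(\ve^{100})$, so the weights $c_i(x)$ are constant over the whole window up to that error; combining this with the two previous steps gives $\sup_{t\in[t_0,\ve^{-\alpha}]}\deh_{TV}\bigl(p_t^\ve(x,\cdot),\sum_i c_i(x)\cG_i^\ve\bigr)=\cO(\ve|\log\ve|^{3/2})$, which is the assertion. I expect the hard part to be the uniformity in $x$ of the descent time: controlling initial conditions arbitrarily close to the unstable separatrices requires genuinely using the regularising effect of the $\sqrt\ve$-noise to keep $\const$ independent of $x$, and one must be careful that the quasi-stationary approximation is tight enough to reach the stated order $\ve|\log\ve|^{3/2}$ rather than a cruder power of $\ve$. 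The remaining ingredients — the BDG/Gronwall tracking, the exit estimate, the Ornstein--Uhlenbeck contraction, and the Laplace-type expansion of \eqref{eq:integral form} — are standard.
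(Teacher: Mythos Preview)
Your overall architecture---descent into the wells in time $\cO(|\log\ve|)$, Laplace expansion of the well-restricted equilibrium to get the $\cO(\ve|\log\ve|^{3/2})$ Gaussian error, and Freidlin--Wentzell exit estimates to freeze the weights $c_i(x)$ over $[t_0,\ve^{-\alpha}]$---is exactly the structure of the paper's proof, including the case analysis for initial data near the unstable points (the paper's Lemma~\ref{lem:trapped} does the same three-region split).

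The one substantive difference is how you obtain convergence to the local equilibrium inside a well. The paper does not work with the original process conditioned on $U_i$; instead it extends $-\bar\omega$ near $\theta_{i,-}$ to a globally uniformly convex potential $f_i$ on $\bR$, couples the trapped process with the resulting auxiliary diffusion, and then proves $\deh_{TV}(P^i_t(x,\cdot),\pi_i^\ve)\le\Const\ve^{-1}e^{-\const t}$ via the Hairer--Mattingly/Harris theorem, using $f_i$ as Lyapunov function and the Kifer density expansion \eqref{eq:approx transition} for the small-set condition. Your route---rescale and view the process as an $\cO(\sqrt\ve)$-perturbation of Ornstein--Uhlenbeck---is morally correct but, as stated, not uniform: at the edge of $U_i$ the rescaled variable has size $\cO(\ve^{-1/2})$, so the nonlinear correction to the drift is $\cO(\sqrt\ve\,u^2)=\cO(\ve^{-1/2})$, not $\cO(\sqrt\ve)$, and in any case TV contraction for a perturbation of OU is not automatic. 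What actually makes your argument work is that $\bar\omega'\le-\const$ throughout $U_i$, which gives Wasserstein contraction by synchronous coupling and can then be upgraded to TV using a density bound; this is precisely what Harris recurrence packages. The paper's detour through the globally convex extension on $\bR$ buys exactly this: a clean Lyapunov inequality and an honest small set, with no need to control a conditioned process or worry about the perturbation size near $\partial U_i$.
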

\begin{rem} Note that, when starting from a standard pair, the initial conditions can belong, at most, to two sinks, hence all the $c_i$ are zero a part from, at most, two. However, if one considers an initial condition described by a standard family, then all the $c_i$ can be strictly positive.
\end{rem}
We will not investigate the optimal constant $\const$ given in this Proposition, as we are interested in longer times. For results in this direction and related to the cut-off phenomena, see \cite{BarJar}.

For $a>0$ denote $I^a_i=[\theta_{i,-}-a,\theta_{i,-}+a]$. Since $\bar \omega$ is smooth, there exists a $a>0$ such that, on each interval $I_i$, $\bar \omega$ is uniformly convex, with $\bar \omega''(\theta) \geq \const$ for $\theta\in I_i$. We denote $I^a=\cup_{i=1}^n I^a_i$, and first give the following Lemma, which is a particular case of the more general result given in \cite{Kifer81}. For the reader convenience we provide a short proof of this Lemma for our situation.

\begin{lem}\label{lem:trapped}
For all $\beta>0$, there exist constants $\const$ and $\Const$ such that
\begin{equation}
\sup_{t\in [\const |\log \ve|,\ve^{-\alpha}]}\bP\left(\Beta(t)\notin I^a\right)\leq \Const \ve^\beta.
\end{equation}
\end{lem}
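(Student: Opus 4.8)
The plan is to prove two facts and glue them with the strong Markov property: a \emph{descent estimate}, namely that from any starting point $x$ the process $\Beta$ of \eqref{eq:f-w} enters the smaller neighbourhood $I^{a/2}=\bigcup_i[\theta_{i,-}-a/2,\theta_{i,-}+a/2]$ before time $\const|\log\ve|$ off an event of probability $\leq\Const\ve^\beta$; and a \emph{confinement estimate}, namely that once in $I^{a/2}_i$ the process does not leave $I^a_i$ before time $\ve^{-\alpha}$, except on an event of probability $\cO(e^{-\const\vei})$. Shrinking $a$ if necessary, we may assume that on each $I^a_i$ one has $\bar\omega(\theta)(\theta-\theta_{i,-})\leq-\const(\theta-\theta_{i,-})^2$ (possible since $\bar\omega(\theta_{i,-})=0$, $\bar\omega'(\theta_{i,-})<0$), that each $I^a_i$ lies well inside the basin of $\theta_{i,-}$ for \eqref{eq:averageeq}, and that the open neighbourhoods $U_i=(\theta_{i,+}-\delta_0,\theta_{i,+}+\delta_0)$ of the unstable zeros are pairwise disjoint and disjoint from $I^a$.

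For the descent estimate, decompose $\bT$ into $\bT\setminus\bigcup_iU_i$ and the $U_i$. On $\bT\setminus\bigcup_iU_i$ one has $|\bar\omega|\geq\const>0$, so the deterministic flow $\bar\theta$ carries any such point into $I^{a/4}$ within a bounded time; on such a bounded time interval the Burkholder--Davis--Gundy inequality (with a high moment) together with Gronwall's lemma, whose constant is $\cO(1)$ here since the approach to a stable zero is contracting, show that $\sup_s|\Beta(s)-\bar\theta(s)|\leq\ve^{1/2-\zeta}$ off an event of probability $\leq\Const\ve^\beta$, so $\Beta$ reaches $I^{a/2}$. If instead $x\in U_i$, linearise \eqref{eq:f-w} at $\theta_{i,+}$: writing $\lambda_i=\bar\omega'(\theta_{i,+})>0$ one gets $\Beta(t)-\theta_{i,+}=e^{\lambda_i t}\bigl(Y+o(1)\bigr)$ with $Y$ approximately Gaussian of variance of order $\ve$, whence $\bP(|Y|\leq\ve^{1/2+\beta})=\cO(\ve^\beta)$; off this event $\Beta$ reaches distance $\delta_0$ from $\theta_{i,+}$ — hence leaves $U_i$, after which it lands in $\bT\setminus\bigcup_jU_j$ — within a time $\leq\const|\log\ve|$, and then the previous case applies. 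A union bound over the finitely many $i$ completes the descent estimate.

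For the confinement estimate, fix $i$ and set $V(\theta)=(\theta-\theta_{i,-})^2$. Using $\bar\omega V'\leq-2\const V$ on $I^a_i$, $V''=2$ and $(V')^2=4V$, one checks that for $\sigma=\sigma(\const,\|\bVar\|)>0$ small enough $L_\ve\bigl(e^{\sigma V/\ve}\bigr)\leq0$ on $\{V\geq\const\ve\}\cap I^a_i$, so $e^{\sigma V(\Beta(t))/\ve}$ is a supermartingale up to the exit time from $I^a_i$ while $\Beta$ stays outside the ball $\{V\leq r^2\}$, $r^2=\const\ve$. Optional stopping between $\{V=a^2/4\}$ and $\{V=r^2\}$ shows each crossing from $\{V=a^2/4\}$ reaches $\partial I^a_i$ with probability $\leq e^{\sigma(a^2/4-a^2)/\ve}\leq e^{-\const\vei}$, and at most $\cO(\ve^{-\alpha})$ such crossings occur before time $\ve^{-\alpha}$ (each has duration bounded below); combined with the strong Markov property at the (random, $\leq\const|\log\ve|$) entrance time into $I^{a/2}_i$, this gives $\bP(\Beta\text{ leaves }I^a_i\text{ before }\ve^{-\alpha})\leq\cO(\ve^{-\alpha})e^{-\const\vei}=\cO(e^{-\const\vei/2})$. (This is exactly the Freidlin--Wentzell exit estimate of \cite{Kifer81}, the quasipotential from $\theta_{i,-}$ to $\partial I^a_i$ being a strictly positive constant depending only on $a$.)

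Putting the two estimates together with a union bound over $i$, for every $t\in[\const|\log\ve|,\ve^{-\alpha}]$ one gets $\bP(\Beta(t)\notin I^a)\leq\Const\ve^\beta+\cO(e^{-\const\vei/2})\leq\Const\ve^\beta$. The main obstacle is the passage near the unstable zeros in the descent step: there the process can only be controlled with polynomially small probability (of order $\ve^\beta$, precisely the bound appearing in the statement), not exponentially, since escaping the slow hyperbolic region requires the noise to push the process off the stable manifold of $\theta_{i,+}$.
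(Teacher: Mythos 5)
Your overall skeleton (reach a small neighbourhood of the stable zeros in time $\cO(|\log\ve|)$, then confine there until $\ve^{-\alpha}$, gluing with the Markov property and splitting according to the distance from the unstable zeros) is the same as the paper's, and your confinement step is acceptable — the paper simply invokes the Freidlin--Wentzell large deviation estimates at that point, and your exponential supermartingale $e^{\sigma V/\ve}$ plus crossing count is a legitimate substitute (modulo the minor point that "each crossing has duration bounded below" only holds with overwhelming probability, which is easily repaired). The genuine gap is in the descent step near an unstable zero. You claim that for the \emph{nonlinear} process one can write $\Beta(t)-\theta_{i,+}=e^{\lambda_i t}(Y+o(1))$ with $Y$ approximately Gaussian of variance $\cO(\ve)$, and that on $\{|Y|\geq \ve^{1/2+\beta}\}$ the process exits $U_i$ within $\const|\log\ve|$. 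For this to work the correction must be $o(\ve^{1/2+\beta})$ uniformly up to the exit time, i.e.\ up to times of order $\frac{1/2+\beta}{\lambda_i}|\log\ve|$. But the comparison with the linearised process is obtained by Gronwall against the expanding rate $\lambda_i$: while the process stays within distance $\ve^{1/2-\zeta}$ of $\theta_{i,+}$ the quadratic terms are $\cO(\ve^{1-2\zeta})$ and the accumulated error is $\cO(\ve^{1-2\zeta}e^{\lambda_i t})$, which relative to the signal $e^{\lambda_i t}\ve^{1/2+\beta}$ is of order $\ve^{1/2-2\zeta-\beta}$. This is not small once $\beta\geq 1/2-2\zeta$, so the one-shot linearisation cannot deliver an arbitrary power $\beta$; as written, your argument only yields an exponent strictly below $1/2$ per pass (and, moreover, "approximately Gaussian" would still require a bounded-density statement for the escape variable of the nonlinear process, which you do not prove).

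This is exactly the point where the paper's proof differs and where the missing idea lies: the paper compares with the linear process only up to time $\frac{\gamma}{\bar\omega'(\theta_{i,+})}|\log\ve|$ with $\gamma<1/2$, obtaining a per-attempt failure probability $\cO(\ve^{\gamma-\zeta})$ with a \emph{small} exponent, handles the intermediate band $\{\ve^{1/2-\zeta}\leq|x-\theta_{i,+}|\leq b\}$ by a deterministic comparison with $\bar\theta$, and then \emph{iterates} via the Markov property, restarting the trajectories that are not yet trapped; after $\cO(\beta/\iota)$ attempts the failure probability becomes $\cO(\ve^{\beta})$, at the price of taking the constant $\const$ in front of $|\log\ve|$ large depending on $\beta$ (which is why the statement lets $\const$ depend on $\beta$). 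To repair your proof you should either insert this recurrence step (weak one-shot bound plus Markov restarts), or invoke the exit-time results for hyperbolic fixed points of \cite{Kifer81} rather than asserting the strong linearisation estimate directly.
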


\begin{proof}
We divide our analysis in three cases, depending on the position of the starting point $\Beta(0)$.

Let us denote, for any $c>0$ and $i\in\{1,\ldots,n\}$, $J^c_i=\{x\in \bT, |x-\theta_{i,+}|\leq c \}$, $J^c=\cup_{i=1,\ldots,n}J_i^c$ and $\bar J^c$ the complementary of $J^c$.
From large deviations estimates (see \cite{WeFr}), we know that for any $b>0$, with probability $O(e^{-\const \vei})$, the process $(\Beta(t))_{t\geq 0}$ starting from any $x\in \bar J^b$ reaches $I^{\frac{a}{2}}$ before a time $T_b$ independent from $\ve$, and then does not leave $I^a$ before $t=\ve^{-\alpha}$.

Suppose now that $(\Beta(t))_{t\geq 0}$ starts from a point $x \in J^b_i\cap \bar J^{\ve^{1/2-\zeta}}$ for some $\zeta>0$. Then for any $\delta >0$, the solution $\bar \theta(\cdot,x)$ of \eqref{eq:averageeq} reaches $\bar J^b$ before $T_\ve^\delta=\frac{\frac12-\zeta}{\bar \omega'(\theta_{i,+})-\delta}|\log \ve| $ if $b$ is taken small enough. Now we have
\[
\Beta(t)-\bar\theta(t,x)=\int_0^t\big(\bar\omega(\Beta(s))-\bar\omega(\theta(s,x))\big)ds
+\sqrt \ve \int_0^t \bVar(\Beta(s))d B_s,
\]
and from Doob inequality and Burholder-Davis-Gundy inequality, denoting $Z^\ve_t=\int_0^t \bVar(\Beta(s))d B_s$, we get for any $m\geq 1$:
\[
\bP\left(\sup_{t\in[0, T_\ve^\delta] }\left|Z_t^\ve\right|> \ve^{-\xi}\right)\leq \ve^{m\xi}\bE\left[\sup_{t\in[0, T_\ve^\delta] }\big|Z_{T_\ve^b}^\ve\big|^m\right]\leq \Const \ve^{m\zeta} |\log \ve|.
\]
Denote $t_*=\inf\{t\in [0, T_\ve^\delta], |\Beta(t)-\bar\theta(t,x)|\geq b/2\}$. On the event $A^\ve=\{\sup_{t\in[0, T_\ve^\delta] }\left|Z_t^\ve\right|\leq \ve^{-\xi}\}$ and if $b$ is small enough we obtain
\[
|\Beta(t)-\bar\theta(t,x)|\leq \big(\bar\omega'(\theta_{i,+})+\delta\big)\int_0^t|\Beta(s)-\bar\theta(s,x)|ds
+\ve^{\frac12-\xi},
\]
and thus by Gr\"onwall inequality as long as $t\leq t_*$ we have the following upper bound:
\[
|\Beta(t)-\bar\theta(t,x)|\leq \ve^{\frac12-\xi-\frac{\bar \omega'(\theta_{i,+})+\delta}{\bar \omega'(\theta_{i,+})-\delta}\left(\frac12-\zeta\right)}.
\]
If we choose $\xi$ and $\delta$ sufficiently small with respect to $\zeta$, this right hand side term goes to $0$ when $\ve$ tends to $0$, which means that on the event $A^\ve$ (which satisfies $1-\bP(A^\ve)=O(\ve^\beta)$ for any $\beta>0$), if $\ve$ is small enough, $t_*=T_\ve^\delta$ and thus $(\Beta(t))_{t\geq 0}$ reaches $\bar J^{b/2}$ before $t=T_\ve^\delta$.

We suppose now that $(\Beta(t))_{t\geq 0}$ starts from a point $x\in J_i^{\ve^{1/2-\zeta}}$, and consider the process $(y(t))_{t\geq 0}$ starting from $x$ and satisfying
\[
\deh y(t)=\bar \omega'(\theta_{i,+})\big(y(t)-\theta_{i,+}\big)\deh t+\sqrt \ve \bVar(\theta_{i,+})\deh B_t.
\]
$y(t)-\theta_{i,+}$ has in fact a Gaussian distribution (projected on the torus) of mean $e^{\bar \omega'(\theta_{i,+})t}(x-\theta_{i,+})$ and variance $\ve \bVar^2(\theta_{i,+})\left(\frac{e^{2\bar\omega(\theta_{i,+})t}-1}{2\bar\omega(\theta_{i,+})} \right)$. So considering a time $t_\ve=\frac{\gamma}{\bar \omega'(\theta_{i,+})}|\log\ve|$ for a $\gamma>0$, we have $\bP(y(t_\ve)\in J_i^{2\ve^{1/2-\zeta}})=O(\ve^{\gamma-\zeta})$. On the other hand, comparing the two processes, we obtain
\[
\deh \big(\Beta(t)-y(t)\big)=\bar \omega'(\theta_{i,+})\big(\Beta(t)-y(t)\big)\deh t
+h_1(\Beta(t)-\theta_{i,+})\deh t +h_2(\Beta(t)-\theta_{i,+})\deh B_t,
\]
where $h_1(u)=O(u^2)$ and $h_2(u)=O(u)$. So if we denote $\tau^\ve$ the exit time from $J^{\ve^{1/2-\zeta}}$ for the process $(\Beta(t))_{t\geq 0}$, then
\[
\bP\left(\sup_{t\in[0,t_\ve]}\left| \int_{0}^{t\wedge \tau^\ve}h_2(\Beta(t)-\theta_{i,+})\deh B_t\right|\geq \ve^{1/2-2\zeta}\right)\leq \Const\ve^{m\zeta}|\log\ve|,,
\]
which implies that for $t\in [0,t_\ve]$
\[
|\Beta(t\wedge\tau^\ve)-y(t\wedge\tau^\ve)|\leq \Const \ve^{1-2\zeta-\gamma}.
\]
This proves, for $\zeta$ and $\gamma$ small enough, that $(\Beta(t))_{t\geq 0}$ reaches $\bar J^{\ve^{1/2-\zeta}}$ before $t_\ve$ with a probability $1-O(\ve^{\gamma-\zeta})$.

We have thus proved, considering these different cases, that after a time of order $c |\log\ve|$ the process $(\Beta(t))_{t\geq 0}$ is trapped in $I^a$ until $t=\ve^{-\alpha}$ with a probability of order $1-O(\ve^\iota)$ for some $\iota>0$. Proceeding by recurrence, considering the trajectories that are not trapped yet, we prove that the process is in $I^a$ with probability $1-O(\ve^\beta)$ for any $\beta>0$, taking the constant $c$ large enough.

\end{proof}

Lemma \ref{lem:trapped} shows that after a time of order $|\log\ve|$ (that allows the process to escape from the neighbourhoods of the unstable fixed points), the process stays with high probability in one of the intervals $I_i$. We can thus, when $(\Beta(t))_{t\geq 0}$ is trapped in $I_i$, couple $(\Beta_t-\theta_{i,-})_{t\geq 0}$ with the process $(x(t))_{t\geq 0}$ defined on the real line by the equation
\begin{equation}\label{eq:sde f}
\deh x(t)= -f_i'(x(t)) \deh t + \sqrt{\ve} g_i(x(t)) \deh B_t,
\end{equation}
with $f_i$ being smooth, satisfying $f_i(0)=0$, $\const\leq f_i''\leq \Const$ and such that $-f'_i(x)=\bar \omega (\theta_{i,-}+x)$ for $|x|\leq a$, and $g_i$ being smooth, $\const\leq g_i\leq \Const$ and such that $g_i(x)=\bVar(\theta_{i,-}+x)$ for $|x|\leq a$.

Remark that the process $(x(t))_{t\geq 0}$ admits the invariant measure $\pi^\ve_i$ with density $h^\ve_i(x)=Z_i^\ve g_i^{-2}(x)e^{-\vei W_i(x)} $, where $W_i(x)=2\int_{0}^x\frac{f'_i(s)}{g_i^2(s)}ds$ and $Z_i^\ve$ is a normalization constant. Then, for any $\beta$, we get for $c$ large enough,
\[
\begin{split}
\int_\bR (Z^\ve_i)^{-1} h^\ve_i(x)d x&=\int_{-c|\log\ve|^{\frac12}}^{c|\log\ve|^{\frac12}}(Z^\ve_i)^{-1} h^\ve_i(x)d x+O(\ve^{\beta})
\\&=\int_{-c|\log\ve|^{\frac12}}^{c|\log\ve|^{\frac12}}g^{-2}(0)e^{-\vei W_i''(0)\frac{x^2}{2}}(1+O(\ve^{\frac12}|\log\ve|^{\frac32}))+O(\ve^\beta),
\end{split}
\]
so $Z_i^\ve=\left(\frac{W''_i(0)}{2\pi \ve} \right)^{-\frac12}+O(\ve|\log\ve|^{\frac32})$, and a similar calculation shows that $\deh_{TV}(\pi_i^\ve,\cG_i^{\ve})=O(\ve|\log\ve|^{\frac32})$.

To conclude the proof of Proposition \ref{prop: close to gaussians}, it suffices now to prove that $x_t$ converges in distribution to $\pi_i^\ve$ fast enough, as stated in the following Lemma. We denote by $P^i_t$ the transition probabilities associated to the process $(x(t))_{t\geq 0}$.

\begin{lem}\label{lem:conv toward pi}
There exist positive constants $\const$ and $\Const$ such that for all $x\in[-a,a]$,
\begin{equation}
\deh_{TV}(  P^i_t(x,\cdot),\pi_i^\ve(\cdot)) \leq \frac{\Const}{\ve}e^{-\const t};
\end{equation}
\end{lem}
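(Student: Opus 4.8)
The plan is to exploit the fact that the process $(x(t))_{t\ge 0}$ defined by \eqref{eq:sde f} is a one-dimensional diffusion on $\bR$ with a strictly confining potential: $-f_i'(x)x \to -\infty$ at rate at least $\const |x|^2$ since $f_i''\ge\const>0$, and the diffusion coefficient $g_i$ is bounded above and below. The quantity $\frac1\ve$ in the bound is the obstruction to a naive uniform-in-$\ve$ spectral gap, so the right strategy is to rescale. First I would set $y = x/\sqrt\ve$ and $s = \ve t$ (or, what is cleaner, work directly with the generator $L^i\vf = -f_i'\vf' + \frac\ve2 g_i^2\vf''$ and its invariant measure $\pi_i^\ve$); after the space rescaling the generator becomes, to leading order, an Ornstein--Uhlenbeck generator $-W_i''(0)\, y\partial_y + g_i^2(0)\partial_y^2$ plus lower-order terms, whose spectral gap is an $\ve$-independent constant. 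This suggests that the true relaxation time is of order $\ve^{-1}$, i.e.\ $\deh_{TV}(P^i_t(x,\cdot),\pi_i^\ve) \le \Const\, e^{-\const \ve t}$, and the extra $\ve^{-1}$ prefactor in the statement is a harmless (and deliberately non-optimal) slack that will be absorbed.

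The cleanest route to an $L^1$/total-variation contraction is a reflection (or synchronous) coupling argument analogous to the one used in Lemma \ref{lem:conver-no-zero}, but now in the confining regime. I would take two copies $x(t), x'(t)$ driven by a suitably coupled pair of Brownian motions (synchronous coupling once they are close, so that the difference $d(t)=x(t)-x'(t)$ satisfies $\deh d = -(f_i'(x)-f_i'(x'))\deh t$, and by convexity $f_i'(x)-f_i'(x') \ge \const\, d$ when $d>0$), giving exponential contraction $|d(t)|\le |d(0)|e^{-\const t}$ deterministically on the event that both processes stay in a region where $f_i''\ge\const$; when they are far apart one uses the confinement to bring both into a fixed compact set in time $\cO(1)$ with probability bounded below — exactly the large-deviation/Gr\"onwall estimates already deployed in the proof of Lemma \ref{lem:trapped}. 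Iterating over time windows yields that the coupling time $\tau$ satisfies $\bP(\tau>t)\le \Const e^{-\const\ve t}$ — the $\ve$ enters because, before the two copies get close, the driving noise is of size $\sqrt\ve$, so the natural relaxation window is $\cO(\ve^{-1})$ rather than $\cO(1)$. Since $\deh_{TV}(P^i_t(x,\cdot), P^i_t(x',\cdot))\le \bP(\tau>t)$ and $\pi_i^\ve$ is invariant, integrating $x'$ against $\pi_i^\ve$ gives $\deh_{TV}(P^i_t(x,\cdot),\pi_i^\ve)\le \Const e^{-\const\ve t}$, which is even stronger than claimed; multiplying by the trivial bound $\deh_{TV}\le 1$ for $t\le \ve^{-1}|\log\ve|$ and by the exponential bound afterwards recovers the stated $\frac{\Const}\ve e^{-\const t}$ in one line. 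Alternatively, if one prefers the analytic route, one establishes a Poincar\'e inequality for $\pi_i^\ve$ with constant $\cO(\ve^{-1})$ via the Muckenhoupt criterion (using $W_i''(0)>0$ and $\const\le g_i\le\Const$), which gives an $L^2(\pi_i^\ve)$ decay $e^{-\const\ve t}$, and then converts to total variation at the cost of $\|P^i_{t_0}(x,\cdot)/\pi_i^\ve\|_{L^2}$ for a short initialization time $t_0$, which is where the explicit $\ve^{-1}$ prefactor comes from.

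The main obstacle is handling the behaviour of the coupled pair \emph{before} they enter the common convexity region: one must rule out that either copy wanders off to large $|x|$ and thereby desynchronizes for a long time. This is where the global growth condition $f_i''\ge\const$ (hence a genuinely confining, essentially quadratic potential on all of $\bR$, not just on $[-a,a]$) is essential, and the argument parallels the three-case analysis in Lemma \ref{lem:trapped}: a large-deviation bound shows escape to $|x|\gtrsim 1$ has probability $\cO(e^{-\const/\ve})$ over any $\cO(1)$ window, so with overwhelming probability both copies are confined to a fixed compact interval after time $\cO(1)$, and on that interval the deterministic contraction plus the $\sqrt\ve$-size noise give the claimed coupling rate. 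Everything else — the Gr\"onwall estimate, the Burkholder--Davis--Gundy bound on the stochastic integral, and the conversion of coupling time tail into total-variation decay — is routine and already present in the preceding proofs.
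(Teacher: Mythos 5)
There is a genuine quantitative error at the heart of your proposal: you claim the relaxation time of \eqref{eq:sde f} is of order $\ve^{-1}$ and that the bound $\deh_{TV}(P^i_t(x,\cdot),\pi_i^\ve)\leq \Const e^{-\const\ve t}$ is ``even stronger than claimed''. Both assertions are false, and the scaling heuristic behind them is wrong. In \eqref{eq:sde f} the drift $-f_i'$ is of size $\cO(1)$ (only the noise is small), so under the rescaling $y=x/\sqrt\ve$ \emph{no time rescaling is needed}: $-f_i'(x)\partial_x\approx -f_i''(0)\,y\,\partial_y$ and $\frac\ve2 g_i^2\partial_x^2\approx\frac12 g_i(0)^2\partial_y^2$, so the limiting Ornstein--Uhlenbeck generator has an $\ve$-independent gap \emph{in the original time variable}; equivalently, the Poincar\'e constant of $\pi_i^\ve$ for $\int(\vf')^2\deh\pi_i^\ve$ is $\cO(\ve)$ (the measure has variance $\cO(\ve)$), which exactly cancels the $\ve$ in the Dirichlet form $\frac\ve2\int g_i^2(\vf')^2\deh\pi_i^\ve$ and gives a spectral gap of order one, not of order $\ve$. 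Consequently the correct relaxation time is $\cO(|\log\ve|)$, which is what the stated bound $\frac\Const\ve e^{-\const t}$ encodes, and which is essential for Proposition \ref{prop: close to gaussians}, whose time window starts at $\const|\log\ve|$ and ends at $\ve^{-\alpha}$ with $\alpha<1$: on that whole window your bound $e^{-\const\ve t}$ is essentially $1$ and gives nothing, while the stated bound is already exponentially small. Your final ``one line'' recovery of the statement also fails: $e^{-\const\ve t}$ does \emph{not} dominate $\frac\Const\ve e^{-\const t}$ for $t$ between $\const^{-1}|\log\ve|$ and $\ve^{-1}$, so the implication goes in the wrong direction. The error seems to come from conflating this single-well problem with the zero-free (rotation) case of Lemma \ref{lem:conver-no-zero}, where the drift does not contract and mixing genuinely takes time $\ve^{-1}$.

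Your coupling skeleton could be repaired, but not as written: with a synchronous coupling the global convexity $f_i''\geq\const$ gives $\bE|x(t)-x'(t)|^2\leq e^{-\const t}|x(0)-x'(0)|^2$ with an $\ve$-independent rate (no confinement event or large-deviation step is needed, since $f_i''\geq\const$ holds on all of $\bR$), and one must then add a genuine merging step at the noise scale $\sqrt\ve$ (reflection coupling or a local minorization) to convert the Wasserstein contraction into total variation; that conversion is precisely where the $\ve^{-1}$ (or $\ve^{-1/2}$) \emph{prefactor} — not a slowed rate — comes from. The paper instead argues via the Harris-type theorem of Hairer--Mattingly (Theorem \ref{th:HM}): the function $f_i$ itself is a Lyapunov function, $L_if_i\leq-\const f_i+\Const\ve$, hence $\cP f_i\leq\gamma f_i+K\ve$ at integer times; a minorization $\inf_{|x|\leq b\sqrt\ve}\cP(x,\cdot)\geq\zeta\mu^\ve$ is obtained from Kifer's transition-density estimates as in Lemma \ref{lem:second-step}; Theorem \ref{th:HM} with weight $\beta/\ve$ then yields geometric convergence at an $\ve$-independent rate, with the $\Const/\ve$ prefactor coming from $\rho_{\beta/\ve}(\delta_x,\pi_i^\ve)=\cO(\ve^{-1})$ for $x\in[-a,a]$.
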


To prove this Lemma, we rely on the Harris recurrence type result of M. Hairer and J. Mattingly \cite{HM}, that we recall in the following Theorem.

\begin{thm}[Hairer, Mattingly]\label{th:HM}
Consider a Markov kernel $\cP$ satisfying $\cP V(x)\leq \gamma  V(x)+K$ where $V$ is a Lyapunov function, and $K\geq 0$, $\gamma\in (0,1)$ are constants such that $\inf_{x\in \cC}\cP(x,\cdot)\geq  \zeta \nu(\cdot)$ for a constant $ \zeta \in (0,1)$, a probability measure $\nu$, and the set $\cC=\{V(x)\leq R\}$ with $R>\frac{2K}{1-\gamma}$.
Then we have
\begin{equation}
\rho_\beta(\cP \mu_1,\cP\mu_2)\, \leq \, \bar  \zeta  \rho_\beta(\mu_1,\mu_2)\, ,
\end{equation}
where $\rho_\beta$ is the weighted variational distance
\begin{equation}
\rho_\beta(\mu_1,\mu_2)\, =\, \int_X (1+\beta V(x))|\mu_1-\mu_2|(dx)\, ,
\end{equation}
and one can choose for any $ \zeta _0\in(0, \zeta )$ and $\gamma_0\in (\gamma+\frac{2K}{R},1)$,
\begin{equation}
\beta\, =\, \frac{ \zeta_0}{K}\, ,
\end{equation}
and
\begin{equation}
\bar  \zeta  \, =\, (1-( \zeta - \zeta_0))\vee \frac{2+R\beta\gamma_0}{2+R\beta}\, .
\end{equation}
\end{thm}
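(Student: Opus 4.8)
The plan is to follow the coupling proof of Hairer and Mattingly \cite{HM}. The conceptually central first step is to realise $\rho_\beta$ as a transportation distance. I would introduce the distance-like function $d_\beta(x,y)=(2+\beta V(x)+\beta V(y))\,\Id_{x\neq y}$, note that it satisfies the triangle inequality, and record the two descriptions $\rho_\beta(\mu_1,\mu_2)=\inf_\Gamma\int_{X\times X}d_\beta\,\deh\Gamma$ (infimum over couplings $\Gamma$ of $\mu_1$ and $\mu_2$) and, dually, $\rho_\beta(\mu_1,\mu_2)=\sup\{\mu_1(\phi)-\mu_2(\phi):\ \phi\text{ is }1\text{-Lipschitz for }d_\beta\}$. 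Granting this, the gluing property of transportation costs reduces the theorem to the one-point contraction $\rho_\beta(\cP(x,\cdot),\cP(y,\cdot))\leq\bar\zeta\,d_\beta(x,y)$ for all $x\neq y$: writing $\cP\mu_i$ as the $\cP$-image of the two marginals of an optimal coupling of $\mu_1,\mu_2$, gluing the one-point couplings and integrating the pointwise bound — on the diagonal both sides vanish — gives the statement.

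With this reduction in hand I would normalise by assuming $K>0$ (otherwise replace $K$ by a larger constant) and setting $\beta=\zeta_0/K$, so that $\beta K=\zeta_0$, and then prove the one-point contraction by splitting on the value of $V(x)+V(y)$. If $V(x)+V(y)\geq R$, I would use only the Lyapunov drift: $\rho_\beta(\cP(x,\cdot),\cP(y,\cdot))\leq 2+\beta\,\cP V(x)+\beta\,\cP V(y)\leq 2+2\beta K+\beta\gamma\bigl(V(x)+V(y)\bigr)$; then I would check that $u\mapsto(2+2\beta K+\beta\gamma u)/(2+\beta u)$ is decreasing, so that over $\{u\geq R\}$ it is maximised at $u=R$, and observe that the hypothesis $\gamma_0>\gamma+2K/R$ is precisely what makes this maximum strictly smaller than $(2+R\beta\gamma_0)/(2+R\beta)\leq\bar\zeta$.

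If instead $V(x)+V(y)<R$, then $V(x)\leq R$ and $V(y)\leq R$, so both $x$ and $y$ lie in $\cC$ and the minorisation applies. I would split $\cP(x,\cdot)=\zeta\nu+(1-\zeta)\tilde Q_x$ and $\cP(y,\cdot)=\zeta\nu+(1-\zeta)\tilde Q_y$ with $\tilde Q_x,\tilde Q_y$ probability measures, couple the two kernels by letting them agree on a common $\nu$-sample with probability $\zeta$ and be independent otherwise, and bound $\rho_\beta(\cP(x,\cdot),\cP(y,\cdot))\leq(1-\zeta)\bigl(2+\beta\tilde Q_x(V)+\beta\tilde Q_y(V)\bigr)$ using $d_\beta(u,v)\leq 2+\beta V(u)+\beta V(v)$. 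Since $(1-\zeta)\tilde Q_x(V)=\cP V(x)-\zeta\nu(V)\leq\cP V(x)\leq\gamma V(x)+K$ (and the same for $y$), the right-hand side is at most $2\bigl(1-(\zeta-\zeta_0)\bigr)+\beta\gamma\bigl(V(x)+V(y)\bigr)$, which is $\leq\bar\zeta\bigl(2+\beta(V(x)+V(y))\bigr)=\bar\zeta\,d_\beta(x,y)$ because $\bar\zeta\geq 1-(\zeta-\zeta_0)$ and $\bar\zeta>\gamma$. Finally $\bar\zeta<1$ since $\zeta_0<\zeta$ and $\gamma_0<1$.

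The arithmetic above — monotonicity of the ratio, manipulations with $\beta=\zeta_0/K$ — is routine. The step I expect to be the main obstacle is the transportation-distance identification: checking that the ``diagonal-plus-independent'' coupling achieves $\rho_\beta$ (which gives $\rho_\beta\geq\inf_\Gamma\int d_\beta\,\deh\Gamma$) and producing the matching lower bound through an explicit $d_\beta$-Lipschitz test function built from the Hahn decomposition of $\mu_1-\mu_2$, together with the (minor) measurable-selection point required to glue the one-point couplings into a coupling of $\cP\mu_1$ and $\cP\mu_2$. Once these are settled the remainder is bookkeeping.
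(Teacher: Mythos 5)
The paper does not prove this statement at all: it is quoted, with its explicit constants, from Hairer and Mattingly \cite{HM} and used as a black box in the proof of Lemma \ref{lem:conv toward pi}, so the only meaningful comparison is with the proof in \cite{HM}. Your proposal is essentially a faithful reconstruction of that proof: the same semi-metric $d_\beta(x,y)=(2+\beta V(x)+\beta V(y))\Id_{x\neq y}$, the same normalisation $\beta K=\zeta_0$, and the same two-case one-point estimate (pure Lyapunov drift when $V(x)+V(y)\geq R$, minorisation-based coupling when $V(x)+V(y)<R$); your arithmetic checks out in both cases, including the monotonicity of $u\mapsto(2+2\beta K+\beta\gamma u)/(2+\beta u)$, the role of $\gamma_0>\gamma+2K/R$, and the fact (used implicitly in the second case) that $\bar\zeta\geq\gamma_0>\gamma$. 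The one genuine difference is presentational: \cite{HM} argues in the dual picture, showing that $\cP$ maps functions that are $1$-Lipschitz for $d_\beta$ to functions that are $\bar\zeta$-Lipschitz, which sidesteps the gluing/measurable-selection point you flag, whereas you argue in the primal (coupling) picture. This is harmless, and the step you expect to be the main obstacle is in fact easier than you fear: off the diagonal $d_\beta(x,y)=(1+\beta V(x))+(1+\beta V(y))$ is additive, so the cost of any coupling depends only on how much mass it keeps on the diagonal, giving $\inf_\Gamma\int d_\beta\,\deh\Gamma=\int(1+\beta V)\,\deh|\mu_1-\mu_2|$ directly (keep the overlap $\mu_1\wedge\mu_2$ on the diagonal), and the gluing can be performed with the explicit couplings you construct, or avoided entirely by passing to the dual as in \cite{HM}. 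The only caveat worth recording is the degenerate case $K=0$, where $\beta=\zeta_0/K$ is meaningless; your fix of enlarging $K$ works provided the enlarged constant is chosen small enough to remain compatible with the given $\gamma_0$.
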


\begin{proof}[Proof of Lemma \ref{lem:conv toward pi}.]
Denoting $L_i$ the diffusion operator associated to \eqref{eq:sde f}, we get
\begin{equation}
L_i f_i = -(f_i')^2+\frac{\ve g_i^2}{2}f_i''.
\end{equation}
Since $f_i''\geq \const$, then $(f_i')^2\geq 2 \const f$, which leads to, recalling that both $f_i''$ and $g_i$ are bounded,
\begin{equation}
L_if_i \leq-\const f_i+\Const\ve.
\end{equation}
We deduce the following inequality for the kernel:
\begin{equation}
P^i_t f_i \leq e^{-\const t}f_i+\Const \ve\, .
\end{equation}
We can then consider the dynamics at integer times and we denote $\cP=P^i_1$. It follows
\begin{equation}
\cP f_i \leq \gamma f_i +K\ve\, ,
\end{equation}
where $0<\gamma<1$, and $\gamma$ and $K$ do not depend on $\ve$.
Moreover we can rewrite \eqref{eq:approx transition} but this time for the probability kernel $\cP(x,\cdot)$ of the process $(x_t)_{t\geq 0}$, relying again on \cite{Kifer76}, and make the same estimates as in the proof of Lemma \ref{lem:second-step} to get a bound of the type \eqref{eq:approxtransition2} for $\cP(x,\cdot)$, which means that we can find for any $b>0$ a constant $\zeta>0$ that does not depend on $\ve$ and a probability measure $\mu^\ve$ such that
\begin{equation}
\inf_{|x|\leq b\sqrt{\ve}} \cP(x,\cdot)\geq \zeta \mu^{\ve}.
\end{equation}
Since the set $\cC=\{x\;:\;-f_i'(x)\leq R\}$, where $R$ satisfies $R>\frac{2K\ve}{1-\gamma}$, is included in $\{x\;:\; |x|\leq b\sqrt{\ve}\}$ when $b$ is large enough, our system satisfies the conditions to apply Theorem \ref{th:HM}: we can find constants $\bar \zeta$ and $\beta$ that do not depend on $\ve$ such that
\begin{equation}
\rho_{\beta/\ve}(P^i_{n}(x,\cdot),\pi^\ve_i(\cdot)) \leq \bar \zeta^n \rho_{\beta/\ve}(\delta_x,\pi_i^\ve),
\end{equation}
which implies (remark that $\int f_i\deh \pi_i^\ve \leq \Const \sqrt\ve$) that for $x\in [-a,a]$,
\begin{equation}
\deh_{TV}(P^i_{n}(x,\cdot),\pi_i^\ve(\cdot)) \,\leq \, \frac{\Const}{\ve}\bar \zeta^n\, .
\end{equation}
The result for non-integers times follows directly, since  $\deh_{TV}( P^i_{t+\delta}(x,\cdot),\pi_i^\ve(\cdot))\leq \deh_{TV}( P^i_{t}(x,\cdot),\pi_i^\ve(\cdot))$ for $\delta>0$.
\end{proof}

\section{On the structure of the SRB measures}
First of all let us recall that the work of Tsujii \cite{Tsujii-acta} implies that for partially hyperbolic endomorphisms, generically, the physical  measures are finitely many and absolutely continuous with respect to Lebesgue, this applies to maps of the form \eqref{eq:f-eps}. Unfortunately, on the one hand it is not clear how to check if such a property holds for a specific system, on the other hand such result provides little information of how the physical measure looks like. On the contrary, quite a bit of informations can be obtained by the results established in the previous sections.

Note that, for each standard family $\stdf$, the set of averages of the pushforward $\frac 1n\sum_{k=0}^{n-1}(F_\ve)_*\mu_\stdf$ is weakly compact, hence it has accumulation points. Clearly such accumulation points are invariant measures. Let $\cM_{\operatorname{sp}}(F_\ve)$ be the closure of the set of such accumulation points, thus $\cM_{\operatorname{sp}}(F_\ve)$ is a subset of the invariant measures of the system. Such a class of measure is often called U-Gibbs. See \cite{dolgonote} for a presentation of their properties that far exceeds our present needs.

Recall that any physical measure $\mu$ must belong to $\cM_{\operatorname{sp}}(F_\ve)$ (see \cite[Lemma 9.8]{DeL2}). It is then natural to study the set $\cM_{\operatorname{sp}}(F_\ve)$. This is a set well behaved with respect to the ergodic properties as the next Lemma shows.
\begin{lem}\label{lem:ergodicity} If $\mu\in \cM_{\operatorname{sp}}(F_\ve)$, then its ergodic decomposition consists of measures that also belong to $\cM_{\operatorname{sp}}(F_\ve)$.
\end{lem}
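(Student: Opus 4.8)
The plan is to show that the defining property of $\cM_{\operatorname{sp}}(F_\ve)$ --- being an accumulation point of Birkhoff averages $\frac1n\sum_{k=0}^{n-1}(F_\ve)_*\mu_\stdf$ of standard families, taken in the weak topology and then closed --- is inherited by the ergodic components. Let $\mu\in\cM_{\operatorname{sp}}(F_\ve)$ and write its ergodic decomposition as $\mu=\int \mu_\xi\,\deh m(\xi)$. The first, and conceptually cleanest, route I would take is to produce, for each ergodic component $\mu_\xi$ (or rather for $m$-a.e.\ $\xi$), a sequence of standard families whose Birkhoff averages converge weakly to $\mu_\xi$. Since $\cM_{\operatorname{sp}}(F_\ve)$ is closed and invariant, it then suffices to exhibit $\mu_\xi$ as a weak limit of elements of $\cM_{\operatorname{sp}}(F_\ve)$, or directly as an accumulation point of standard-family averages.

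The key mechanism is that $\cM_{\operatorname{sp}}(F_\ve)$ is \emph{invariant} under $(F_\ve)_*$ and, more importantly, closed under the operation of taking conditional measures on standard curves: if $\mu\in\cM_{\operatorname{sp}}(F_\ve)$, then by Proposition~\ref{p_invarianceStandardPairs} the pushforwards $(F_\ve)^n_*\mu$ decompose into standard families, so the Birkhoff averages of $\mu$ itself can be re-expressed (up to small error, using that $\mu$ is already an average of standard families in the weak closure) as averages of standard families. This gives that $\cM_{\operatorname{sp}}(F_\ve)$ is stable under $\mu\mapsto$ (Birkhoff average of $\mu$), hence under the ergodic-averaging operator $A_n\mu=\frac1n\sum_{k=0}^{n-1}(F_\ve)^k_*\mu$. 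Concretely, the steps are: \emph{(i)} observe $\cM_{\operatorname{sp}}(F_\ve)$ is convex, weakly compact, $(F_\ve)_*$-invariant, and every $\mu\in\cM_{\operatorname{sp}}(F_\ve)$ satisfies $A_n\mu\to\mu$ weakly along a subsequence --- in fact by the structure of the definition $A_n\mu\in\overline{\cM_{\operatorname{sp}}(F_\ve)}=\cM_{\operatorname{sp}}(F_\ve)$ for all $n$; \emph{(ii)} invoke a standard ergodic-decomposition-on-a-simplex argument: the ergodic invariant measures contained in a weakly closed, convex, $(F_\ve)_*$-invariant set $\cK$ that is closed under $A_n$ are exactly the extreme points of $\cK\cap\{$invariant measures$\}$ lying in $\cK$, and the decomposition of any $\mu\in\cK$ is supported on them; \emph{(iii)} conclude that the ergodic components $\mu_\xi$ lie in $\cM_{\operatorname{sp}}(F_\ve)$.

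The subtle point --- and the step I expect to be the main obstacle --- is step \emph{(ii)}: an ergodic decomposition $\mu=\int\mu_\xi\,\deh m(\xi)$ does \emph{not}, in general, force the individual $\mu_\xi$ to lie in a closed convex invariant set containing $\mu$, because the barycentre of a set of measures can lie in a closed convex set without the constituents doing so unless one knows more. The resolution is to use the \emph{specific} structure of $\cM_{\operatorname{sp}}(F_\ve)$: it is not just any closed convex invariant set, it is characterized by an approximation property by standard families, and standard families are a \emph{localized} object (a standard pair is supported on a short curve). The right argument is therefore: fix a generic point $x$ for $\mu_\xi$ (in the sense of the pointwise ergodic theorem, which applies $m$-a.e.), take a tiny standard pair $\ell_x$ through $x$ with density close to uniform; by \cite[Lemma 9.8]{DeL2}-type reasoning the Birkhoff averages $\frac1n\sum_{k<n}(F_\ve)^k_*\mu_{\ell_x}$ have accumulation points that are physical-type measures, and by choosing $x$ in the basin of the ergodic component $\mu_\xi$ and using that $\mu_\xi$ is ergodic (so its basin has full $\mu_\xi$-measure and Birkhoff averages from a.e.\ point converge to $\mu_\xi$), these accumulation points equal $\mu_\xi$. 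One must check that a single point's forward orbit can indeed be shadowed by the standard-family dynamics well enough that the Cesàro averages of $\delta_x$ and of $\mu_{\ell_x}$ are weakly close --- this is exactly the kind of statement underlying \cite[Lemma 9.8]{DeL2}, and I would cite it. Granting that, $\mu_\xi\in\cM_{\operatorname{sp}}(F_\ve)$ for $m$-a.e.\ $\xi$, which is the claim.

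If a softer argument is preferred and one is willing to weaken "ergodic decomposition consists of measures in $\cM_{\operatorname{sp}}(F_\ve)$" to "is supported on $\cM_{\operatorname{sp}}(F_\ve)$", then steps \emph{(i)}--\emph{(iii)} with the Choquet-theoretic formulation suffice and the pointwise argument can be bypassed; but since the statement asks for the components themselves, I would carry out the localized standard-pair argument above. Throughout, the only non-elementary inputs are Proposition~\ref{p_invarianceStandardPairs} (invariance of the standard-family class) and the cited \cite[Lemma 9.8]{DeL2} (approximation of physical/ergodic measures by standard-family averages); everything else is soft functional analysis on the simplex of invariant measures.
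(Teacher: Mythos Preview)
Your localized argument in step~(ii) has a genuine gap. You propose to take a $\mu_\xi$-generic point $x$, put a short standard pair $\ell_x$ through it, and argue that the Birkhoff averages of $\mu_{\ell_x}$ converge to $\mu_\xi$. But there is no reason this should hold: the standard pair $\ell_x$ is supported on a curve of fixed positive length in the (approximately) unstable direction, and under iteration this curve expands and wraps around phase space. The limiting Birkhoff average of $\mu_{\ell_x}$ is governed by the collective behaviour of \emph{all} points on the curve, not by the single point $x$. If, say, $\mu$ has two ergodic components $\mu_1,\mu_2$ whose basins are intertwined along unstable curves, a standard curve through a $\mu_1$-generic point will typically meet the basin of $\mu_2$ in a set of positive length, and the Birkhoff averages of $\mu_{\ell_x}$ will accumulate on a nontrivial mixture rather than on $\mu_1$. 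Lemma~9.8 of \cite{DeL2} says only that physical measures lie in $\cM_{\operatorname{sp}}(F_\ve)$; it does not furnish the shadowing statement you invoke, and in this partially hyperbolic setting no such statement is available without substantial further work.

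The paper's argument bypasses this difficulty by exploiting a different stability property of $\cM_{\operatorname{sp}}(F_\ve)$: it is closed under passing to invariant measures that are \emph{absolutely continuous} with respect to a given member. Given $\mu\in\cM_{\operatorname{sp}}(F_\ve)$ and invariant $\tilde\mu\ll\mu$ with Radon--Nikodym derivative $h$, one approximates $h$ in $L^1(\mu)$ by a smooth $h_\delta$, approximates $\mu$ weakly by a standard family, and observes that the weighted ``pairs'' $\vf\mapsto\mu_{\ell}(h_\delta\,\vf)$, while not standard (the density derivative may be too large), become genuine standard families after a bounded number of pushforwards, by \cite[Proposition~5.2]{DeL2}. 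Taking limits in the right order yields $\tilde\mu\in\cM_{\operatorname{sp}}(F_\ve)$. From this one deduces that every extremal point of the convex compact set $\cM_{\operatorname{sp}}(F_\ve)$ is ergodic (a non-ergodic extremal point would split as a convex combination of two members, contradicting extremality), and Krein--Milman then gives the ergodic decomposition inside $\cM_{\operatorname{sp}}(F_\ve)$. The idea you are missing is precisely this density-multiplication-and-regularization step; it replaces the pointwise generic-point reasoning entirely and is what makes your otherwise correctly identified ``subtle point'' go through.
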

\begin{proof}
Note that, by definition, $\cM_{\operatorname{sp}}(F_\ve)$ is a convex compact set. Hence, by Krein-Milman theorem, each measure in  $\cM_{\operatorname{sp}}(F_\ve)$ can be seen as the convex combination of its extremal points. On the other hand, if $\mu\in \cM_{\operatorname{sp}}(F_\ve)$ and $\tilde\mu$ is invariant and absolutely continuous with respect to $\mu$, then $\tilde\mu\in \cM_{\operatorname{sp}}(F_\ve)$. Indeed, let $h$ be the Radon-Nikodym derivative of $\tilde\mu$ with respect to $\mu$, then, for each $\vf\in\cC^0$ and $n\in\bN$,
\[
\tilde\mu(\vf)=\tilde\mu(\vf\circ F^n_\ve)=\mu(h\cdot\vf\circ F^n_\ve).
\]
By standard approximation arguments, for each $\delta>0$ there exists $h_{\delta}\in\cC^\infty$ such that $\|h-h_{\delta}\|_{L^1(\mu)}\leq \delta$, hence
\[
\tilde\mu(\vf)=\mu(h_{\delta}\cdot\vf\circ F^n_\ve)+\cO(\delta\|\vf\|_{\cC^0}).
\]
In addition, by definition, for each $\boldsymbol\epsilon>0$ there exists a standard family such that
\begin{equation}\label{eq:mu-fam-app}
\mu(\vf)=\sum_{\alpha\in\alphaset}\fm_\alpha\mu_{\ell_\alpha}(\vf)+\cO(\boldsymbol\epsilon \|\vf\|_{\cC^0}).
\end{equation}
Accordingly,
\[
\tilde\mu(\vf)=\sum_{\alpha\in\alphaset}\fm_\alpha\frac 1n\sum_{k=0}^{n-1} \mu_{\ell_\alpha}(h_{\delta}\cdot\vf\circ F^k_\ve)+\cO( (\delta+\boldsymbol\epsilon\|h_\delta\|\nc0)\|\vf\|_{\cC^0}).
\]
In general the measures $\mu_{\ell,\delta}(\vf)=\mu_\ell(h_{\delta}\vf)$ are not standard pairs because the derivative of the density might be too big, however their push-foward for large enough times will eventually be described by standard families \cite[Proposition 5.2]{DeL2}. Thus, taking first the limit for $n\to \infty$, then $\boldsymbol\epsilon\to 0$ and, finally, $\delta\to 0$, we see that $\tilde \mu\in  \cM_{\operatorname{sp}}(F_\ve)$, as claimed.

The above imply that if $\mu$ is an extremal point of $\cM_{\operatorname{sp}}(F_\ve)$, then it must be ergodic. If not, then there exists an invariant set $A\subset \bT^2$, $\mu(A)\not\in\{0,1\}$. We can then define the probability measures $\mu_1(\vf)=\mu(A)^{-1}\mu(\Id_A \vf)$ and $\mu_2(\vf)=[1-\mu(A)]^{-1}\mu(\Id_{A^c}\vf)$. By the previous arguments $\mu_i\in \cM_{\operatorname{sp}}(F_\ve)$, but this is impossible since $\mu$, being a convex combination of the $\mu_i$, would not be an extremal point, contrary to the hypothesis.
We have thus seen that the extremal points are ergodic, hence they provide the ergodic decomposition of the measures in $\cM_{\operatorname{sp}}(F_\ve)$, as claimed.
\end{proof}

Let $\mu\in\cM_{\operatorname{sp}}(F_\ve)$, then, for each $\boldsymbol\epsilon>0$, \eqref{eq:mu-fam-app} and \cite[Proposition 9.7, Lemma 11.3]{DeL1} imply that, denoting $h(\cdot,\theta)$ the density of $\mu_\theta$,
\[
\begin{split}
\mu(\vf)=\mu(\vf\circ F_\ve^n)=&\sum_{\alpha\in\alphaset}\fm_\alpha\int_{\bT} h(x,\theta^*_{\ell_\alpha})\vf(x,\theta^*_{\ell_\alpha})dx\\
&+\cO([\boldsymbol\epsilon+\ve n]\,[ \|\vf\|_{\cC^0}+\|\partial_\theta\vf\|_{\cC^0}]+e^{-\const n}\|\vf\|_{\cC^1}).
\end{split}
\]
Next, for all $I_m=[m\ve,(m+1)\ve)$, define $\cI_m=\{\alpha\in\alphaset\;:\; \theta^*_{\ell_\alpha}=\mu_{\ell_\alpha}(G_{\ell_\alpha})\in I_m\}$ and consider a measure
$\nu$ on $\bT^1$ such that $\nu(I_m)=\lim_{\boldsymbol\epsilon\to 0}\sum_{\alpha\in\cI_m}\fm_\alpha$. We can then write
\begin{equation}\label{eq:phys-strut}
\mu(\vf)=\int_{\bT^2}\vf(x,\theta) h(x,\theta) dx\,\nu(d\theta)+\cO(\ve \ln\vei \|\vf\|_{\cC^1}),
\end{equation}
where we have chosen $n=\Const \ln\vei$ and taken the limit $\boldsymbol\epsilon\to 0$. This reduces the problem of understanding the structure of the measure $\mu$ to the one of determining the measure $\nu$.

To gain some control on  $\nu$ we can repeat the same argument, but for the longer time $n=n_1+n_2=n_1+\Const \ln\vei$, with $n_1=\lfloor t\ve^{-1-\gamma}\rfloor$, for some $\gamma\geq 0$ and fixed $t$. We use again \eqref{eq:mu-fam-app} and call $\fkL_\alpha$ the standard pair that describes the push-forward at time $n_1$ of the standard pair $\ell_\alpha$,
\[
\mu(\vf)=\mu(\vf\circ F_\ve^{n})=\sum_{\alpha\in\alphaset}\fm_\alpha\sum_{\ell\in\fkL_\alpha}\fm_\ell\mu_\ell(\vf\circ F_\ve^{n_2})+\cO(\boldsymbol\epsilon \|\vf\|_{\cC^0}).
\]
By Corollary \ref{cor:third-step} it follows that for $\alpha\in \cI_m$, the standard family $\fkL_\alpha$ is made of standard pairs distributed as the process $\Beta(n)$. More precisely, let $p(\theta,t,\Beta(0))$ the probability distribution of such a $\Beta(n)$, and note that $|\partial_\theta p|+|\partial_{\Beta(0)} p|\leq \Const \ve^{-\frac 12}p$, then \cite[Lemma 11.3]{DeL1} and Corollary \ref{cor:third-step} imply, for each $\beta>0$,
\[
\mu(\vf)=\int_{\bT^3}p(\theta,t\ve^{-\gamma},\theta')\vf(x,\theta) h(x,\theta) dx\,\nu(d\theta')d\theta+\cO(\ve^{1/2-\gamma-\beta}\|\vf\|_{\cC^0}+\ve \ln\vei \|\vf\|_{\cC^1}).
\]
This, together with \eqref{eq:phys-strut}, yields, for all $\bar\vf\in\cC^1$,
\begin{equation}\label{eq:phys-strut-2}
\begin{split}
\int_{\bT}\bar\vf(\theta)\nu(d\theta)=&\int_{\bT^2}p(\theta,t\ve^{-\gamma},\theta')\bar \vf(\theta)  \nu(d\theta')d\theta\\
&+\cO(\ve^{1/2-\gamma-\beta}\|\bar\vf\|_{\cC^0}+\ve \ln\vei \|\bar\vf\|_{\cC^1}).
\end{split}
\end{equation}
To use effectively the above equation it is convenient to consider separately the two main cases.

\subsection{Rotations}\label{sec:rotation}
We consider first the case in which $\bar\omega$ has no zeroes. In such a case the averaged equation has also the unique invariant measure $\bar\omega(\theta)^{-1} d\theta$. So, if we let $T$ be the period, we can use \eqref{eq:phys-strut-2}, with $t\leq T$ and $\gamma=0$, as
\begin{equation}\label{eq:rotation-meas-1}
\begin{split}
\int_{\bT}\bar\vf(\theta)\nu(d\theta)=&\frac 1T\int_0^T\int_{\bT^2}p(\theta,t,\theta')\bar \vf(\theta)  \nu(d\theta')d\theta dt\\
&+\cO(\ve^{1/2-\beta}\|\bar\vf\|_{\cC^0}+\ve \ln\vei \|\bar\vf\|_{\cC^1}).
\end{split}
\end{equation}

By Lemma \ref{lem:second-step} we have
\[
\begin{split}
\int_{\bT^2}p(\theta,t,\theta')\bar \vf(\theta)  \nu(d\theta')d\theta &=\int_{\bT^2}  \frac{e^{-(\bar\theta(t,\theta')-\theta)^2/(2\Var_t^2\ve)}}{\Var_t\sqrt{2\pi\ve}}\bar \vf(\theta)  \nu(d\theta')d\theta +\cO(\ve^{\frac 12-\beta}\|\bar\vf\|\nc0) \\
&=\int_{\bT}  \bar \vf(\bar\theta(t,\theta'))  \nu(d\theta') +\cO(\ve^{\frac 12-\beta}\|\bar\vf\|\nc1).\\
\end{split}
\]
Substituting the above in \eqref{eq:rotation-meas-1} yields
\[
\begin{split}
\int_{\bT}\bar\vf(\theta)\nu(d\theta)&=\int_{\bT}  \frac 1T\int_0^T\bar \vf(\bar\theta(t,\theta')) dt \nu(d\theta') +\cO(\ve^{\frac 12-\beta}\|\bar\vf\|\nc1)\\
&=\int_{\bT} \frac{\bar \vf(\theta)}{\bar\omega(\theta)}d\theta+\cO(\ve^{\frac 12-\beta}\|\bar\vf\|\nc1).
\end{split}
\]
Thus all the elements of $\cM_{\operatorname{sp}}(F_\ve)$, and hence also the eventual physical measures, are very close to the invariant measure
of the averaged system. More precisely, we have proven:

\begin{prop}\label{prop:srb-rot} For each $\beta>0$, if $\zeroes=0$, then for each $\mu\in\cM_{\operatorname{sp}}(F_\ve)$ and $\vf\in\cC^1(\bT^2,\bR)$,
\[
\mu(\vf)=\int_{\bT^2} \vf(x,\theta) \frac{h(x,\theta)}{\bar\omega(\theta)}  dx\,d\theta+\cO(\ve^{1/2-\beta} \|\vf\|_{\cC^1}).
\]
\end{prop}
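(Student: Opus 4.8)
The statement should follow by combining the reduction already carried out in \eqref{eq:phys-strut} with an identification of the slow-variable measure $\nu$. Recall that \eqref{eq:phys-strut} gives, for every $\vf\in\cC^1(\bT^2,\bR)$, the representation $\mu(\vf)=\int_{\bT^2}\vf(x,\theta)h(x,\theta)\,dx\,\nu(d\theta)+\cO(\ve\ln\vei\,\|\vf\|_{\cC^1})$, where the choice $n=\Const\ln\vei$ makes the two error contributions of that step, of sizes $\ve n$ and $e^{-\const n}$, collapse to $\cO(\ve\ln\vei)$. Hence the proposition is equivalent to showing that, tested against $\cC^1$ functions, $\nu$ equals the (normalized) invariant measure of the averaged flow \eqref{eq:averageeq} up to an error $\cO(\ve^{1/2-\beta})$.

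To identify $\nu$ I would exploit the self-consistency relation \eqref{eq:phys-strut-2}. Since $\bar\omega$ has no zeroes, the flow generated by \eqref{eq:averageeq} is smoothly conjugate to a constant-speed rotation of $\bT$; write $T$ for its period. Taking $\gamma=0$ in \eqref{eq:phys-strut-2} and averaging the resulting identity over $t\in[0,T]$ --- the left-hand side being independent of $t$ --- gives \eqref{eq:rotation-meas-1}. In the transition term I would replace the law $p(\theta,t,\theta')$ of $\Beta(t)$ started at $\theta'$ by the law of $\Beta_0(t)=\bar\theta(t,\theta')+\sqrt\ve\,\deviation(t,\theta')$: Lemma \ref{lem:second-step} bounds the total-variation distance between the two by $\cO(\ve^{1/2-\beta})$, and $\Beta_0(t)$ is Gaussian with mean $\bar\theta(t,\theta')$ and variance $\ve\Var_t^2=\cO(\ve)$ uniformly on $[0,T]$. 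Integrating a $\cC^1$ observable against a Gaussian of standard deviation $\cO(\sqrt\ve)$ returns its value at the mean up to $\cO(\sqrt\ve\,\|\bar\vf\|_{\cC^1})$, so the transition term reduces to $\int_\bT\bar\vf(\bar\theta(t,\theta'))\,\nu(d\theta')$ modulo $\cO(\ve^{1/2-\beta}\|\bar\vf\|_{\cC^1})$. The change of variables $\theta=\bar\theta(t,\theta')$, $dt=d\theta/\bar\omega(\theta)$, then shows that $\tfrac1T\int_0^T\bar\vf(\bar\theta(t,\theta'))\,dt$ equals the integral of $\bar\vf$ against the invariant measure of the averaged flow, whose density is proportional to $\bar\omega^{-1}$; crucially this is independent of $\theta'$, so integrating against the probability measure $\nu$ leaves it unchanged. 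Substituting back into \eqref{eq:rotation-meas-1} and then into \eqref{eq:phys-strut} yields the claimed expression for $\mu(\vf)$.

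The only genuinely delicate point is the uniformity in $t$ across the whole interval $[0,T]$: the local limit estimate behind Lemma \ref{lem:second-step} is stated for $t$ bounded away from $0$, and for $t$ near $0$ the Gaussian approximation degenerates. This is harmless, because for small $t$ the transition kernel of $\Beta(t)$ is even more sharply concentrated around $\bar\theta(t,\theta')$ --- so the replacement of $p(\cdot,t,\theta')$ by the point mass at $\bar\theta(t,\theta')$ follows directly from the large deviation bounds of \cite{WeFr} with an error far smaller than $\ve^{1/2-\beta}$ --- while for $t$ bounded below one uses Lemma \ref{lem:second-step} as stated; the $t$-average glues the two regimes together. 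The remainder is routine bookkeeping to confirm that none of the accumulated error terms exceeds $\cO(\ve^{1/2-\beta}\|\vf\|_{\cC^1})$.
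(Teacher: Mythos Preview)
Your proposal is correct and follows essentially the same route as the paper's own argument: reduce to identifying $\nu$ via \eqref{eq:phys-strut}, average \eqref{eq:phys-strut-2} with $\gamma=0$ over $t\in[0,T]$ to obtain \eqref{eq:rotation-meas-1}, apply Lemma~\ref{lem:second-step} to replace the transition density by a Gaussian centered at $\bar\theta(t,\theta')$, collapse the Gaussian to a point mass at the cost of $\cO(\sqrt\ve\|\bar\vf\|_{\cC^1})$, and finish with the change of variables $dt=d\theta/\bar\omega(\theta)$. Your extra caution about the small-$t$ regime is harmless but in fact unnecessary, since Lemma~\ref{lem:second-step} is stated for all $t\in[0,\ve^{-\alpha}]$ and the Gaussian has standard deviation $\cO(\sqrt{\ve t})\le\cO(\sqrt\ve)$ uniformly on $[0,T]$.
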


The above result is good enough to compute the leading contribution to the Lyapunov exponent by arguing similarly to what we do in section \ref{sec:lyap} for the case in which there are sinks. However it does not suffices to investigate the mixing properties of the physical measure. In fact, Lemma \ref{lem:conver-no-zero} tells us that in the time $\ve^{-\gamma}$, $\gamma\leq 1/2$, the process $\Beta$ is still far from equilibrium.

We conjecture that in this case there exists a unique physical measure which mixes with speed $\ve^2$; but to prove such a result following the present strategy it would be necessary to improve the error in  \cite[Theorem 2.8]{DeL1}. More precisely we would need the to compute explicitly the first term in the Edgeworth expansion.

\subsection{Sinks}
Suppose that $\bar\omega$ has $2\zeroes$ non degenerate zeroes.
In such a case, Proposition \ref{prop: close to gaussians} and equation \eqref{eq:phys-strut-2}, choosing $\gamma\in (0,1/4)$, imply that there exists positive constants $\{c_i\}_{i=1}^n$ such that
\[
\int_{\bT}\bar\vf(\theta)\nu(d\theta)=\sum_{i=1}^nc_i\int_{\bT}\cG_i^{\ve}(\theta)\bar \vf(\theta)  d\theta+\cO(\ve^{1/2-2\gamma}\|\bar\vf\|_{\cC^0}+\ve \ln\vei \|\bar\vf\|_{\cC^1}).
\]
To compute the constant $c_i$ one must look at times longer than the metastability time scale. Indeed, by the large deviations results in \cite{DeL1} one can compute the probability to go from one sink to another. This analysis will show that, generically, there is a $c_i$ that is exponentially larger than the other, hence the invariant measure will look like a gaussian centred on the winning sink.  We will not pursue this issue further as it is not needed for our present discussions. For future reference, let us collect the result so far obtained.

\begin{prop}\label{prop:srb} If $\mu\in\cM_{\operatorname{sp}}(F_\ve)$, and $\zeroes>0$, then there exists $\bar c=\{c_k\}_{i=1}^\zeroes$, $c_k\geq 0$, $\sum_kc_k=1$, such that, for each $\vf\in\cC^1(\bT^2,\bR)$,
\[
\mu(\vf)=\sum_{k=1}^\zeroes c_k \int_{\bT^2} \vf(x,\theta) h(x,\theta) \cG^\ve_k(\theta) dx\,d\theta+\cO(\ve^{1/2-2\gamma}\|\vf\|_{\cC^0}+\ve \ln\vei \|\vf\|_{\cC^1}),
\]
where $\cG^\ve_k$ is a Gaussian distribution with mean $\theta_{k,-}$ and variance $\frac{ \bVar^2(\theta_{k,-})}{2 \bar\omega'(\theta_{k,-})\ve}$.
\end{prop}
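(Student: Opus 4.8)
The statement is a packaging of the reductions already carried out in this section, so the plan is simply to make that chain of implications explicit. The two substantive inputs are \eqref{eq:phys-strut}, which reduces $\mu$ to the one–dimensional probability measure $\nu$ on $\bT^1$ (via the disintegration into the SRB densities $h(\cdot,\theta)$), and the self–consistency relation \eqref{eq:phys-strut-2}, valid for any fixed $t>0$ and any $\gamma\ge 0$; here I would fix $\gamma\in(0,1/4)$ so that the exponent $1/2-2\gamma$ is positive and the bound is nonvacuous. The kernel $p(\cdot,t\ve^{-\gamma},\theta')$ appearing in \eqref{eq:phys-strut-2} is, by construction (Corollary \ref{cor:third-step}), the transition density of the Freidlin--Wentzell process $\Beta$ at rescaled time $t\ve^{-\gamma}$ started from $\theta'$, so the only fact needed about it is Proposition \ref{prop: close to gaussians}.

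Concretely, I would apply \eqref{eq:phys-strut-2} to the test function $\bar\vf(\theta):=\int_{\bT}\vf(x,\theta)h(x,\theta)\,\deh x$. Since $h(\cdot,\theta)$ and $\partial_\theta h(\cdot,\theta)$ are bounded (differentiability of the SRB densities, \cite[Section 8]{GL06}), one has $\|\bar\vf\|_{\cC^0}\le\Const\|\vf\|_{\cC^0}$ and $\|\bar\vf\|_{\cC^1}\le\Const\|\vf\|_{\cC^1}$, so the error terms of \eqref{eq:phys-strut-2} keep the stated order in $\vf$. For fixed $t$ and small $\ve$ the time $t\ve^{-\gamma}$ lies in the window $[\const|\log\ve|,\ve^{-\alpha}]$ as soon as $\alpha>\gamma$ (since $\ve^{-\gamma}\gg|\log\ve|$ eventually), so Proposition \ref{prop: close to gaussians} yields nonnegative numbers $c_1(\theta'),\dots,c_{\zeroes}(\theta')$, independent of $\bar\vf$ because it is a total–variation estimate, with $\sum_i c_i(\theta')=1$ and
\[
\deh_{TV}\Bigl(p(\cdot,t\ve^{-\gamma},\theta'),\ \sum_{i=1}^{\zeroes}c_i(\theta')\cG_i^\ve\Bigr)=\cO\bigl(\ve|\log\ve|^{3/2}\bigr)
\]
uniformly in $\theta'$. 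Plugging this into \eqref{eq:phys-strut-2}, setting $c_i:=\int_{\bT}c_i(\theta')\,\nu(\deh\theta')$ (so $c_i\ge0$ and $\sum_i c_i=\nu(\bT^1)=1$ since $\nu$ is a probability measure), and absorbing the $\ve^{1/2-\gamma-\beta}$ and $\ve|\log\ve|^{3/2}$ errors into $\ve^{1/2-2\gamma}$ by taking $\beta<\gamma$, one gets
\[
\int_{\bT}\bar\vf(\theta)\,\nu(\deh\theta)=\sum_{i=1}^{\zeroes}c_i\int_{\bT}\cG_i^\ve(\theta)\bar\vf(\theta)\,\deh\theta+\cO\bigl(\ve^{1/2-2\gamma}\|\bar\vf\|_{\cC^0}+\ve\ln\vei\|\bar\vf\|_{\cC^1}\bigr).
\]
Unfolding the definition of $\bar\vf$ and combining with \eqref{eq:phys-strut} (whose error $\ve\ln\vei\|\vf\|_{\cC^1}$ is of admissible size) yields exactly the asserted identity for $\mu(\vf)$.

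I do not expect a genuine obstacle: the heavy lifting — the metastable Gaussian description of $\Beta$ (Proposition \ref{prop: close to gaussians}) and the coupling of $\theta_\ve$ to $\Beta$ on polynomial time scales (Corollary \ref{cor:third-step}, already encoded in \eqref{eq:phys-strut-2}) — is done. The points that need a little care are bookkeeping ones: that $\bar\vf$ inherits the $\cC^1$ bound from $\vf$ (hence the use of differentiability of $h$ in $\theta$), that $t\ve^{-\gamma}$ falls inside the validity range of Proposition \ref{prop: close to gaussians}, and that the accumulated error terms collapse to $\cO(\ve^{1/2-2\gamma}\|\vf\|_{\cC^0}+\ve\ln\vei\|\vf\|_{\cC^1})$. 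The weights $c_i$ produced this way depend on $\mu$ and $\ve$ (and on the auxiliary choices $\gamma$, $t$) but not on $\vf$, which is all the statement requires.
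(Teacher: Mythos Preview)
Your proposal is correct and follows exactly the paper's own argument: the paper simply invokes Proposition~\ref{prop: close to gaussians} together with~\eqref{eq:phys-strut-2} (with $\gamma\in(0,1/4)$) to control $\nu$, and then combines this with~\eqref{eq:phys-strut}. You have merely made explicit the bookkeeping (the definition of $\bar\vf$, the $\cC^1$ bound via regularity of $h$, the time-window check, and the absorption of error terms) that the paper leaves implicit.
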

Remark that the above proposition essentially provides an effective formula for computing the Lyapunov exponent for measures in $\cM_{\operatorname{sp}}(F_\ve)$, as we will see in the next section.

In \cite{DeL2} it is proven that if the measures in $\cM_{\operatorname{sp}}(F_\ve)$ have negative Lyapunov exponents, then there exists finitely many physical measures, and a checkable criteria for uniqueness is provided. A similar result for non negative Lyapunov exponents is missing, although, as already mentioned, in \cite{Tsujii-acta} it is proven that the physical measures exist generically.

\section{Lyapunov exponents}\label{sec:lyap}
In the previous section we have see that all the invariant measures obtained by the pushforward of a standard pair, must be $\ve^{\beta_1}$ close to each other in the $(\cC^1)'$ topology. Hence they may differ substantially only at a scale smaller than $\ve$. It remains open the question if the SRB measure always exists and if it is unique or not. To discuss such issues it seems necessary to have some information on the Lyapunov exponent of the central foliation. Recall from \cite[Section 3]{DeL2} that the $n$-step central direction is defined by
\[
\deh_p F_{\ve}^n(\stable_n(p),1)=\mu_n(0,1)
\]
from which it follows
\begin{equation}\label{eq:central-d}
\begin{split}
&\stable_n(p)=\Xi_p(\stable_{n-1}(F_\ve(p)))=\frac{[1+\ve\partial_\theta\omega(p)]\stable_{n-1}(F_\ve(p))-\partial_\theta f(p)}{\partial_x f(p)-\ve\partial_x\omega(p)\stable_{n-1}(F_\ve(p))}\\
&\mu_n(p)=\prod_{k=0}^n\left[ 1+\ve(\partial_x\omega(p_k)\stable_{n-k}(p_k)+\partial_\theta\omega(p_k))\right],
\end{split}
\end{equation}
where $p_k=F_\ve^k(p)$.
Note that, for $\ve$ small enough, there exists $K>0$ and $\sigma\in(0,1)$, such that, for each $p\in\bT^2$, $\Xi_p([-K,K])\subset [-K,K]$ and  $\sup_{\stable\in [-K,K]}|\Xi_p'(\stable)|\leq \sigma$. From this it follows that there exists $\hat\stable(p)$ such that
\[
|\hat\stable(p)-\stable_n(p)|\leq \sigma^n.
\]
The 1-dimensional line field $(1,\hat s(p))$ is called the central distribution which we denote~$E^c(p)$. It is known to be $F_\ve$-invariant and continuous in~$p$.

Then, setting
\[
\hat\mu_n(p)=\prod_{k=0}^n\left[ 1+\ve(\partial_x\omega(p_k)\hat\stable(p_k)+\partial_\theta\omega(p_k))\right],
\]
we have
\begin{equation}\label{eq:lyap-est-0}
|\mu_n(p)-\hat\mu_n(p)|\leq \Const \ve\hat\mu_n(p).
\end{equation}
We thus have that the central Lyapunov exponent is given by the ergodic average
\[
\chi_c(p)=\lim_{n\to\infty}\frac 1n \ln\mu_n=\lim_{n\to\infty}\frac 1n \ln\hat\mu_n.
\]
Next, by Lemma \ref{lem:ergodicity}, we can restrict ourselves to considering only  $\mu\in \cM_{\operatorname{sp}}(F_\ve)$ ergodic.
Then, the Birkhoff ergodic theorem imply that $\mu$ almost surely
\begin{equation}\label{eq:lyapunov}
\chi_c(p)=\mu\left(\ln \left[ 1+\ve(\partial_x\omega\cdot \hat\stable +\partial_\theta\omega )\right]\right).
\end{equation}
Taking the limit $n\to \infty$ in \eqref{eq:central-d} we have
\[
\hat\stable(p)=-\sum_{k=0}^\infty\frac{\partial_\theta f(F_\ve^k(p))}{\prod_{j=0}^k\partial_x f(F_\ve^j(p))}+\cO(\ve)=\stable_0(p)+\cO(\ve).
\]
Next, we obtain an even more explicit expression. Indeed, by \cite[Lemma 4.1]{DeL1}, for each $k\leq n\leq \Const \sqrt\ve$, we can write, for $p=(x_0,\theta_0)$,
\[
\begin{split}
&p_k=(f_{\theta_0}^k(Y_n(x_0)),\theta_0)+\cO(\ve k)\\
&\left\|1-Y_n'\right\|\leq\Const \ve n^2.
\end{split}
\]
We can then choose $n=C\ln\ve^{-1}$, for $C$ large enough, hence
\[
s_0(p)=-\sum_{k=0}^{C\ln\ve^{-1}}\frac{\partial_\theta f(f_{\theta_0}^k(Y_n(x_0)),\theta_0)}{(f_{\theta_0}^k)'(Y_n(x_0))}+\cO(\ve(\ln\ve^{-1})^3)
\]
Then, by Proposition \ref{prop:srb} and \eqref{eq:lyapunov}, we have
\[
\begin{split}
\ve^{-1}\chi_c(p)&=\mu\left(\partial_x\omega\cdot \stable_0 +\partial_\theta\omega\right)+\cO(\ve)\\
&=\sum_jc_j\mu_{\theta_{j,-}}\left(\partial_x\omega(\cdot,\theta_{j,-})\cdot \stable_0(\cdot,\theta_{j,-}) +\partial_\theta\omega(\cdot,\theta_{j,-})\right)+\cO(\sqrt\ve),
\end{split}
\]
for some $c_j > 0$, $\sum c_j = 1$, to be determined. Note that $\mu_{\theta_j,-}$ is absolutely continuous with respect to Lebesgue and its density $h_j$ is in $\cC^1$.
Thus we can write
\[
\begin{split}
&\mu_{\theta_{j,-}}\left(\partial_x\omega(\cdot,\theta_{j,-})\cdot \stable_0(\cdot,\theta_{j,-})+\partial_\theta\omega(\cdot,\theta_{j,-})\right)\\
&=\int h_j\circ Y_n^{-1}(x)\left[\partial_x\omega(Y_n^{-1}(x),\theta_{j,-})\cdot \stable_0(Y_n^{-1}(x),\theta_{j,-})+\partial_\theta\omega(Y_n^{-1}(x),\theta_{j,-})\right]\\
&\quad+\cO(\ve(\ln\ve^{-1})^2)\\
&=\int h_j(x)\left[\partial_x\omega(x,\theta_{j,-})\cdot \stable_0(Y_n^{-1}(x),\theta_{j,-})+\partial_\theta\omega(x,\theta_{j,-})\right]+\cO(\ve(\ln\ve^{-1})^2)\\
&=-\sum_{k=0}^{\infty}\int h_j(x)\left[\partial_x\omega(x,\theta_{j,-})\frac{\partial_\theta f(f_{\theta_{j,-}}^k(x),\theta_{j,-})}{(f_{\theta_{j,-}}^k)'(x)}\right]+
\mu_{\theta_{j,-}}\left(\partial_\theta\omega(\cdot,\theta_{j,-})\right)\\
&\quad+\cO(\ve(\ln\ve^{-1})^3).
\end{split}
\]
We arrive then to the rather explicit formula
\begin{equation}\label{eq:lyap-final}
\begin{split}
\ve^{-1}\chi_c(p)=&\sum_jc_j\mu_{\theta_{j,-}}\left(\partial_\theta\omega(\cdot,\theta_{j,-})\right)\\
&-\sum_jc_j\sum_{k=0}^{\infty}\mu_{\theta_{j,-}}\left(\partial_x\omega(\cdot,\theta_{j,-})\frac{\partial_\theta f(f_{\theta_{j,-}}^k(\cdot),\theta_{j,-})}{(f_{\theta_{j,-}}^k)'}\right)+\cO(\sqrt\ve).
\end{split}
\end{equation}
For a measure $\mu\in \cM_{\operatorname{sp}}(F_\ve)$ let $R_\mu(F_\ve)$ be the set of points in the support of $\mu$ for which the Lyapunov exponent exists. Then our argument shows that \eqref{eq:lyap-final} holds for each $p\in \cup_{\mu\in \cM_{\operatorname{sp}}} R_\mu(F_\ve)=:R(F_\ve)$.

\subsection{Skew product case}\label{ss:skew-products-lyap}
In the special case when the map~$F_\ve$ is a skew product
$$
F_\ve(x,\theta) = (f(x), \theta+\ve \omega(x,\theta)),
$$
all the terms~$\partial_\theta f$ in~\eqref{eq:lyap-final} are zero and thus the formula~\eqref{eq:lyap-final} reduces to
$$
\ve^{-1}\chi_c(p)=\sum_jc_j\mu_{\theta_{j,-}}\left(\partial_\theta\omega(\cdot,\theta_{j,-})\right) +\cO(\sqrt\ve).
$$
In addition, we have $c_j > 0$ and $\partial_\theta\omega(x,\theta_{j,-}) < 0$ for all $j$ and any $x$. Thus $\chi_c(p) < 0$ for $\ve$ small enough.

\subsection{A counterintuitive example}
\label{ss:desimoi-example}
We discuss in detail an example introduced, but not conclusively studied, in \cite{DeL1}.
Let $\ell\in\bN$, $\ell > 1$, $\alpha\in\bR$, $\beta>0$ and consider the family
\begin{equation}\label{eq:expanding-contracting}
  F_\ve(x,\theta)=(\ell x + \sin(2\pi\theta)\left[\alpha\sin(2\pi
    x)+\beta\sin(2\ell\pi x) \right], \theta + \ve\cos(2\pi x))\mod 1.
\end{equation}
In the above examples $\omega(x,\theta)=\cos(2\pi x)$ does not depend on $\theta$.
In \cite{DeL1} is computed
\[
  \bar\omega'(\theta)=
 \sum_{k=1}^\infty\int_{\bT}(\omega\circ f_\theta^k(x))'\frac{\partial_\theta
    f(x,\theta)}{f'_\theta(x)}\rho_\theta(x)\deh x.
\]
Observe that if $\theta=0$ or $\theta=1/2$ (so that $\sin(2\pi\theta)=0$), then
$f_\theta(x)=\ell x$, thus $\mu_\theta=\Leb$, $\bar\omega(\theta)=0$ and
\[
\bar\omega'(0)= -{(2\pi)^2}\sum_{k=1}^\infty\ell^{k-1}\int_\bT\sin(2\ell^k\pi x)[\alpha\sin(2\pi
  x)+\beta\sin(2\ell\pi x)]=
  -2\pi^2\beta.
\]
Then $\theta=0$ is a sink for the averaged dynamics and it turns out to be the only one.\footnote{ We refrain from proving it but it is not very hard to check it numerically.} Accordingly, \eqref{eq:lyap-final} yields, for all $p\in R(F_\ve)$,
\[
\begin{split}
\ve^{-1}\chi_c(p)&=4\pi^2\sum_{k=0}^{\infty}\int_{\bT}\sin(2\pi x)\frac{\alpha\sin(2\pi \ell^k x)+\beta\sin(2\pi\ell^{k+1} x)}{\ell^k}+\cO(\sqrt\ve)\\
&=2\pi^2\alpha+\cO(\sqrt\ve).
\end{split}
\]
As suggested in \cite{DeL1}, we thus see that,  for $\alpha>0$, the central Lyapunov exponent is positive, although the average dynamics tends to concentrate the motion in a very small neighbourhood of zero. This seems to be counterintuitive and has an interesting implication that we are going to discuss in the next section.

In addition, $\chi_c > 0$ also holds for small perturbations of~\eqref{eq:expanding-contracting}. Indeed, the locus where the most of mass of the physical measure sits is a~$\sqrt\ve$-band around a zero of~$\bar\omega(\theta)$. This locus depends on~$F_\ve$ continuously. The derivative~$DF|_{E^c}$ in the central direction depends on~$F_\ve$ continuously, too. Thus:

\begin{prop}	\label{prop:lyap-positive}
For any~$l \in \bN$, $l > 1$, and $\alpha, \beta>0$ there exists $\ve>0$ and a $C^1$-open set $\cU_{\ve}$, that contains the maps $F_{\ve'}$ defined in~\eqref{eq:expanding-contracting} with the given $l,\alpha,\beta$ for all $0\leq \ve'\leq\ve$, such that for any~$F \in \cU_{\ve}$ and $p\in R(F)$ we have $\chi_c(p) > 0$.
\end{prop}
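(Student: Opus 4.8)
The plan is to read off positivity from the explicit formula \eqref{eq:lyap-final} for the unperturbed maps, and then to transport it to a $C^1$-neighbourhood using only the \emph{continuity} of the three ingredients of that formula: the centre bundle $E^c$, the one-step centre derivative $DF|_{E^c}$, and the U-Gibbs measures $\cM_{\operatorname{sp}}(F)$. The point is that for $F_\ve$ as in \eqref{eq:expanding-contracting} one has a definite margin, $\ve^{-1}\chi_c(p)=2\pi^2\alpha+\cO(\sqrt\ve)$ uniformly over $p\in R(F_\ve)$ (Section~\ref{ss:desimoi-example}), so once $\ve$ is frozen small enough that $\chi_c(p)\ge \pi^2\alpha\ve$ for all $p\in R(F_\ve)$ (and, with the same $\ve$, $\chi_c(p)\ge\pi^2\alpha\ve'$ along the whole path $F_{\ve'}$, $0<\ve'\le\ve$), there is room to absorb perturbation errors.

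Next I would set up the neighbourhood. The dominated splitting $T\bT^2=E^c\oplus E^u$ of $F_\ve$, with $E^u$ uniformly expanded and $DF_\ve|_{E^c}=1+\cO(\ve)$, persists in a $C^1$-neighbourhood via invariant cone fields: there is $\cU_\ve$, $C^1$-open, such that every $F\in\cU_\ve$ carries a continuous splitting $E^c_F\oplus E^u_F$ with the same rates, $E^c_F$ being the unique $DF$-invariant line field inside a narrow cone about $E^c$. Both $F\mapsto E^c_F$ and $F\mapsto\bigl(p\mapsto\log|DF|_{E^c_F}(p)|\bigr)$ are continuous into $C^0$, so after shrinking $\cU_\ve$ we may assume $\bigl\|\log|DF|_{E^c_F}|-\log|DF_\ve|_{E^c}|\bigr\|_{C^0}\le \tfrac1{10}\pi^2\alpha\ve$ for all $F\in\cU_\ve$; note also that, exactly as for $F_\ve$, $\log|DF_\ve|_{E^c}|=\ve\psi_*+\cO(\ve^2)$ with $\psi_*=\partial_x\omega\,\stable_0+\partial_\theta\omega\in\cC^1$ of $C^1$-norm $\cO(1)$ (recall $\hat\stable=\stable_0+\cO(\ve)$ with $\stable_0$ a convergent, $\cC^1$-in-$x$ series).

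The crucial step is to control $\cM_{\operatorname{sp}}(F)$. By Proposition~\ref{prop:srb} (here $\zeroes=1$, whence $c_1=1$) and Lemma~\ref{lem:ergodicity}, every ergodic element of $\cM_{\operatorname{sp}}(F_\ve)$ is, in the $(\cC^1)'$-topology, within $\cO(\ve^{1/2-2\gamma})$ of the product $\mu^{\mathrm{SRB}}_{\theta_{1,-}}\otimes\delta_{\theta_{1,-}}$ of the SRB measure of $f_{\theta_{1,-}}$ with the point mass at the sink $\theta_{1,-}$; and $\int\psi_*\,d(\mu^{\mathrm{SRB}}_{\theta_{1,-}}\otimes\delta_{\theta_{1,-}})=\bar\psi_*(\theta_{1,-})=2\pi^2\alpha$ by the computation of Section~\ref{ss:desimoi-example}. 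On the other hand $F\mapsto\cM_{\operatorname{sp}}(F)$ is upper semicontinuous (weak-$*$ limits of U-Gibbs measures of maps converging to $F_\ve$ are U-Gibbs for $F_\ve$), and since the unit ball of $\cC^1$ is $\cC^0$-compact this upper semicontinuity also holds in the $(\cC^1)'$-metric. Hence, shrinking $\cU_\ve$ once more, every ergodic $\mu\in\cM_{\operatorname{sp}}(F)$ with $F\in\cU_\ve$ is $(\cC^1)'$-close to $\mu^{\mathrm{SRB}}_{\theta_{1,-}}\otimes\delta_{\theta_{1,-}}$ up to $\cO(\ve^{1/2-2\gamma})$ plus a quantity that tends to $0$ as $\cU_\ve$ shrinks. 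I expect this to be the main obstacle: a general $C^1$-perturbation $F$ is \emph{not} of the fast--slow form \eqref{eq:f-eps}, so Proposition~\ref{prop:srb} does not apply to it directly, and one genuinely needs that no U-Gibbs measure charging the region away from the sink appears under perturbation — that is, the upper semicontinuity of $F\mapsto\cM_{\operatorname{sp}}(F)$, which in turn forces $\cU_\ve$ to be taken open in the $C^1$ topology within the space of (at least $C^{1+}$, say $C^5$) maps.

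Finally I would conclude. For $p\in R(F)$, Lemma~\ref{lem:ergodicity} gives
\[
\chi_c(p)=\int_{\bT^2}\log|DF|_{E^c_F}|\;d\mu
\]
for the ergodic $\mu\in\cM_{\operatorname{sp}}(F)$ carried by the orbit of $p$. Replacing $\log|DF|_{E^c_F}|$ by $\log|DF_\ve|_{E^c}|=\ve\psi_*+\cO(\ve^2)$ costs at most $\tfrac1{10}\pi^2\alpha\ve$ in $C^0$ (Step on $\cU_\ve$ above); replacing $\mu$ by $\mu^{\mathrm{SRB}}_{\theta_{1,-}}\otimes\delta_{\theta_{1,-}}$ costs $\ve\|\psi_*\|_{C^1}\times((\cC^1)'\text{-distance})+\cO(\ve^2)$, i.e. $\cO(\ve^{3/2-2\gamma})$ plus a term below $\tfrac1{10}\pi^2\alpha\ve$ once $\cU_\ve$ is small. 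Hence $\chi_c(p)\ge 2\pi^2\alpha\ve-\tfrac15\pi^2\alpha\ve-\cO(\ve^{3/2-2\gamma})\ge\tfrac12\pi^2\alpha\ve>0$, provided $\ve$ was chosen small enough at the outset and $\cU_\ve$ small enough thereafter. Replacing $\cU_\ve$ by the union of such neighbourhoods along the $C^1$-path $\ve'\mapsto F_{\ve'}$ (with radii shrinking to $0$ as $\ve'\downarrow0$) then gives the $C^1$-open set required in the statement, containing all the $F_{\ve'}$.
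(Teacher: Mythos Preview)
Your approach matches the paper's: its entire argument is the short paragraph immediately preceding the proposition, which simply asserts that the locus where most of the mass of the physical measure sits (a $\sqrt\ve$-band around the zero of $\bar\omega$) and the centre derivative $DF|_{E^c}$ both depend continuously on the map, and then states the result. You carry out this continuity scheme carefully, and in doing so you identify and resolve a point the paper leaves implicit: a general $C^1$-perturbation need not be of the fast--slow form~\eqref{eq:f-eps}, so Proposition~\ref{prop:srb} does not apply to it directly, and one genuinely needs the upper semicontinuity of $F\mapsto\cM_{\operatorname{sp}}(F)$ in the weak-$*$ topology to guarantee that U-Gibbs measures of the perturbed map remain close to $\mu_{\theta_{1,-}}\otimes\delta_{\theta_{1,-}}$. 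That is the right fix, and it is what makes the $C^1$-openness honest rather than merely openness within the fast--slow class.

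One caveat concerning the endpoint $\ve'=0$: your final union of neighbourhoods, with radii shrinking to zero as $\ve'\downarrow0$, does not contain $F_0$, and indeed no $C^1$-open set on which $\chi_c>0$ can contain $F_0$, since $\chi_c\equiv0$ for $F_0$ and for every nearby map of the form $(x,\theta)\mapsto(g(x,\theta),\theta)$. The inclusion of $\ve'=0$ in the statement appears to be a slip shared by the paper; both your argument and the paper's deliver the conclusion for $0<\ve'\le\ve$.
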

In particular, if $F\in\cU_\ve$ has a physical measure, then it must have positive Lyapunov exponents.

\section{The central foliation}\label{sec:central-fol}

In Section~\ref{sec:lyap} we have seen that, contrary to naive intuition, it is possible that the central Lyapunov exponent~$\chi_c$ is positive, despite having a statistical sink. To make things worse, we prove below that~$F_\ve$ has an invariant foliation made of smooth compact leaves tangent to the central distribution. If~$\chi_c > 0$, these leaves have to expand in average but at the same time their length is uniformly bounded.

The reason why this is not contradictory  is that the center foliation fails to be absolutely continuous.
This means that, despite each leaf being individually smooth, the foliation as a whole is very wild. This situation is strange but known to happen, see the papers of Ruelle, Shub and Wilkinson~\cite{SW,  RW} where they presented an open set of volume preserving partially hyperbolic systems with non absolutely continuous central foliation for a perturbation of the product of an Anosov map by an identity map on the circle. This behaviour was later observed in many other partially hyperbolic systems, see \cite{Baraviera2003, Hirayama2007, Ponce2012, Ponce2013, Saghin2009, Varao2013}.

In our class of dynamical systems, we find a similar phenomenon. Namely, let~$\cU_{\ve}$ be given by Proposition~\ref{prop:lyap-positive}. Then for every~$F \in \cU_{\ve}$ we have~$\chi_c > 0$.
\begin{thm}\label{thm:foli-main}
For every map~$F$ from~$\cU_{\ve}$
\begin{enumerate}
\item\label{i:central-integrable} the central distribution~$E^c$ is uniquely integrable to a $C^1$ foliation~$\sW^c$;
\item\label{i:central-smooth} if, in addition, $F$ has $j$-pinching, $j \ge 1$, see Subsection~\ref{ss:central-foli-exists}, then~$\sW^c$ is $C^j$;
\item\label{i:cpt-leaves} every leaf~$W \in \sW^c$ is diffeomorphic to a circle and of uniformly bounded length;
\item\label{i:foli-non-ac} if $F$ has a physical measure, then $\sW^c$ is not absolutely continuous.
\end{enumerate}
\end{thm}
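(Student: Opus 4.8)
The plan is to establish the four statements in order, since each builds on the previous one. For \eqref{i:central-integrable}, I would invoke the general theory of normally hyperbolic foliations: the central distribution $E^c$ is dominated by the strong unstable direction (the $x$-expansion by at least $\lambda>1$), and since $F_\ve$ is $\cC^5$ and the domination is strong (the contraction/expansion in $E^c$ is $1+\cO(\ve)$, hence arbitrarily close to neutral while the $x$-direction expands by $\lambda$), the $r$-domination bunching condition of Hirsch--Pugh--Shub is satisfied for $\ve$ small. This gives a $\cC^1$ invariant center foliation $\sW^c$ tangent to $E^c$; unique integrability follows because $E^c$ is one-dimensional and $\cC^1$-close to the foliation by vertical circles of $F_0$'s skew structure (alternatively, one-dimensional continuous line fields that are invariant under a partially hyperbolic map with the right bunching are uniquely integrable). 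For \eqref{i:central-smooth}, I would cite the higher-regularity part of the same normal-hyperbolicity machinery: under $j$-pinching (to be spelled out in Subsection~\ref{ss:central-foli-exists}), the bunching inequalities hold with enough room that the invariant foliation is $\cC^j$.

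For \eqref{i:cpt-leaves}, the key observation is that for $\ve=0$ the map $F_0(x,\theta)=(f(x,\theta),\theta)$ preserves each circle $\{\theta=\mathrm{const}\}$, so these circles are exactly the center leaves and they are compact of length $1$. For small $\ve$, the center leaves of $F_\ve$ are $\cC^1$-close (in fact $\cC^0$-close with controlled tangent directions, since $\hat s = s_0 + \cO(\ve)$ and $s_0$ is the center slope of $F_0$) to these vertical circles; the structural stability of the compact normally hyperbolic lamination implies that each leaf $W\in\sW^c$ is a $\cC^1$ circle, and its length stays within $\cO(\ve)$ of $1$, hence uniformly bounded. I would make this quantitative by noting that a leaf is a graph $x\mapsto(x,\varphi(x))$ over... no, rather $\theta\mapsto(\gamma(\theta),\theta)$ with $|\gamma'|=|\hat s|\le K$, so it projects bijectively to the $\theta$-circle and has length $\le\sqrt{1+K^2}$.

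The main obstacle, and the heart of the theorem, is \eqref{i:foli-non-ac}. Here I would argue by contradiction combined with the Lyapunov-exponent computation. Suppose $F\in\cU_\ve$ has a physical measure $\mu$ and suppose $\sW^c$ were absolutely continuous. By Proposition~\ref{prop:lyap-positive}, $\chi_c(p)>0$ for every $p\in R(F)$, in particular $\mu$-a.e.; so $\mu$ is a hyperbolic measure with both exponents positive, i.e. a \emph{repelling} measure along the center. On the other hand, if the center foliation is absolutely continuous one can disintegrate $\mu$ along center leaves and run a Pesin-type argument: positivity of $\chi_c$ forces the conditional measures on center leaves to be atomic (a standard consequence — an expanding one-dimensional dynamics cannot have a nonatomic invariant conditional measure unless there is also some contraction, which there is not since $E^c$ expands), but absolute continuity of the foliation plus nonatomic transverse behaviour (guaranteed because $\mu$, being physical, gives positive measure to a set meeting a positive-length set of leaves and its own conditionals on unstable leaves are absolutely continuous by the SRB property) forces the conditionals on center leaves to be nonatomic — a contradiction. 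Equivalently, and perhaps cleaner: absolute continuity of $\sW^c$ together with the SRB property in the $x$-direction would make $\mu$ absolutely continuous with respect to Lebesgue on $\bT^2$; but then, since all Lyapunov exponents of $\mu$ are positive, $\mu$ would be a smooth invariant measure for a map that is expanding in every direction, which is impossible on a compact manifold (the Jacobian of $F$ along $\mu$ would have to equal $1$ a.e. by invariance of Lebesgue, contradicting $\sum_i\chi_i=\int\log|\det DF|\,d\mu>0$). I would present this last version as the main line, filling in that absolute continuity of the center foliation upgrades "SRB along unstables" to "absolutely continuous on $\bT^2$'' via a Fubini argument over the two transverse absolutely continuous foliations $\sW^u$ and $\sW^c$. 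The delicate point to get right is the measurable-foliation bookkeeping — ensuring the disintegrations genuinely multiply to give a.c. on the total space — so I would state the needed absolute-continuity/Fubini lemma carefully (citing \cite{SW, RW} for the analogous mechanism) rather than reproving it.
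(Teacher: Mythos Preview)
Your proof of~\ref{i:foli-non-ac} has a genuine gap. The argument you present as the ``main line'' --- that absolute continuity of $\sW^c$ would force $\mu$ to be absolutely continuous on $\bT^2$, and that this contradicts $\sum_i\chi_i=\int\log|\det DF|\,d\mu>0$ --- fails because $F$ is a non-invertible endomorphism. The identity $\int\log|\det DF|\,d\mu=0$ holds for diffeomorphisms preserving an absolutely continuous measure (since $\log|\det DF|$ becomes a coboundary of $\log\rho$), but not for endomorphisms: already for $x\mapsto \ell x$ on $\bT^1$ with Lebesgue measure the integral is $\log\ell>0$. More generally, for an a.c.\ invariant measure of an endomorphism the Rokhlin formula gives $h_\mu(F)=\int\log|\det DF|\,d\mu$, and Pesin's formula gives $h_\mu(F)=\sum\chi_i^+$; when all exponents are positive these agree and there is no contradiction. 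Your alternative (conditionals on center leaves must be atomic) is also not a standard fact in this non-invertible, non-uniformly-expanding-along-center setting, and you would need to supply an argument.

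The paper's argument is quite different and exploits the piece you already proved in~\ref{i:cpt-leaves}: the leaves have \emph{uniformly bounded length} and $F$ is 1--1 on each leaf. If $\sW^c$ were absolutely continuous, then since the basin of $\mu$ has positive Lebesgue measure, a Fubini argument gives a positive-Lebesgue set of leaves on which the Lyapunov-regular set $\Lambda$ has positive leafwise length. For such a leaf $W$, injectivity on leaves yields $\vol W(F^n(z))\ge\int_{W\cap\Lambda}|\mathrm{Jac}_{\sW^c}F^n|\,dm$, and Jensen plus Fatou show this grows like $e^{n\chi_c\cdot m(W\cap\Lambda)}\to\infty$, contradicting the uniform length bound. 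This is the missing idea.

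Two smaller points. In~\ref{i:cpt-leaves} your identification of the $\ve=0$ center leaves as $\{\theta=\mathrm{const}\}$ is wrong --- those circles are the \emph{unstable} leaves (the $x$-direction is expanding). The actual center leaves at $\ve=0$ are the graphs $\theta\mapsto(h(x,\theta),\theta)$ of the conjugacies between the expanding maps $f(\cdot,\theta)$; your self-correction to ``graph over $\theta$ with slope $|\hat s|\le K$'' is the right picture and matches the paper's length bound. For~\ref{i:central-integrable}--\ref{i:central-smooth}, invoking HPS directly is reasonable but you must address non-invertibility (lift to the universal cover, as the paper does); the paper instead routes through Brin's criterion --- quasi-isometry of the unstable foliation in the cover --- which in this setting is immediate from the cone structure, and then gets unique integrability and $C^r$ leaves from that.
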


\begin{rem}
As already explained, according to Tsujii \cite{Tsujii-acta}, the existence of the physical measure is generic and hence it holds generically for $F \in \cU_{\ve}$. Accordingly, the above Theorem implies that generically $\sW^c$ is not absolutely continuous. In fact, it is quite possible that $\sW^c$ is always not absolutely continuous as it should be possible to extend to this, non invertible, case \cite[Theorem A]{ABV00} and hence show that there is always at least one physical measure.
\end{rem}

For possible future use we will prove many of the above results in much larger generality than stated in Theorem \ref{thm:foli-main}. Let us start by describing such a more general setting.

By a $k$-dimensional \emph{$C^r$ foliation}~$\sW$ of $M$, $r \ge 1$, we mean a partition of $M$ into $k$-dimensional, complete, connected~$C^1$ submanifolds $W(z) \ni z$, called \emph{leaves}, which depend continuously on~$z$. Let~$D^n$ denote the open unit ball in~$\bR^n$. For each point~$z \in M$ there is a \emph{coordinate chart} (or \emph{foliation box})~$(U, \phi)$ at $z$: a neighborhood~$U \ni z$ and a homeomorphism~$\phi\colon D^k \times D^{m-k} \to U$ such that for each~$p \in D^{m-k}$, the set~$W_U(\phi(0,p)) = \{ \phi(z,p) \}_{z \in D^k}$, called the \emph{local leaf}, is contained in~$W(\phi(0,p))$ and $\phi(\cdot,p) \colon D^k \to W(\phi(0, p))$ is a $C^r$ diffeomorphism which depends continuously on $p \in D^{m-k}$ in $C^1$ topology.

Given a foliation~$\sW$, denote by~$d_W$ the distance along the leaf~$W \in \sW$,  by~$W(z)$ the leaf passing through~$z \in M$, and by~$W_\delta (z)$ the ball of radius $\delta$ in~$W(z)$ centered in~$z$. A foliation of a simply connected Riemannian manifold~$\tilde M$ is called \emph{quasi-isometric}~\cite{Fenley1992} if there are $a, b > 0$ such that for any $z_1, z_2 \in W(z_1)$, holds
$$
d_W(z_1, z_2) \le a \cdot d(z_1,z_2) + b.
$$

A foliation~$W(z)$ is tangent to a distribution~$E(z)$ if for every $z \in M$ we have $T_z W(z) = E(z)$. A distribution $E$ is called \emph{integrable} if there exists a foliation tangent to~$E$, and \emph{uniquely integrable} if such foliation is unique.
The existence theorem for solutions of ODEs implies that for every continuous distribution~$E$, $\dim E = 1$, and for every~$z \in M$ there exists a local curve~$\gamma \ni z$ tangent to $E$. However, this $\gamma$ may not be unique and thus there may be no way to construct a global foliation of these curves, see for instance~\cite{ArXiv-Hertz2014}.
To assert~$\gamma$ is unique the distribution must have greater regularity, such a Lipshitz.
For~$\dim E \ge 2$ the classic Frobenius Theorem indicates that even infinitely smooth distributions may fail to be integrable, let alone uniquely integrable. Thus the question of integrability of the central distribution is very important in the theory of partially hyperbolic dynamical systems.

The analogue of uniqueness of solutions of ODEs in the world of distributions is the following property. A continuous $k$-dimensional distribution~$\sW$ is called \emph{locally uniquely integrable} if for each $z \in M$ there are $k$-dimensional $C^1$-submanifold~$W_{loc}(z)$ and~$\alpha(z) > 0$ such that every piecewise $C^1$ curve $\sigma \colon [0,1] \to M$ satisfying (i) $\sigma(0) = z$, (ii) $\dot \sigma(t) \in E(\sigma(t))$ for $t \in [0,1]$, and (iii) length$(\sigma) < \alpha(z)$, is contained in~$W_{loc}(z)$. Obviously, if~$\sW$ is locally uniquely integrable, then it is integrable and the integral foliation is unique. In addition, we say a distribution is \emph{$C^r$ locally uniquely integrable} if every~$W_{loc}(z)$ is a $C^r$ submanifold, $r \ge 1$.

Now let~$F\colon M \to M$ be a $C^1$ local diffeomorphism, not necessary 1-1 globally. We say that a foliation~$\sW$ is \emph{invariant} under $F$ if for every sufficiently small local leaf~$W$ of $\sW$ its image~$F(W)$ is also a local leaf of~$\sW$. Obviously, if $\sW$ is invariant under $F$, then its tangent distribution~$T\sW$ is invariant under $DF$. The converse is also true if $T\sW$ is uniquely integrable.

For every $F$-invariant measure~$\mu$ and every $F$-invariant foliation~$\sW$ the \emph{leafwise volume Lyapunov exponent} $\chi_{\sW}$ is well-defined for $\mu$-a.e. $z \in M$:
$$
\chi_{\sW}(z) = \lim_{n\to\infty} \frac1n \log \left| \det DF^n|_{T_z\sW} (z) \right|.
$$
If $F$ is a partially hyperbolic endomorphism, see Subsection~\ref{ss:central-foli-exists}, and its central distribution $E^c$ is uniquely integrable to~$\sW^c$, then $\chi_{\sW} = \chi_c$, where $\chi_c$ is the central Lyapunov exponent in case~$\dim E^c = 1$ and the sum of the central Lyapunov exponents in case~$\dim E^c \ge 2$.

In the following subsections we prove some results which are not only sufficient to prove claims~\ref{i:central-integrable}--\ref{i:foli-non-ac} but go well beyond.

\subsection{Claims \ref{i:central-integrable}-\ref{i:central-smooth}: the central foliation exists and is unique}	\label{ss:central-foli-exists}

Let~$F \colon M \to M$ be a $C^l$ local diffeomorphism, perhaps non-invertible globally. Let~$\tF \colon \tM \to \tM$ be a lift of $F$ to the universal cover~$\tM$. The map~$\tF$ is a 1-1 local diffeomorphism and thus a global diffeomorphism. In this paper, we say $F$ is a \emph{partially hyperbolic endomorphism} if $\tF$ has a uniform dominated splitting with a strong unstable bundle: there are constants $0 < \lambda_1  \le \lambda_2 < \mu_1 \le \mu_2$, $\mu_1 > 1$, and $C \ge  1$ and distributions~$\tE^c(\tz)$, $\tE^u(\tz)$, called \emph{center} and \emph{unstable}, respectively, such that for every~$\tz \in \tM$
\begin{itemize}
\item $T_\tz \tM = \tE^c(\tz) \oplus \tE^u(\tz)$;
\item the distributions~$\tE^c$, $\tE^u$ are invariant under~$D\tF$;
\item $C^{-1} \lambda_1^n \| v^c \| \le \| D\tF^n (\tz) v^c \| \le C \lambda_2^n \| v^c \|$ for each $v^c \in \tE^c(\tz)$ and $n > 0$;
\item $C^{-1} \mu_1^n \| v^u \| \le \| D\tF^n (\tz) v^u \| \le C \mu_2^n \| v^u \|$ for each $v^u \in \tE^u(\tz)$ and $n > 0$;
\end{itemize}
Denote $r = \max\{j \in \{1,...,l\}\,|\,\lambda_2^j < \mu_1\}$, the latter inequality sometimes being called the \emph{$j$-pinching} condition.

In ~\cite[Section 3]{DeL2} is proved the existence of unstable and center invariant cone fields for~$F_\ve$ for every~$\ve \le \ve_0$. This implies that the maps in $\cU_{\ve}$ are partially hyperbolic endomorphisms.

As opposed to the central distribution~$\tE^c$, the unstable distribution~$\tE^u$ of a $C^l$ diffeomorphism is known to be $C^l$ uniquely integrable.

The following theorem, a version of Brin's~\cite[Theorem 1]{Brin2003} for endomorphisms, establishes a connection between the geometry of the unstable foliation and the integrability of the central distribution.

\begin{thm}\label{t:central-integrable}
Let~$F$ be a partially hyperbolic endomorphism of a compact manifold~$M$. Suppose the unstable foliation of the lift~$\tF$ is quasi-isometric in the universal cover~$\tM$. Then the distribution~$E^c$ is $C^r$ locally uniquely integrable; in particular, $F$ has a unique central foliation and it is $C^r$.
\end{thm}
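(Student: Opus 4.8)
\emph{Plan of proof.} The plan is to follow Brin's integrability criterion \cite[Theorem~1]{Brin2003}, executed on the universal cover $\tM$, where the lift $\tF$ is a genuine diffeomorphism and hence negative iterates $\tF^{-n}$ are available; the statement for $F$ then follows by pushing the canonical construction back down to $M$. The two dynamical facts that drive the argument are the domination $\lambda_2<\mu_1$ (with $\mu_1>1$) and the quasi-isometry of the unstable foliation of $\tF$ in $\tM$.

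First I would invoke the standard invariant--manifold theory (Hirsch--Pugh--Shub), in the version valid for the diffeomorphism $\tF$ of $\tM$: for each $\tz\in\tM$ there is a local center manifold $W^c_{loc}(\tz)$ tangent to $\tE^c(\tz)$, of size bounded below uniformly in $\tz$ (by compactness of $M$), of class $C^r$ with $r=\max\{j\le l:\lambda_2^j<\mu_1\}$ the optimal pinching exponent, and depending continuously on $\tz$. This furnishes existence of integral submanifolds and their leafwise regularity; the real content of the theorem is \emph{uniqueness}, i.e. that every short piecewise-$C^1$ curve tangent to $\tE^c$ issued from $\tz$ stays in $W^c_{loc}(\tz)$. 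Thus it suffices to show that any two $C^1$ submanifolds $\gamma_1,\gamma_2\ni\tz$ tangent to $\tE^c$, each of intrinsic diameter $\le\alpha$ for a small uniform $\alpha$, coincide near $\tz$ (in the present application $\dim\tE^c=1$, so these are curves).

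To prove this I would argue by contradiction. If $\gamma_1$ and $\gamma_2$ disagree arbitrarily close to $\tz$, then, since both are transverse to the unstable foliation of $\tF$, there is a point $w_1\in\gamma_1\setminus\gamma_2$ close to $\tz$ whose local unstable leaf meets $\gamma_2$ in a single point $w_2\ne w_1$; set $\rho=d(w_1,w_2)>0$. For every $n\ge0$, since $D\tF$ preserves $\tE^c$ and $\|D\tF^n v^c\|\le C\lambda_2^n\|v^c\|$, the submanifold $\tF^n\gamma_i$ is again tangent to $\tE^c$ with intrinsic diameter $\le C\lambda_2^n\alpha$, so (both $\tF^n\gamma_i$ passing through $\tF^n\tz$) $d(\tF^n w_1,\tF^n w_2)\le 2C\lambda_2^n\alpha$; as $\tF^n w_1$ and $\tF^n w_2$ sit on a common unstable leaf, quasi-isometry yields $d_{W^u}(\tF^n w_1,\tF^n w_2)\le 2aC\lambda_2^n\alpha+b$. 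On the other hand, pulling a length-minimizing unstable path from $\tF^n w_1$ to $\tF^n w_2$ back by $\tF^{-n}$ — which contracts $\tE^u$ vectors by at least the factor $C^{-1}\mu_1^{-n}$, since $\|D\tF^n v^u\|\ge C^{-1}\mu_1^n\|v^u\|$ — produces an unstable path from $w_1$ to $w_2$, whence $d_{W^u}(\tF^n w_1,\tF^n w_2)\ge C^{-1}\mu_1^n\,d_{W^u}(w_1,w_2)\ge C^{-1}\mu_1^n\rho$. Combining the two bounds,
\[
C^{-1}\mu_1^n\rho\;\le\;2aC\lambda_2^n\alpha+b;
\]
dividing by $\mu_1^n$ and letting $n\to\infty$, the right-hand side tends to $0$ because $\lambda_2/\mu_1<1$ and $\mu_1>1$, forcing $\rho=0$, a contradiction. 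Hence $\gamma_1$ and $\gamma_2$ agree near $\tz$, which is exactly $C^r$ local unique integrability of $\tE^c$ on $\tM$.

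Finally I would assemble the global object and descend: local unique integrability makes the local center manifolds overlap coherently, so they patch into a foliation $\tilde\sW^c$ of $\tM$ with complete connected $C^r$ leaves depending continuously on the base point; since $D\tF\,\tE^c=\tE^c$, local uniqueness forces $\tF$ to carry center leaves to center leaves, and the canonicity of the construction makes $\tilde\sW^c$ invariant under the deck group, so it descends to a unique $F$-invariant $C^r$ central foliation $\sW^c$ of $M$, for which $\chi_{\sW}=\chi_c$. I expect the main obstacle to be not the dynamical estimate above — which is Brin's and transfers almost verbatim once one has the lift — but the careful execution of the center-manifold/plaque-family machinery in the \emph{non-invertible} setting with the sharp $C^r$ regularity under $r$-pinching, together with the verification that the resulting family of $C^r$ plaques is genuinely a $C^r$ foliation in the sense of the definition above (continuous foliation charts with $C^r$ plaques); checking completeness of the global leaves and compatibility with the deck transformations also requires some bookkeeping.
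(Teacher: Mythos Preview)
Your proposal is correct and follows essentially the same route as the paper: the paper explicitly defers to Brin's argument \cite[Theorem~1]{Brin2003} for the $C^1$ local unique integrability (which is exactly the quasi-isometry contradiction you spell out) and then invokes invariant-manifold theory for the $C^r$ leaf regularity under $r$-pinching. The only cosmetic difference is that the paper cites \cite[Chapter~1, Theorem~4.10]{Hasselblatt2006} applied to the inverse limit of $F$ for the $C^r$ upgrade, whereas you obtain it from Hirsch--Pugh--Shub on the universal cover; since $\tF$ is already a diffeomorphism with uniformly bounded geometry (coming from compactness of $M$), both routes are equivalent.
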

We do not provide the proof explicitly as the proof of $C^1$ local unique integrability is literally identical to the proof of the unique integrability of~$E^{cs}$ there. The additional $C^r$ regularity of the leaves follows from~\cite[Chapter 1, Theorem 4.10]{Hasselblatt2006}, applied to the inverse limit system for $F$. This proves claim~\ref{i:central-smooth}.

The following lemma, a version of~\cite[Proposition  4]{Brin2003} gives an elegant sufficient condition for a foliation to be quasi-isometric. Again, the proof follows Brin's word for word.

\begin{lem}		\label{l:quasi-isom-criterion}
Let~$\sW$ be a $k$-dimensional foliation of the $m$-dimensional space~$\bR^m$. Suppose there is an~$(m-k)$-dimensional plane~$A$ such that~$T_{\tz} W(\tz) \cap A = \emptyset$ for each~$\tz \in \bR^m$. Then~$\sW$ is quasi-isometric.
\end{lem}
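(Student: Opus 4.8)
The plan is to follow Brin's proof of \cite[Proposition 4]{Brin2003} essentially verbatim. First I would normalise coordinates: an orthogonal change of coordinates on $\bR^m$ is an isometry, so it alters neither the conclusion nor the hypothesis, and I may assume $A=\{0\}^k\times\bR^{m-k}$; let $p\colon\bR^m\to\bR^k$ be the orthogonal projection onto the complementary subspace $B=\bR^k\times\{0\}^{m-k}$. The hypothesis $T_{\tz}W(\tz)\cap A=\{0\}$ says precisely that $\ker(dp)$ meets no leaf tangent space, i.e. $dp$ is injective on each $T_{\tz}W(\tz)$; hence for every leaf $W\in\sW$ the restriction $p_W\colon W\to\bR^k$ is a $C^1$ local diffeomorphism, and $\|p(v)\|\le\|v\|$ on $T_{\tz}W(\tz)$ because $p$ is $1$-Lipschitz.

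The only point that uses more than pointwise transversality — and the one genuinely delicate step — is a \emph{uniform} lower bound: there is $c_0\in(0,1]$ with $\|p(v)\|\ge c_0\|v\|$ for every $\tz\in\bR^m$ and every $v\in T_{\tz}W(\tz)$. The function $\tz\mapsto\min\{\|p(v)\|\st v\in T_{\tz}W(\tz),\,\|v\|=1\}$ is continuous (the tangent distribution is continuous) and everywhere positive (by the hypothesis, since $\ker p=A$); in the setting where this lemma is applied — $\sW$ being the lift to the universal cover of a foliation of a compact manifold, e.g. the unstable foliation of $\tF$ over $\bT^2$ in Theorem \ref{t:central-integrable}, so that its tangent distribution is periodic under the deck group — this function is periodic, hence bounded below by a positive $c_0$. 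Everything that follows is bookkeeping with $c_0$.

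Granting $c_0$, I would show each $p_W$ is a covering map, hence — $\bR^k$ being simply connected and $W$ connected — a diffeomorphism. Indeed, $p_W$ is a local diffeomorphism with the path-lifting property: to lift a $C^1$ path $\gamma\colon[0,1]\to\bR^k$ starting from a chosen point of $p_W^{-1}(\gamma(0))$, extend the unique local lift $\tilde\gamma$ on a maximal subinterval; from $p\circ\tilde\gamma=\gamma$ one gets $p(\tilde\gamma')=\gamma'$, so $\|\tilde\gamma'\|\le c_0^{-1}\|\gamma'\|$ and $\tilde\gamma$ has finite length; by completeness of the leaf it converges at the right endpoint, where the local-diffeomorphism property extends the lift, so the maximal subinterval is all of $[0,1]$. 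Finally, for $z_1,z_2$ on a common leaf $W$ set $x_i=p(z_i)$ and lift the straight segment from $x_1$ to $x_2$ to a path $\eta$ in $W$ issuing from $z_1$; since $p_W$ is a bijection its endpoint is $z_2$, and the same estimate gives $\textup{length}(\eta)\le c_0^{-1}\|x_1-x_2\|\le c_0^{-1}d(z_1,z_2)$ as $p$ is $1$-Lipschitz. Therefore $d_W(z_1,z_2)\le c_0^{-1}d(z_1,z_2)$, which is stronger than quasi-isometry — one may take $a=c_0^{-1}$ and $b=0$.
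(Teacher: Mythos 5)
Your route is the same as the paper's: the paper offers no argument of its own for this lemma, saying only that the proof follows \cite[Proposition~4]{Brin2003} word for word, and what you write is exactly that argument (project along $A$ onto a complementary $k$-plane, use a lower bound $\|p(v)\|\ge c_0\|v\|$ on leaf tangent vectors to lift segments with length controlled by $c_0^{-1}$, conclude that $p_W$ is a covering of $\bR^k$ and hence a diffeomorphism by simple connectivity, and read off $d_W(z_1,z_2)\le c_0^{-1}d(z_1,z_2)$). The lifting/covering bookkeeping, including the use of completeness of the leaves on the maximal interval of definition of a lift, is correct and at least as detailed as the source it replaces.

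The substantive point is the one you flagged yourself: the uniform constant $c_0$ is not a consequence of the hypotheses as printed, and your proof obtains it from the application (periodicity of the lifted tangent distribution under the deck group) rather than from the lemma. This is not a cosmetic issue, because pointwise transversality $T_{\tz}W(\tz)\cap A=\{0\}$ alone does not imply quasi-isometry: the foliation of $\bR^2$ by the parabolas $\{y=x^2+c\}$ has every tangent line transverse to the vertical axis, yet the leafwise distance between $(-R,R^2+c)$ and $(R,R^2+c)$ grows like $R^2$ while their Euclidean distance is $2R$, so no bound $d_W\le a\,d+b$ can hold. Thus what you have proved is a corrected statement in which the transversality is uniform (angle between $T_{\tz}W(\tz)$ and $A$ bounded away from zero, or a periodic/compactly generated tangent distribution); that corrected statement is exactly what the paper needs, since in the application the lifted unstable directions lie in a cone uniformly disjoint from the central cone, so $c_0>0$ exists. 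Strictly speaking, though, your argument does not prove the lemma as literally stated, and no argument can; if you want your write-up self-contained, put the uniform transversality into the hypotheses of the lemma instead of importing it from the way the lemma is later used.
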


To prove claim~\ref{i:central-integrable} of Theorem~\ref{thm:foli-main} it is now sufficient to show

\begin{prop}
For any map $F\in\cU_{\ve}$ the unstable foliation~$\sW^u$ of the lift~$\tF$ to the universal cover~$\tM = \bR^2$ satisfies the assumption of Lemma~\ref{l:quasi-isom-criterion} with~$k=1$, $m=2$.
\end{prop}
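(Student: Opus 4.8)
The plan is to exhibit the required plane $A$ explicitly as the vertical axis of $\bR^2$, the key point being that the unstable directions of $\tF$ are uniformly close to the horizontal (the fast $x$-direction) and hence uniformly transverse to the vertical.

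First I would record the shape of the unstable cone field. By \cite[Section 3]{DeL2} the map $F_\ve$ of \eqref{eq:f-eps} carries a $DF_\ve$-invariant, uniformly expanded unstable cone field; since the expanding (fast) direction is the $x$-direction and $\|\omega\|_{\cC^4}$ is bounded, a direct computation with $DF_\ve=\bigl(\begin{smallmatrix}\partial_x f&\partial_\theta f\\ \ve\partial_x\omega&1+\ve\partial_\theta\omega\end{smallmatrix}\bigr)$ shows that this cone field can be taken of the form $\cC^u(z)=\{(u,v)\in T_z\bT^2\st |v|\le C_u|u|\}$ for a constant $C_u$ that is uniform in $z\in\bT^2$ and in $\ve\le\ve_0$ (in fact one may take $C_u=\cO(\ve)$). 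Invariance together with uniform expansion of a cone field is a $C^1$-open condition, so, shrinking $\cU_\ve$ if necessary — which is harmless, since Proposition~\ref{prop:lyap-positive} only asserts the existence of \emph{some} $C^1$-open set — every $F\in\cU_\ve$ admits a $DF$-invariant unstable cone field contained in $\{(u,v)\st|v|\le C_u|u|\}$ with a common constant $C_u$.

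Next I would lift to the universal cover $\tM=\bR^2$. The cone field lifts to a $\bZ^2$-periodic cone field for $\tF$, still contained in $\{|v|\le C_u|u|\}$, and the unstable bundle $\tE^u(\tz)$, being $D\tF$-invariant and expanded, necessarily lies inside this invariant cone: $\tE^u(\tz)\subseteq\{(u,v)\st|v|\le C_u|u|\}$ for every $\tz\in\bR^2$. The leaves of $\sW^u$ are tangent to $\tE^u$, so $T_{\tz}W^u(\tz)=\tE^u(\tz)\subseteq\{|v|\le C_u|u|\}$ as well. Finally I would take $A=\{0\}\times\bR$, the $1$-dimensional plane spanned by the $\theta$-direction: any nonzero $(0,t)\in A$ has $|t|>0=C_u\cdot 0$ and therefore does not belong to $\{|v|\le C_u|u|\}$, hence not to $T_{\tz}W^u(\tz)$. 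Thus $T_{\tz}W^u(\tz)\cap A=\{0\}$ for every $\tz\in\bR^2$, i.e. these two lines are complementary, which is precisely the hypothesis of Lemma~\ref{l:quasi-isom-criterion} with $k=1$, $m=2$.

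There is essentially no serious obstacle here; the only points that deserve care are (i) extracting from \cite[Section 3]{DeL2} that the unstable cone field is a genuinely narrow cone about the horizontal direction with a constant uniform in $\ve$, and (ii) that this narrowness passes, with a uniform constant, to every $F$ in a sufficiently small $C^1$-neighbourhood — both of which are routine, the latter being legitimate because the set $\cU_\ve$ furnished by Proposition~\ref{prop:lyap-positive} may be taken as $C^1$-small as we please.
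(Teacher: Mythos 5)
Your argument is correct and is essentially the paper's own proof: using the flat structure of $\bT^2$ and $\bR^2$, the unstable directions $\tE^u(\tz)$ all lie in the (narrow, nearly horizontal) unstable cone of $F$, hence avoid the vertical direction, which one takes as $A$. The only cosmetic difference is that you re-derive the uniform unstable cone and invoke $C^1$-openness (shrinking $\cU_\ve$), whereas the paper simply uses that every $F\in\cU_\ve$ already carries the invariant unstable and central cone fields of \cite[Section 3]{DeL2}, so any direction in the central cone works as $A$.
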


\begin{proof}
The metrics on~$\bT^2$ and $\bR^2$ are flat and the connections are trivial. Thus we can trivially identify all the tangent spaces $T_z \bT^2$, $z \in \bT^2$, and $T_\tz \bR^2$, $\tz \in \bR^2$. The union of all possible~$\tE^u(\tz)$, $\tz \in \bR^2$, is a subset of the unstable cone for~$F$ and thus avoids the central cone for~$F$. Thus any direction within the central cone, including the vertical direction, works as $A$.
\end{proof}

\begin{rem}
This straightforwardly generalizes to the maps of form~\eqref{eq:f-eps} in any dimension. Thus all such maps have locally uniquely integrable central distributions.
\end{rem}

\subsection{Claim~\ref{i:cpt-leaves}: central leaves are compact and have uniformly bounded volume}	\label{ss:bounded-volume}
To prove claim~\ref{i:cpt-leaves}, we need more assumptions on~$M$ and $F$. Let $C_*>0$ some arbitrary, but fixed, constant and $M_1,M_2$ be compact Riemannian manifolds. Given $M = M_1 \times M_2$, let  $\bF_\ve(M_1,M_2)$,  $\ve \le \ve_0$ , be the set of a partially hyperbolic endomorphisms $F \colon M \to M$, $\|F\|_{\cC^2}\leq C_*$, of the form
\begin{equation}	\label{eq:f-eps-gen}
F \colon (x, \theta) \mapsto (f(x,\theta), \Omega(x,\theta)), \quad x \in M_1, \theta \in M_2, \quad \dist(\Omega(x,\theta), Id) \le \ve,
\end{equation}
and assume $f(\cdot,\theta)$ is strictly expanding in~$x$ for every~$\theta \in M_2$. Note that we have $\cU_\ve\subset \bF_\ve(\bT^1,\bT^1)$.

Clearly, there is~$\ve_0 > 0$ such that for every~$\ve \le \ve_0$ the set~$\bF_\ve(M_1,M_2)$ is made of partially hyperbolic endomorphism in the sense of Subsection~\ref{ss:central-foli-exists}. Then by Theorem~\ref{t:central-integrable}, every~$F\in\bF_\ve(M_1,M_2)$ has a unique smooth central foliation~$W^c(z)$.

\begin{thm}	\label{t:bounded-volume}
For every~$\ve \le \ve_0$ there is~$V > 0$ such that for any~$z \in M$
\begin{itemize}
\item $W^c(z)$ is homeomorphic to~$M_2$;
\item $\vol(W^c(z)) < V$.
\end{itemize}
\end{thm}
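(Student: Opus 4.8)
The plan is to lift everything to the universal cover, where the central distribution becomes an honest graph over the $\tM_2$-direction, and then to descend to $M$; the only delicate point is the topology of the leaves.

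First I would recall that, by Theorem~\ref{t:central-integrable}, for $\ve\le\ve_0$ the central distribution $E^c$ of any $F\in\bF_\ve(M_1,M_2)$ is uniquely integrable to a $\cC^r$ foliation $\sW^c$; let $\tF\colon\tM\to\tM$, $\tM=\tM_1\times\tM_2$, be the lift and $\tilde\sW^c$ the lifted foliation. Since $\dist(\Omega,\mathrm{Id})\le\ve$, $\|F\|_{\cC^2}\le C_*$ and $f(\cdot,\theta)$ is uniformly expanding, the dominated–splitting constants can be chosen uniformly over $\bF_\ve(M_1,M_2)$, and $\tilde E^c$ lies in a cone of width $O(\ve)$ around the $\tM_2$-direction. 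Hence every leaf of $\tilde\sW^c$ is the graph of a $\cC^r$ map $\tilde g\colon\tM_2\to\tM_1$ with $\mathrm{Lip}(\tilde g)\le K=K(\ve)$ uniformly; in particular $\pi_2$ restricts to a $\cC^r$ diffeomorphism from each leaf of $\tilde\sW^c$ onto $\tM_2$ (lifting a path in $\tM_2$ into the leaf solves an ODE whose solution has length at most $\sqrt{1+K^2}$ times that of the base path, so, the leaves being complete, it exists globally).

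Next, for the topology of the leaves on $M$: $\pi_2$ restricts to a local homeomorphism $W^c(z)\to M_2$, which one checks is a covering map (descending the universal–cover picture), and whose number of sheets is locally constant in $z$, hence constant on the connected manifold $M$. To prove that it has exactly one sheet I would compare $F$ with $F_0:=(f,\mathrm{Id})\in\bF_0(M_1,M_2)$, which is $\cC^2$-$\ve$-close to $F$. Because $F_0$ preserves every slice $M_1\times\{\theta\}$, it preserves its central foliation $\sW^c_{F_0}$ and acts as the identity on the $M_2$-factor of it; the uniform expansion of $f(\cdot,\theta)$ then forces the holonomy of $\sW^c_{F_0}$ around loops in $M_2$ to be trivial, so $\sW^c_{F_0}$ is a fibration with leaves diffeomorphic to $M_2$ and $F_0$ is normally hyperbolic along it with trivial (hence isometric, plaque–expansive) leaf dynamics. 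Persistence of normally hyperbolic laminations (Hirsch--Pugh--Shub, applied on $\tM$, where $\tF_0$ and $\tF$ are diffeomorphisms) then yields a homeomorphism of $M$ close to the identity carrying $\sW^c_{F_0}$ onto an $F$-invariant foliation, which by the uniqueness in Theorem~\ref{t:central-integrable} must be $\sW^c$. Consequently every leaf of $\sW^c$ is homeomorphic to $M_2$; in particular it is compact, as claimed. The volume bound is then immediate: through $\pi_2$ each $W^c(z)$ is the graph of a map $M_2\to M_1$ with Lipschitz constant at most $K=K(\ve)$, so
\[
\vol(W^c(z))\le (1+K^2)^{\dim M_2/2}\,\vol(M_2)=:V,
\]
a bound depending only on $\ve$ (through the uniform cone width) and on $M_1,M_2$, not on $z$ nor on the particular map in $\bF_\ve(M_1,M_2)$.

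\textbf{The main obstacle} is precisely the single-sheetedness: showing that the central leaves are genuine embedded copies of $M_2$ rather than nontrivial covers. This is the only place where the dynamics enters, through the triviality of the holonomy of $\sW^c_{F_0}$ (which rests on the expansion of $f(\cdot,\theta)$) and the persistence of the resulting normally hyperbolic fibration. One must also verify that the Hirsch--Pugh--Shub conjugacy can be chosen equivariantly enough — namely with respect to the subgroup $\{\mathrm{id}\}\times\pi_1(M_2)\subset\pi_1(M)$ — to descend from $\tM$ to $M$; this follows from the uniqueness of that conjugacy once one observes that the deck transformations in $\{\mathrm{id}\}\times\pi_1(M_2)$ commute with both $\tF_0$ and $\tF$.
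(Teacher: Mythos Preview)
Your approach coincides with the paper's in its skeleton: describe $\sW^c_{F_0}$ for $\ve=0$, transport it to $\sW^c_F$ via HPS-type structural stability of the foliation, and then read off the volume bound from the central cone. Two points differ, one cosmetic and one substantive.

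The cosmetic one: for $\ve=0$ the paper gives the central leaves explicitly. Since each $f(\cdot,\theta)$ is expanding, structural stability of expanding maps yields a conjugacy $h(\cdot,\theta)$ between $f(\cdot,0)$ and $f(\cdot,\theta)$ with $h(\cdot,0)=\mathrm{id}$; the graphs $\theta\mapsto h(x,\theta)$ are the central leaves of $F_0$, and the \emph{uniqueness} of the conjugacy is exactly your ``trivial holonomy around loops in $M_2$''. So your holonomy argument is correct but the paper's description is more concrete.

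The substantive one: you assert that $F$ and $F_0=(f,\mathrm{Id})$ are $\cC^2$-$\ve$-close and then invoke HPS once. The hypothesis in the paper is only $\dist(\Omega,\mathrm{Id})\le\ve$ together with the uniform bound $\|F\|_{\cC^2}\le C_*$; the former is a $\cC^0$ condition, so the derivatives of $\Omega$ are merely $O(C_*)$, not $O(\ve)$, and a single HPS perturbation step is not justified. The paper circumvents this by a compactness argument: the $\cC^1$-closure $\overline{\bF}(M_1,M_2)$ of the whole family is compact, every map in it has a structurally stable central foliation (Proposition~\ref{prop:strut-stab}, which is proved by lifting to $\tM$ exactly as you propose), so one covers $\overline{\bF}(M_1,M_2)$ by finitely many structural-stability balls and chains the resulting leaf-homeomorphisms from some $F_0\in\bF_0$ to $F$. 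This finite-cover step is the one ingredient missing from your write-up; once you insert it in place of the unjustified $\cC^2$-closeness, your argument and the paper's are the same.
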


\begin{proof}
We prove it first for the special case $\ve=0$ where we have an explicit description of the central
foliation of each $F_0\in \bF_0(M_1,M_2)$. Recall that every smooth expanding map is structurally stable. Thus all the maps $f(\cdot, \theta)$ are conjugated. Let $h(\cdot,\theta)$ be the map that conjugates $f(\cdot, 0)$ with $f(\cdot, \theta)$. By definition, $h(x,0)=h(x,1)=x$.
The graph of~$h(x, \cdot)$ is a compact submanifold of~$M$ homeomorphic to~$M_2$.
Thus we have a continuous foliation of~$M$ by the graphs of~$h(x, \cdot)$. This foliation is invariant under~$F_0$ and must coincide with the central foliation~$\sW^c$ for $F_0$. In particular, the leaves of $\sW^c$ are compact smooth submanifolds of~$M$ homeomorphic to~$M_2$.

To prove Theorem~\ref{t:bounded-volume} for $\ve > 0$, we will use the structural stability of the foliations. Following~\cite{Bonatti2004}, we say the central foliation~$\sW^c_F$ of the map~$F$ is \emph{structurally stable} if, given any nearby $C^1$ map~$G$,
\begin{enumerate}
\item the central distribution of~$G$ uniquely integrates to the central foliation~$\sW^c_G$;
\item there exists a globally defined homeomorphism $h_G$ sending leaves of $\sW^c_F$ to leaves of $\sW^c_G$;
\item $h_G \circ F \circ h_G^{-1}$ is isotopic to $G$ along the leaves.
\end{enumerate}

\begin{prop}\label{prop:strut-stab} For any~$\ve \le \ve_0$ and $F\in\bF_\ve(M_1,M_2)$ the foliation~$(F, \sW^c_F)$ is structurally stable.
\end{prop}

\begin{proof}
We lift~$F$ and~$\sW_F^c$ to the universal cover of~$M$ and use Theorem~(7.1) from~\cite{HPS1977}.
\end{proof}

Since $\overline \bF(M_1,M_2)$, the $\cC^1$ closure of $\bF(M_1,M_2)$, is compact in the $\cC^1$ topology, we can use Proposition \ref{prop:strut-stab} to cover it with finitely many balls of structural stability. We conclude that there exists a globally defined homeomorphism $h$ sending leaves of $\sW^c_{F_0}$ to leaves of $\sW^c_{F}$ and $h \circ F_0 \circ h^{-1}$ is isotopic to $F$ along the leaves. In particular, the leaves of~$\sW^c_{F}$ are compact smooth submanifolds of~$M$ homeomorphic to~$M_2$.

Moreover, because at every point~$z \in M$ the central space~$E^c(z) \subset T_z M$ belongs to the same cone~$K^c = \{ (\xi, \Beta) \in T_z M_1 \oplus T_z M_2 \,|\, |\xi| \le \gamma^c |\Beta| \}$, this can be proven as in~\cite[Section 3]{DeL2}, by Pythagoras Theorem for every~$z \in M$ we have
$$
\vol (W(z)) \le \sqrt{1+\gamma^2} \cdot \vol(M_2).
$$
\end{proof}

The above proof yields an interesting by-product consequence for families of expanding maps which, although folklore, we couldn't find stated explicitly in the literature.
\begin{rem}
Let~$f_\theta\colon M \to M$, $\theta \in (-\theta_0, \theta_0)$, be a smooth family of expanding maps, $C^r$ jointly in~$x$ and $\theta$. Let~$h(x,\theta)$ be the conjugacy map as above. Then $h(x,\theta)$ is $C^r$ smooth in~$\theta$.
\end{rem}
\begin{proof}
Apply the above argument to the endomorphism $F(x, \theta) = (f_\theta (x), \theta))$. Note that~$F$ has $r$-pinching because~$\lambda_2 = 1$. This implies that the graphs of~$h(x, \cdot)$ are~$C^r$-smooth, see \cite{HPS1977}.
\end{proof}
Of course, as we will see in the Subsections \ref{subsec:no-abs} and \ref{ss:eps-0}, one cannot expect~$h(x,\theta)$ to be smooth in~$x$, or even absolutely continuous.

\subsection{Claim~\ref{i:foli-non-ac}: the central foliation is not absolutely continuous}\label{subsec:no-abs}
Let $M$ be a Riemannian manifold equipped with a continuous foliation~$\sW$.
Denote by~$\Leb$ the Lebesgue measure on $M$ coming from the Riemannian volume. It follows from the classic works of Rokhlin that for any foliation box\footnote{As defined after Theorem~\ref{thm:foli-main}}~$\cB$ there exists a disintegration of $\Leb|_\cB$ into the transversal measure~$\tilde \mu_\cB$ and leafwise conditional measures~$\nu_{W,\cB}$ defined for~$\tilde\mu$-almost every leaf disk~$W_\cB$ within the box. The measures coming from different boxes are equivalent on their common domain so we drop the index~$\cB$ for brevity.

Since every leaf~$W \in \sW$ is a smooth submanifold of~$M$, it has the induced Riemannian volume and the Lebesgue measure~$\Leb_W$ coming from it.
Regularity of~$\nu_W$ with respect to $\Leb_W$ is a good indicator of how nicely the foliation box around the leaf~$W$ is immersed in~$M$.
We say a foliation is \emph{absolutely continuous} if for Lebesgue almost every $z \in M$ the conditional measure~$\nu_{W(z)}$ is absolutely continuous with respect to~$\Leb_{W(z)}$. There are other definitions of absolute continuous foliations, see for instance~\cite{Hirayama2007}, but they are beyond the scope of this paper.


\begin{thm}\label{thm:foliation-not-acim}
Let $F$ be a $C^2$ partially hyperbolic endomorphism of a compact smooth Riemannian manifold $M$. Assume that
\begin{enumerate}
\item\label{i:smooth-srb} $F$ has a physical measure~$\mu$;
\item\label{i:foliation-exists-bounded-volume} $F$ has a $C^1$ invariant foliation $\sW$ with leaves of uniformly bounded volume;
\item\label{i:1-1} for each leaf~$W \in \sW$ the restriction~$F_W \colon W \to F(W)$ is a 1-1 map;
\item\label{i:positive-exp} the leafwise volume Lyapunov exponent~$\chi_{\sW}$ w.r.t.~$\mu$ is strictly positive;
\end{enumerate}
Then the foliation $\sW$ is not absolutely continuous.
\end{thm}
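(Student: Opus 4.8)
The plan is to argue by contradiction: assume $\sW$ is absolutely continuous and derive that the leafwise Lyapunov exponent $\chi_{\sW}$ cannot be strictly positive, contradicting assumption~\ref{i:positive-exp}. The idea is that absolute continuity lets us relate the physical measure $\mu$ to the Lebesgue measures $\Leb_W$ along leaves, and a volume-growth obstruction then forbids sustained expansion. First I would use assumption~\ref{i:smooth-srb} together with \ref{i:foliation-exists-bounded-volume}: since $\mu$ is a physical measure it is absolutely continuous along unstable manifolds, but here the relevant structure is the central foliation, so instead I would disintegrate $\mu$ with respect to the partition into leaves of $\sW$. Under the absolute continuity hypothesis, the conditional measures $\mu_W$ of $\mu$ on $\mu$-a.e.\ leaf $W$ are absolutely continuous with respect to $\Leb_W$ (this needs a short argument: the global disintegration of $\mu$ inherits absolute continuity from the local picture in foliation boxes, because $\mu$, being physical, is a weak limit of averages of Lebesgue and hence its conditionals along $\sW$ dominate/are dominated appropriately — alternatively one invokes that a physical measure's basin has positive Lebesgue measure, and saturates).

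Next I would exploit the invariance of $\sW$ and $\mu$. Fix a foliation box and a leaf $W$; by \ref{i:1-1} the map $F_W\colon W\to F(W)$ is a diffeomorphism onto another (local) leaf, with Jacobian $\operatorname{Jac} F_W = |\det DF^n|_{TW}|$ for iterates. The Birkhoff average of $\log|\det DF|_{T\sW}|$ equals $\chi_{\sW}(z)$ for $\mu$-a.e.\ $z$ by the definition recalled in the excerpt, and by ergodic decomposition (using Lemma~\ref{lem:ergodicity}-type reasoning, or just Birkhoff on an ergodic component) this is a $\mu$-a.s.\ constant $\chi>0$ on a set of full measure. Now the key tension: the leaves have uniformly bounded volume $V$ (assumption~\ref{i:foliation-exists-bounded-volume}). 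If the conditional $\mu_W$ has density $\rho_W$ with respect to $\Leb_W$, then pushing $\mu_W$ forward by $F^n_W$ and using invariance of $\mu$, one finds that $\rho_W$ transforms by the leafwise Jacobian; iterating and integrating against $\Leb_W$, the total mass $1=\mu_W(W)$ is preserved, but the expansion $|\det DF^n|_{TW}|\to\infty$ (comparable to $e^{n\chi}$) forces the density $\rho_{F^n(W)}$ on the image leaf to concentrate, and the bounded-volume constraint on $F^n(W)$ produces a contradiction — more precisely, a Poincaré-recurrence / Kac argument along the leaf, or a direct estimate showing $\int_W \log|\det DF_W|\,d\mu_W \le 0$ because $\mu$ is invariant and leafwise volume is bounded (a leafwise analogue of the fact that an area-nonincreasing-on-average statement follows from finite total volume), contradicts $\chi>0$.

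Concretely, the cleanest route is: set $\varphi_n(z)=\log|\det DF^n|_{T_z\sW}|$; by Birkhoff $\frac1n\varphi_n\to\chi_{\sW}$ in $L^1(\mu)$, so $\int \varphi_n\,d\mu \sim n\chi$. On the other hand, using the disintegration $\mu=\int \mu_W\,d\hat\mu(W)$ with $\mu_W\ll\Leb_W$, invariance of $\mu$ under $F$, and the change-of-variables formula on leaves (legitimate by \ref{i:1-1} and $C^1$-ness of $\sW$), I would show $\int \varphi_n\,d\mu = \int \log\frac{d(F^n_*\mu_W)}{d\Leb_{F^n W}}\circ F^n \cdot(\dots)$ rearranges to a relative-entropy-type expression bounded in terms of $\log(\text{total leaf volume})\le \log V$, uniformly in $n$; hence $\int\varphi_n\,d\mu = O(1)$, contradicting $\sim n\chi$ with $\chi>0$.

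The main obstacle I anticipate is the passage from the \emph{local} absolute continuity in foliation boxes to a \emph{global} disintegration of $\mu$ along the (possibly topologically nontrivial, compact, circle-like) leaves with usable density transformation rules, and making rigorous the ``bounded leaf volume caps leafwise expansion on average'' step — in particular handling that $F$ is non-invertible globally (so one must work leaf-by-leaf where \ref{i:1-1} restores injectivity) and that the conditional densities need not be bounded below, only integrable. Controlling these densities uniformly enough to run the averaging estimate is where the real work lies; everything else is bookkeeping with Birkhoff's theorem and the change of variables formula.
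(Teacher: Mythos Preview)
Your contradiction strategy is sound, but the execution has a genuine gap at its core. You propose to disintegrate $\mu$ along the leaves of $\sW$ and assert that the conditionals $\mu_W$ are absolutely continuous with respect to $\Leb_W$. But the absolute continuity hypothesis in the paper concerns the disintegration of \emph{Lebesgue measure} along the foliation, not of $\mu$; and $\mu$, being merely physical, need not be absolutely continuous with respect to Lebesgue (an SRB measure can live on a set of zero volume). Your parenthetical sketches (``weak limit of averages of Lebesgue,'' ``basin has positive Lebesgue measure and saturates'') do not bridge this: weak convergence does not transmit leafwise absolute continuity, and saturation of the basin is a statement about Lebesgue, not about $\mu$. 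Without $\mu_W\ll\Leb_W$ the density-transformation and relative-entropy machinery you outline never gets off the ground. Separately, the claimed bound $\int\varphi_n\,d\mu = O(1)$ via a ``relative-entropy-type expression bounded by $\log V$'' is asserted rather than derived; making this precise in the non-invertible setting, where several leaves can map onto one, is not routine bookkeeping.

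The paper's argument avoids all of this by never disintegrating $\mu$. It uses the physical-measure hypothesis only to conclude that the set $\Lambda$ of Lyapunov-regular points has positive \emph{Lebesgue} measure. Absolute continuity of $\sW$ (for Lebesgue) then yields a positive-Lebesgue set of leaves $W$ with $\Leb_W(W\cap\Lambda)>0$. For any such leaf, the \emph{volume} of $F^n(W)$ is computed directly by change of variables as $\int_W |\Jac_\sW F^n|\,d\Leb_W$; restricting to $W\cap\Lambda$, applying Jensen's inequality to $e^x$, and using $\frac1n\log|\Jac_\sW F^n|\to\chi_\sW>0$ pointwise on $\Lambda$ (with Fatou to pass to the limit) forces $\vol F^n(W)\to\infty$, contradicting the uniform volume bound. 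This is both shorter and sidesteps the regularity issues you flag as the ``main obstacle.'' I would recommend reworking your argument along these lines: work with leafwise Lebesgue measure throughout and use $\mu$ only to locate regular points.
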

\begin{proof}
Assume that the foliation~$\sW$ is absolutely continuous.
Let~$\Lambda$ be the set of Lyapunov regular points for~$\mu$.
Because~$\mu$ is physical,
we have $\Leb(\Lambda) > 0$. Thus there exists a set~$A$, $\Leb(A) > 0$, such that for every~$z \in A$ we have~$\Leb_z (W(z) \cap \Lambda) > 0$.

Fix any $z \in A$ and denote by $\Jac_\sW F^n$  the determinant of the restriction of $DF^n$ to $E^c$. Then for any~$n \ge 0$ for the volume of the leaf~$W(F^{n}(z))$, remembering assumption~\ref{i:1-1}, we can write
\[
\begin{split}
\vol W(F^n(z)) = &\int\limits_{W(F^n(z))} \, dm_{F^n(z)} =  \int\limits_{W(z)} \left| \Jac_\sW F^n \right| \, dm_z \ge
\int\limits_{W(z) \cap \Lambda} \left| \Jac_\sW F^n \right| \, dm_z\\
&= \int\limits_{W(z) \cap \Lambda} e^{n \cdot \frac1n \cdot \log | \Jac_\sW F^n |} \, dm_z .
\end{split}
\]
Then, by Jensen's inequality for~$e^x$,
\[
\vol W(F^n(z)) \ge e^{n \cdot \int\limits_{W(z) \cap \Lambda} \frac1n \log | \Jac_\sW F^n | \, dm_z}.
\]
Note that for every~$z' \in {W(z) \cap \Lambda}$ we have $\frac1n \log | \Jac_\sW F^n (z') | \to \chi_{\sW}$.
Thus by Fatou's lemma
$$
\lim_{n\to\infty} \int\limits_{W(z) \cap \Lambda} \frac1n \log |\Jac_\sW F^n| \, dm_z \ge \int\limits_{W(z) \cap \Lambda} \chi_{\sW} \, dm_z.
$$
By assumption~\ref{i:foliation-exists-bounded-volume}, the volume of the leaves is uniformly bounded, i.e, there exists~$C \in \bR$ such that for any~$z \in M$ we have $C \ge \vol W(z)$. Thus we can write
$$
C \ge
\lim_{n\to\infty} \vol W(F^n(z)) \ge
\lim_{n\to\infty} e^{n \cdot \int\limits_{W(z) \cap \Lambda} \chi_{\sW} \, dm_z} \ge \lim_{n\to\infty} e^{n \cdot \chi_{\sW} \cdot m_z ({W(z) \cap \Lambda})} =
+\infty,
$$
because $\chi_{\sW} > 0$ by assumption~\ref{i:positive-exp} of the theorem. This contradiction proves the theorem.
\end{proof}

\begin{proof}[{\bf Proof of Theorem  \ref{thm:foli-main}}]
The idea is to apply Theorem~\ref{thm:foliation-not-acim}, with~$\sW = \sW^c_{F}$, to the maps~$F \in \cU_{\ve}$.
We have thus to check the hypotheses of Theorem~\ref{thm:foliation-not-acim}.
The existence of a central foliation satisfying \ref{i:foliation-exists-bounded-volume} is established in Subsection~\ref{ss:bounded-volume}. We obviously have~\ref{i:1-1} for~$F_0$ because of the special structure of~$\sW^c_{F_0}$, recall Subsection~\ref{ss:bounded-volume}. Then  that for any~$F\in\cU_\ve$, $\ve \le \ve_0$, and~$W \in \sW^c_{F}$ we know that~$F|_W \colon W \to F(W)$ is a local diffeomorphism (thus, a covering) and, by Proposition \ref{prop:strut-stab}, is isotopic to a map, topologically conjugated to some~$F_0|_{W'}$ which is 1-1, where~$W' \in \sW^c_{F_0}$. Thus $F|_W$ is itself 1-1. Finally, \ref{i:positive-exp} follows from Proposition \ref{prop:lyap-positive}.
\end{proof}
Let us conclude the paper stating few interesting related facts.
\begin{cor}
Suppose a $C^2$ partially hyperbolic endomorphism $F \colon M_1 \times M_2 \to M_1 \times M_2$ is a skew product
$$
F(x,\theta) = (f(x), \theta + \ve \omega(x,\theta))
$$
and has an absolutely continuous ergodic invariant measure~$\mu$. Then its central Lyapunov exponent with respect to~$\mu$ is non-positive.
\end{cor}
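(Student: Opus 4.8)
The plan is to argue by contradiction, invoking Theorem~\ref{thm:foliation-not-acim} with $\sW$ taken to be the vertical product foliation. Assume $\chi_c(\mu)>0$ (if $\dim M_2\ge 2$, read $\chi_c$ as the sum of the central exponents, i.e.\ the leafwise volume exponent). First I would check that $\mu$ is in fact \emph{physical}: since $\mu$ is ergodic, Birkhoff's theorem gives $\mu(B(\mu))=1$ for the basin $B(\mu)$ of $\mu$, and since $\mu\ll\Leb$ this forces $\Leb(B(\mu))>0$ (otherwise $\mu(B(\mu))=0$). Hence hypothesis~\ref{i:smooth-srb} of Theorem~\ref{thm:foliation-not-acim} holds.

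Next I would identify the central foliation. Because $f$ does not depend on $\theta$, the first component of $F^n$ is $(x,\theta)\mapsto f^n(x)$, so $DF^n_z$ maps the vertical line $\{0\}\oplus T_\theta M_2$ to the vertical line at $F^n(z)$, and thus the vertical line field is $DF$-invariant. As it lies in the central cone of $F$ constructed in \cite[Section 3]{DeL2}, uniqueness of the dominated splitting forces it to equal $E^c$; equivalently, in \eqref{eq:central-d} one has $\partial_\theta f\equiv 0$, whence $\stable_n\equiv 0$ for all $n$ and $\hat\stable\equiv 0$. Therefore the product partition $\sW=\{\{x\}\times M_2\}_{x\in M_1}$ is a $C^\infty$ foliation tangent to $E^c$, it is $F$-invariant since $F(\{x\}\times M_2)=\{f(x)\}\times M_2$, its leaves are homeomorphic to $M_2$ and have constant volume $\vol(M_2)$ (consistently with Theorem~\ref{t:bounded-volume}), and its leafwise volume Lyapunov exponent with respect to $\mu$ equals $\chi_c$ by the discussion in Section~\ref{sec:central-fol}. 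This supplies hypotheses~\ref{i:foliation-exists-bounded-volume} and~\ref{i:positive-exp}. For~\ref{i:1-1}, I would observe that on each leaf $\{x\}\times M_2$ the map $F$ acts by $\theta\mapsto\theta+\ve\omega(x,\theta)$, a $C^1$ self-map of $M_2$ which for $\ve$ small is $C^1$-close to the identity, hence a diffeomorphism and in particular $1$-$1$.

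With all four hypotheses in place, Theorem~\ref{thm:foliation-not-acim} yields that $\sW$ is not absolutely continuous. But $\sW$ is the trivial product foliation of $M_1\times M_2$ into vertical fibers: in any foliation box the Rokhlin disintegration of Lebesgue is the product of a measure on $M_1$ with the Riemannian volume on the fibers, so every conditional measure is proportional to $\Leb_W$ and $\sW$ \emph{is} absolutely continuous. This contradiction proves $\chi_c(\mu)\le 0$. The only step requiring any care is the injectivity of $F$ along leaves (hypothesis~\ref{i:1-1}), which rests on the skew component $\theta\mapsto\theta+\ve\omega(x,\theta)$ being close enough to the identity to be a diffeomorphism of $M_2$; everything else is a direct matching of the remaining hypotheses of Theorem~\ref{thm:foliation-not-acim} against the vertical foliation.
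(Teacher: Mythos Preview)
Your proposal is correct and follows the same route as the paper: argue by contradiction, apply Theorem~\ref{thm:foliation-not-acim} to the vertical product foliation $\{\{x\}\times M_2\}$, and derive a contradiction from the obvious absolute continuity of this trivial foliation. The paper's proof is terser---it simply asserts that hypotheses~\ref{i:smooth-srb}--\ref{i:1-1} ``follow from the skew product structure'' and that the vertical foliation is ``obviously absolutely continuous''---whereas you supply the details the paper omits: the passage from ``absolutely continuous ergodic'' to ``physical'' via Birkhoff, the identification of $E^c$ with the vertical direction via $\partial_\theta f\equiv 0$, and the smallness of $\ve$ needed for leafwise injectivity.
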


\begin{proof}
Assumptions~\ref{i:smooth-srb}--\ref{i:1-1} follow from the skew product structure. The central foliation in this case is the collection of all~$\{x\} \times M_2$, $x \in M_1$, which is obviously absolutely continuous. But if we assume~$\chi_c > 0$ this would imply that central foliation is not absolutely continuous. Thus~$\chi_c \le 0$.
\end{proof}

This fits well within general knowledge in the area. In different settings, it known~\cite{Kleptsyn2014a} that a generic partially hyperbolic skew product with a non-invertible base dynamics has negative central volume Lyapunov exponent, which can only become zero in some degenerate cases but never above zero. Kleptsyn, Nalskii~\cite{KN} used a similar approach to prove that a generic random dynamical systems on the circle contracts the orbits. Both results are based on the fundamental Baxendale's~\cite{Bax} theorem for stochastic flows.

\begin{rem}
Note that the situation is different for diffeomorphisms, see~\cite{Hirayama2007}. In that setting it is sufficient to ask~$\chi_c \ne 0$ instead of~$\chi_c > 0$ to prove that the central foliation is not absolutely continuous.
\end{rem}

A final comment on the case $\chi_c = 0$. Consider on the one hand a rigid rotation skew product
$$
F_\ve (x, \theta) = (f(x), \theta+\ve\omega(x))
$$
which has the vertical circles as the absolutely continuous central foliation.
On the other hand a system of the form
$$
F (x, \theta) = (f(x, \theta), \theta)
$$
with a generic~$f(x, \theta)$, $\partial_x f > \lambda > 1$, which has a non-absolutely continuous central foliation (we prove it shortly in Subsection~\ref{ss:eps-0}). Thus, both possibilities can happen. Clearly, there is the need for further investigation if we want to understand the absolutely continuity of the foliation in this case.

\subsection{Non-absolute continuity for $\ve = 0$}	\label{ss:eps-0}

For the special case $\ve = 0$ the central Lyapunov exponent~$\chi_c = 0$ and thus the Theorems~\ref{thm:foli-main} and~\ref{thm:foliation-not-acim} do not apply. However, recall the classic result by Shub, Sullivan~\cite{Shub1985}:
\begin{thm} Let $2 \le r \le \omega$. If two orientation preserving expanding $C^r$ endomorphisms $f$ and $g$ of $\bT^1$ are absolutely continuously conjugate, then they are conjugate by a $C^r$ diffeomorphism.
\end{thm}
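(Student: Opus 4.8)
The plan is to deduce the smoothness of the conjugacy from the smoothness of the absolutely continuous invariant densities. Let $h\colon\bT\to\bT$ be the orientation-preserving conjugating homeomorphism, $h\circ f=g\circ h$; by hypothesis $h$ is absolutely continuous (the statement is symmetric in $f$ and $g$, so it is immaterial whether one assumes this for $h$ or for $h^{-1}$). Recall the classical fact that a $\cC^r$ orientation-preserving expanding map of $\bT$, $r\ge 2$, has a unique invariant probability measure $\mu_f=\rho_f\,\Leb$ absolutely continuous with respect to Lebesgue, whose density $\rho_f$ is strictly positive and of class $\cC^{r-1}$---this follows from the spectral theory of the transfer operator (Krzy\.{z}ewski--Szlenk), and $\rho_f$ is real-analytic when $r=\omega$ (Ruelle). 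Write $\mu_g=\rho_g\,\Leb$ similarly; both densities are bounded away from $0$ and $\infty$, being positive and continuous on the compact circle.

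First I would show $(h^{-1})_*\mu_g=\mu_f$ (equivalently $h_*\mu_f=\mu_g$). Since $h$ is absolutely continuous it has Luzin's property N, so for a Lebesgue-null set $N$ one has $\bigl((h^{-1})_*\mu_g\bigr)(N)=\mu_g(h(N))=0$ because $\mu_g\ll\Leb$ and $\Leb(h(N))=0$; hence $(h^{-1})_*\mu_g\ll\Leb$. As $h^{-1}$ conjugates $g$ to $f$, this measure is $f$-invariant, so by uniqueness of the absolutely continuous invariant probability of $f$ it equals $\mu_f$. Testing this against an arbitrary continuous $\phi$ and applying the change-of-variables formula for the absolutely continuous increasing homeomorphism $h$ (to the bounded Borel function $y\mapsto\phi(h^{-1}(y))\rho_g(y)$) gives
\[
\int_0^1\phi(x)\rho_f(x)\,dx=\int_0^1\phi(x)\,\rho_g(h(x))\,h'(x)\,dx ,
\]
whence $h'(x)=\rho_f(x)/\rho_g(h(x))$ for a.e.\ $x$. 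The right-hand side is a \emph{continuous} positive function: it is $\rho_f$ divided by the composition of the continuous density $\rho_g$ with the homeomorphism $h$, and $\rho_g\circ h\ge\min\rho_g>0$. Since $h$ is absolutely continuous, $h(x)=h(0)+\int_0^x h'(t)\,dt=h(0)+\int_0^x\rho_f(t)/\rho_g(h(t))\,dt$ with continuous integrand, so $h\in\cC^1$ and $h'>0$.

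I would then finish by a regularity bootstrap. If $h\in\cC^j$ for some $1\le j\le r-1$, then $\rho_g\circ h\in\cC^{\min(r-1,j)}=\cC^j$ and, the denominator being bounded away from $0$, $h'=\rho_f/(\rho_g\circ h)\in\cC^j$, i.e.\ $h\in\cC^{j+1}$; iterating from $j=1$ to $j=r-1$ gives $h\in\cC^r$, which settles the case $r<\omega$. For $r=\omega$ the same bootstrap first yields $h\in\cC^\infty$, and one upgrades this to $\cC^\omega$ by the usual complexification of analytic expanding dynamics: the inverse branches of $f$ and $g$ extend to holomorphic uniform contractions of a complex annular neighbourhood of $\bT$ in $\bC/\bZ$, and $h$ is obtained locally as a uniform limit of compositions of the holomorphic inverse branches of $g$ along the corresponding itineraries, hence extends holomorphically; so $h$ is real-analytic.

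The existence and $\cC^{r-1}$ (resp.\ analytic) regularity of the invariant densities, together with the change-of-variables bookkeeping of the second step, are routine. The one genuinely delicate point is the analytic step $\cC^\infty\Rightarrow\cC^\omega$, where the complexification must be arranged so that the \emph{expanding} maps $f,g$ still produce contractions---which is why one works throughout with their contracting inverse branches rather than with $f,g$ directly. (An alternative to the second step would be to observe that $\log h'$ solves, a.e., the cohomological equation $\log h'\circ f-\log h'=\log g'\circ h-\log f'$ over $f$ with H\"older right-hand side, and to invoke a measurable Liv\v{s}ic theorem to conclude that $\log h'$ agrees a.e.\ with a continuous function; the argument through the smooth density is shorter and avoids this.)
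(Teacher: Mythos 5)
The paper itself offers no proof of this statement: it is quoted verbatim from Shub--Sullivan \cite{Shub1985}, so the comparison must be with the classical argument. Your main line is exactly that classical route and is correct for $2\le r<\omega$ (and $r=\infty$): uniqueness and $\cC^{r-1}$ regularity and positivity of the a.c.\ invariant densities $\rho_f,\rho_g$; Luzin's property N of the absolutely continuous $h$ to get $(h^{-1})_*\mu_g\ll\Leb$, hence $=\mu_f$ by uniqueness (you were rightly careful that it is property N of $h$, not of $h^{-1}$, that is needed); the change of variables giving $h'=\rho_f/(\rho_g\circ h)$ a.e., the integral representation of the absolutely continuous $h$ to upgrade this to $h\in\cC^1$ with $h'>0$; and the bootstrap to $\cC^r$.

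The one genuine gap is the $r=\omega$ endgame, and you flagged the right spot. As written, ``$h$ is a uniform limit of compositions of the holomorphic inverse branches of $g$ along the corresponding itineraries'' does not yield holomorphic approximants: with a fixed target point such a composition is constant on each itinerary cylinder, hence locally constant in $x$; if instead you feed it $h(f^n(x))$ you have reintroduced the unknown $h$; and the natural complexification of $g^{-n}\circ h_0\circ f^n$ only lives on a strip of width $\sim\lambda^{-n}$, since $f^n$ expands the imaginary direction, so no fixed annulus survives the limit. The gap closes at once from what you already proved: $h$ is a $\cC^1$ solution of the ODE $y'=\rho_f(x)/\rho_g(y)$, whose right-hand side is real-analytic and positive when $r=\omega$ (Ruelle's analyticity of the densities), so $h$ is real-analytic by the analytic existence/uniqueness theorem for ODEs. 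Even simpler, and closer to Shub and Sullivan's own finish: from $h_*\mu_f=\mu_g$ and monotonicity one gets $h=\Phi_g^{-1}\circ(\Phi_f+c)$, where $\Phi_f(x)=\mu_f([0,x])$ and $\Phi_g$ are the distribution functions; these are $\cC^r$ (resp.\ real-analytic) diffeomorphisms because their derivatives are the positive $\cC^{r-1}$ (resp.\ analytic) densities, so $h\in\cC^r$ for every $2\le r\le\omega$ in one stroke, with neither bootstrap nor complexification.
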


In particular, the multipliers of all the according periodic points of $f$ and $g$ must be the same. This is a degeneracy of codimension infinity. In the concrete family~\eqref{eq:expanding-contracting} the multiplier of the fixed point~$(0, \theta)$ non-trivially changes with~$\theta$.
Thus, for a generic~$F_0$ the conjugacy~$h(x,\theta)$ is not absolutely continuous in~$x$. This implies that

\begin{prop} The map $F_0$ generically has a non absolutely continuous central foliation~$\sW^c_0$.
\end{prop}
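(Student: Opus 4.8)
The plan is to make~$\sW^c_0$ explicit, pass to coordinates in which it becomes the trivial vertical foliation, and then play the Shub--Sullivan rigidity theorem recalled above against the ergodicity of expanding circle maps. First I would recall, from the $\ve=0$ case in the proof of Theorem~\ref{t:bounded-volume}, that~$\sW^c_0$ is the foliation of~$\bT^2$ whose leaves are the graphs $L_x=\{(h(x,\theta),\theta):\theta\in\bT^1\}$, where $h(\cdot,\theta)$ is the orientation-preserving conjugacy between $f(\cdot,0)$ and $f(\cdot,\theta)$ normalised by $h(\cdot,0)=\mathrm{id}$; by the remark following Theorem~\ref{t:bounded-volume}, $h$ is $C^r$ in~$\theta$ (with $r\ge2$, which is what makes Shub--Sullivan applicable), so each~$L_x$ is a $C^r$ circle. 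Writing~$\mu_\theta$ for the SRB measure of~$f(\cdot,\theta)$, the measure $m=\int_{\bT^1}\mu_\theta\otimes\delta_\theta\,d\theta$ on~$\bT^2$ is $F_0$-invariant and equivalent to~$\Leb$. I would then work through the homeomorphism $H\colon\bT^1\times\bT^1\to\bT^2$, $H(x,\theta)=(h(x,\theta),\theta)$, which carries the trivial vertical foliation onto~$\sW^c_0$, conjugates $f(\cdot,0)\times\mathrm{id}$ to~$F_0$, restricts on each vertical circle to a $C^r$ diffeomorphism with derivative bounded away from~$0$ and~$\infty$, and satisfies $H^*m=\int_{\bT^1}\tau_\theta\otimes\delta_\theta\,d\theta$ with $\tau_\theta:=(h(\cdot,\theta)^{-1})_*\mu_\theta$ an ergodic $f(\cdot,0)$-invariant probability on~$\bT^1$.

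Since $m\sim\Leb$ and~$H$ is a leafwise $C^r$-diffeomorphism, $\sW^c_0$ is absolutely continuous if and only if the disintegration of $\widehat m:=H^*m=\int_{\bT^1}\tau_\theta\otimes\delta_\theta\,d\theta$ along the vertical circles $\{x\}\times\bT^1$ has conditionals absolutely continuous with respect to~$d\theta$. Unwinding this through the two mutually transverse partitions of $\bT^1\times\bT^1$ (into the vertical circles, and into the horizontal circles $\bT^1\times\{\theta\}$, on which the conditionals of~$\widehat m$ are exactly the~$\tau_\theta$), a ``Fubini for disintegrations'' computation would give: if~$\sW^c_0$ is absolutely continuous then $\tau_\theta\ll\bar\tau$ for Lebesgue-a.e.~$\theta$, where $\bar\tau:=\int_{\bT^1}\tau_{\theta'}\,d\theta'$ is the $x$-marginal of~$\widehat m$, and moreover there is a jointly measurable $g\ge0$ with $d\tau_\theta/d\bar\tau=g_\cdot(\theta)$ for a.e.~$\theta$ and $\int_{\bT^1}g_x(\theta)\,d\theta=1$ for $\bar\tau$-a.e.~$x$. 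The hard part will be this reduction: nothing in it is deep, but the simultaneous use of the two transverse disintegrations and the measurable selection of Radon--Nikodym densities has to be set up with some care.

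Granting that, it remains to check that, for generic~$F_0$, the measures~$\tau_\theta$ are pairwise distinct off a Lebesgue-null set of pairs~$(\theta,\theta')$. Indeed, if $\tau_\theta=\tau_{\theta'}$ then $(h(\cdot,\theta)\circ h(\cdot,\theta')^{-1})_*\mu_{\theta'}=\mu_\theta$, and since $\mu_\theta,\mu_{\theta'}$ are equivalent to Lebesgue with strictly positive densities the topological conjugacy $h(\cdot,\theta)\circ h(\cdot,\theta')^{-1}$ between $f(\cdot,\theta')$ and $f(\cdot,\theta)$ sends null sets to null sets in both directions; by the Shub--Sullivan theorem above it is then a $C^r$ diffeomorphism, so $f(\cdot,\theta)$ and $f(\cdot,\theta')$ have equal multipliers at every pair of corresponding periodic orbits. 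Such isospectrality on a positive-measure set of pairs is a degeneracy of infinite codimension — already the multiplier at the continuation of a fixed point is a single non-constant function of~$\theta$ for generic~$F_0$, as one checks explicitly for the family~\eqref{eq:expanding-contracting} at the fixed point $(0,\theta)$ — so it fails generically. Finally, distinct ergodic $f(\cdot,0)$-invariant probabilities are mutually singular, so for generic~$F_0$ the densities~$g_\cdot(\theta)$ and~$g_\cdot(\theta')$ have $\bar\tau$-a.e.\ disjoint supports for a.e.~$(\theta,\theta')$; integrating $g_x(\theta)g_x(\theta')$ in all three variables then yields $\int_{\bT^1}\big(\int_{\bT^1}g_x(\theta)\,d\theta\big)^2\,d\bar\tau(x)=0$, i.e.\ $1=0$, a contradiction. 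Hence~$\sW^c_0$ is not absolutely continuous for generic~$F_0$.
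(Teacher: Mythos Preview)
Your argument is correct, and it shares the paper's core ingredient: Shub--Sullivan rigidity plus the generic nontriviality of the periodic multiplier data in~$\theta$. The difference lies in how each argument bridges from that input to the actual non-absolute-continuity of~$\sW^c_0$.

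The paper's proof is extremely brief. It simply observes that the holonomy of~$\sW^c_0$ between the transversals $\bT^1\times\{0\}$ and $\bT^1\times\{\theta\}$ is exactly $h(\cdot,\theta)$, invokes Shub--Sullivan to say that generically this holonomy is not absolutely continuous in~$x$, and then asserts that this ``implies'' the foliation is not absolutely continuous in the leaf-conditional sense. The passage from non-a.c.\ holonomy to non-a.c.\ leaf conditionals is left unjustified; it is a standard fact in good settings, but not one the paper states or cites.

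Your route avoids that unproved implication entirely. You work instead with the $F_0$-invariant measure $m=\int\mu_\theta\otimes\delta_\theta\,d\theta\sim\Leb$, pull it back through~$H$ to $\widehat m=\int\tau_\theta\otimes\delta_\theta\,d\theta$, and reduce the question to whether the vertical disintegration of~$\widehat m$ has conditionals absolutely continuous with respect to~$d\theta$. Your ``Fubini for disintegrations'' step then forces $d\tau_\theta/d\bar\tau=g_\cdot(\theta)$, and the mutual singularity of distinct ergodic $f(\cdot,0)$-invariant measures (which you obtain from Shub--Sullivan in the same spirit as the paper) produces the clean $1=0$ contradiction. This is more work, but it is self-contained and does not lean on any unstated holonomy-vs.-conditional equivalence. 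In short: same engine, but you supply the transmission that the paper omits.
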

Such type of results go back at least to Katok, see \cite{milnor} for a discussion of the piecewise linear case.

\appendix


\end{document}